\newcommand{\R}{\mathbb{R}}
\newcommand{\N}{\mathbb{N}}
\newcommand{\E}[2]{\mathbb{E}_{#1}\left[#2\right]}
\newcommand{\of}[1]{\left(#1\right)}
\newcommand{\off}[1]{\left[#1\right]}
\DeclareMathOperator{\Id}{\mathrm{Id}}
\newcommand{\Was}[1]{\mathbb{W}_{#1}}
\newtheorem{corollary}[]{Corollary}
\newcommand{\cN}{\mathcal{N}}
\DeclareMathOperator*{\argmin}{argmin}
\newcommand{\cF}{\mathcal{F}}
\newcommand{\cP}{\mathcal{P}}
\newcommand{\cU}{\mathcal{U}}
\newcommand{\cO}{\mathcal{O}}
\newcommand{\Ent}{\mathrm{Ent}}
\newcommand{\kl}[2]{H\left(#1 | #2\right)}
\newcommand{\Tr}{\text{Tr}}
\newcommand{\Tan}[1]{\text{Tan}(#1)}
\newcommand{\Schro}{\text{Schr\"{o}dinger}}
\newcommand{\Exp}[1]{\mathrm{E}_{#1}}
\newcommand{\ac}{\cP_2^{\text{ac}}}
\newcommand{\WasDiff}{\nabla_{\mathbb{W}}}
\newcommand{\eps}{\varepsilon}
\newtheorem{lemma}{Lemma}
\newtheorem{remark}{Remark}
\newtheorem{definition}{Definition}
\newtheorem{theorem}{Theorem}
\newtheorem{assumption}{Assumption}
\newcommand{\SB}[3]{\text{SB}_{#2}^{#3}(#1)}
\newcommand{\GA}[3]{\text{GA}_{#2}^{#3}(#1)}
\newcommand{\LD}[3]{\text{LD}_{#2}^{#3}(#1)}
\newcommand{\SBstatic}[2]{\pi_{#1,#2}}
\newcommand{\SBdynam}[2]{S_{#1,#2}}
\newcommand{\LDstatic}[2]{\ell_{#1,#2}}
\newcommand{\BMstatic}[2]{\mu_{#1,#2}}
\newcommand{\BPbase}[2]{\mathcal{B}_{#1,#2}}
\newcommand{\LBPbase}[2]{\mathcal{L}_{#1,#2}}
\newcommand{\opt}[3]{S_{#1}^{#2}\of{#3}}
\newcommand{\commentout}[1]{}
\title[Iterated Schr\"odinger bridge]{Iterated Schr\"odinger bridge approximation\\to Wasserstein Gradient Flows}
\author{Medha Agarwal}
\address{Medha Agarwal\\ Department of Statistics \\ University of Washington\\ Seattle WA 98195, USA\\ {Email: medhaaga@uw.edu}}
\author{Zaid Harchaoui}
\address{Zaid Harchaoui\\ Department of Statistics \\ University of Washington\\ Seattle WA 98195, USA\\ {Email: zaid@uw.edu}}
\author{Garrett Mulcahy}
\address{Garrett Mulcahy\\ Department of Mathematics \\ University of Washington\\ Seattle WA 98195, USA\\ {Email: gmulcahy@uw.edu}}
\author{Soumik Pal}
\address{Soumik Pal\\ Department of Mathematics \\ University of Washington\\ Seattle WA 98195, USA\\ {Email: soumik@uw.edu}}
\keywords{Schr\"odinger bridges, Wasserstein gradient flows, JKO approximation, Fisher information, Sinkformers, Transformers, self-attention}
\subjclass[2000]{49N99, 49Q22, 60J60}
\thanks{This research is partially supported by the following grants. All the authors are supported by NSF grant DMS-2134012. Additionally, Pal is supported by NSF grant DMS-2133244; Harchaoui is supported by NSF grant CCF-2019844, DMS-2023166, DMS-2133244. Mulcahy is supported by the National Science Foundation Graduate
Research Fellowship Program under Grant No.\ DGE-2140004. Thanks to PIMS Kantorovich Initiative for facilitating this collaboration supported through a PIMS PRN and the NSF Infrastructure grant DMS 2133244. The authors are listed in alphabetical order.}
\date{\today}
\begin{document}

\begin{abstract}
We introduce a novel discretization scheme for Wasserstein gradient flows that involves successively computing Schr\"{o}dinger bridges with the same marginals. This is different from both the forward/geodesic approximation and the backward/Jordan-Kinderlehrer-Otto (JKO) approximations. The proposed scheme has two advantages: one, it avoids the use of the score function, and, two, it is amenable to particle-based approximations using the Sinkhorn algorithm.
Our proof hinges upon showing that relative entropy between the Schr\"{o}dinger bridge with the same marginals at temperature $\eps$ and the joint distribution of a stationary Langevin diffusion at times zero and $\eps$ is of the order $o(\eps^2)$ with an explicit dependence given by Fisher information.
Owing to this inequality, we can show, using a triangular approximation argument, that the interpolated iterated application of the Schr\"{o}dinger bridge approximation converge to the Wasserstein gradient flow, for a class of gradient flows, including the heat flow. The results also provide a probabilistic and rigorous framework for the convergence of the self-attention mechanisms in transformer networks to the solutions of heat flows, first observed in the inspiring work~\cite{sander_22} in machine learning research.   
\end{abstract}

\maketitle

\section{Introduction} \label{sec:introduction}
Let $\cP_2(\R^d)$ denote the space of square-integrable Borel probability measures on $\R^d$. Throughout this paper we will let $\rho \in \cP_2(\mathbb{R}^{d})$ refer to both the measure and the its Lebesgue density, whenever it exists. $\cP_2(\R^d)$ can be turned into a metric space equipped with the Wasserstein-$2$ metric \cite[Chapter 7]{ambrosio2005gradient}. We will refer to this metric space as simply the Wasserstein space. For any pair of probability measures $(\rho_1, \rho_2)$ in the Wasserstein space, let $\Pi(\rho_1, \rho_2)$ denote the set of couplings, i.e., joint distributions, with marginals $(\rho_1, \rho_2)$. Further, define the relative entropy, also known as Kullback-Leibler (KL) divergence, between $\rho_1$ and $\rho_2$ as
\begin{align}
    H(\rho_1|\rho_2) := \begin{cases}
        \int \log\left(\frac{d\rho_1}{d\rho_2}\right) d\rho_1 &\text{if $\rho_1 << \rho_2$} \\
        +\infty &\text{otherwise}
    \end{cases}
\end{align}


To understand our results let us start with an example that was first described in \cite{sander_22}.  
The static Schr\"{o}dinger bridge (\cite{schroLeonard13}) at temperature $\eps$ with both marginals $\rho$ is the solution to the following optimization problem
\begin{align}\label{eq:schrodinger}
    \SBstatic{\rho}{\eps}  &:= \argmin_{\pi \in \Pi(\rho,\rho)} H(\pi|\BMstatic{\rho}{\eps}),
\end{align}
where $\BMstatic{\rho}{\eps} \in \cP(\mathbb{R}^{d} \times \mathbb{R}^{d})$ has density
\begin{align}\label{eq:base-for-SB}
\BMstatic{\rho}{\eps}(x,y) := \rho(x)\frac{1}{(2\pi\eps)^{d/2}}\exp\left(-\frac{1}{2\eps}\|x-y\|^{2}\right).    
\end{align}
If $(X,Y) \sim \SBstatic{\rho}{\eps}{}$, define the \textbf{barycentric projection} \cite{pooladian2022entropic} as a function from $\R^d$ to itself given by 
\begin{align}\label{eq:bary-proj}
    \BPbase{\rho}{\eps}(x) &= \Exp{\SBstatic{\rho}{\eps}{}}[Y|X=x].
\end{align}
Now define a map from $\cP_2(\R^d)$ to itself given by the following pushforward
\begin{equation}\label{eq:SBheatflow}
\SB{\rho}{\eps}{}:= \of{2\Id - \BPbase{\rho}{\eps}}_\#\rho.
\end{equation}
We call this map $\SB{\cdot}{\eps}{}$ to emphasize the central role of the $\Schro$ bridge in its definition. That is, if $(X,Y) \sim \SBstatic{\rho}{\eps}$, then $\SB{\rho}{\eps}{}$ is the law of the random variable $2X - \Exp{\SBstatic{\rho}{\eps}{}}[Y|X]$. If $\eps \approx 0$, then $\SBstatic{\rho}{\eps}{}$ should closely approximate the quadratic cost optimal transport plan from $\rho$ to itself \cite{leo-sb-to-kp12}. That is, one would expect $\Exp{\SBstatic{\rho}{\eps}{}}[Y|X]$ to be approximately close to $X$ itself (see, for instance, \cite[Corollary 1]{pooladian2022entropic}). Therefore, one expects $2X - \Exp{\SBstatic{\rho}{\eps}{}}[Y|X] \approx X$, as well. Hence, the pushforward should not alter $\rho$ by much.


What happens if we iterate this map and rescale time with $\eps$? That is, fix some $\rho_0 \in \cP_2(\R^d)$ and a $T >0$. Let $N_\eps = \lfloor T \eps^{-1}\rfloor$. With $\rho_0^\eps:=\rho_0$, define iteratively
\[
\rho_k^\eps:=\SB{\rho_{k-1}^\eps}{\eps}{}, \quad k\in [N_\eps]:=\{1,2,\ldots, N_\eps\}.
\]
Define a continuous time piecewise constant interpolant $\left( \rho^\eps(t)= \rho^\eps_{\lfloor t/\eps \rfloor},\; t \in [0,T] \right)$. The question is whether, as $\eps\rightarrow 0+$, this curve on the Wasserstein space has an absolutely continuous limit? If so, can we write down its characterizing continuity equation?  
 
Before we give the answer, consider the simulation in Figure \ref{fig:gaussian_mix_eps0.1}. 
The figure considers a particle system approximation to the above iterative scheme. We push $n=500$ particles, initially distributed as a mixtures of two normal distributions, $\rho_0 := 0.5 \,\cN(-2, 1) + 0.5\,\cN(2, 1)$, via the dynamical system for $T=5$ and step size $\eps = 0.1$. At every step the Schr\"odinger bridge is approximated by the (matrix) Sinkhorn algorithm \cite{cuturi2013sinkhorn} applied to the current empirical distribution of the particles and the corresponding barycenter is approximated from this estimate. The pushforward is then applied to the $n$ particles themselves. This gives a time evolving process of empirical distributions that can be thought of as a discrete approximation of our iterative scheme. One can see that the histogram gets smoothed out gradually and ends in a shape that appears to be a Gaussian distribution. In fact, this process of smoothing is reminiscent of the heat flow starting from $\rho_0$. 
In the case of this simple starting distribution, the heat flow $(\rho_t, t\geq 0)$ is known analytically. The curves of the heat flow are superimposed as continuous curves. The broken curves are the exact $\rho^\eps(\cdot)$ which, in this case, can also be computed. The histograms and the two curves all closely match with one another.



\begin{figure}
    \centering
    \includegraphics[width=0.99\linewidth]{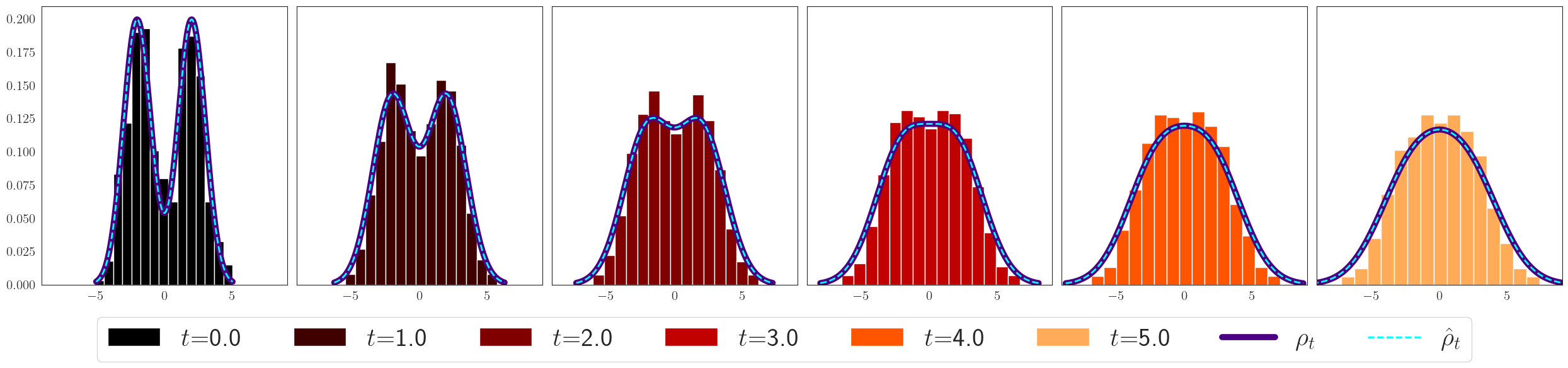}
    \caption{Histograms of $n=500$ particles at increments of one time unit. 
    Here $(\rho_t, t\in[0,T])$ is the true gradient flow and $(\hat \rho_t, t\in[0,T])$ is the piecewise constant interpolation of the analytically known scheme $(\rho_k^\eps, k\in[N_{\eps}])$.}
    \label{fig:gaussian_mix_eps0.1}
\end{figure}

 One of the results that we prove in this paper shows that, under appropriate assumptions, $\left( \rho^\eps(t),\; t \in [0,T] \right)$ indeed converges to the solution of the heat equation $\frac{\partial}{\partial t} \rho_t = \frac{1}{2}\Delta \rho_t$, with initial condition $\rho_0$. In a different context and language, this result was first observed in \cite[Theorem 1]{sander_22}. However, their proof is incomplete. See the discussion below. The statement of Theorem \ref{corollary:entropy-with-sb} in this paper lays down sufficient conditions under which this convergence happens. 

 It is well known from \cite[Theorem 5.1]{jko98} that the heat flow is the Wasserstein gradient flow of (one half) entropy. We prove a generalization of the above result where a similar discrete dynamical system converges, under assumptions, to other Wasserstein gradient flows that satisfy an evolutionary PDE of the type \cite[eqn. (11.1.6)]{ambrosio2005gradient}. The general existence and uniqueness of gradient flows (\textit{curves of maximal slopes}) in Wasserstein space are quite subtle \cite[Chapter 11]{ambrosio2005gradient}. However, evolutionary PDEs of the type \cite[eqn. (11.1.6)]{ambrosio2005gradient} include the classical case of the gradient flow of entropy as the heat equation and that of the Kullback-Leibler divergence (also called relative entropy) as the Fokker-Planck equation \cite[Theorem 5.1]{jko98}.  Our iterative approximating schemes always iterate a function of the barycentric projection of the Schr\"{o}dinger bridge with the \textit{same} marginals. The marginal itself changes with the iteration. Let us give the reader an intuitive idea how such schemes work.





The foundations of all these results can be found in Theorem \ref{thm:ld-sb-general-g} which shows that the two point joint distribution of a stationary Langevin diffusion is a close approximation of the Schrödinger bridge with equal marginals.
More concretely, let $\rho = e^{-g}$ be some element in $\cP_2(\R^d)$ for which the Langevin diffusion process with stationary distribution $\rho$ exists and is unique in law. This diffusion process is governed by the stochastic differential equation 
\begin{equation}\label{eq:langevin_sde}
    dX_t = -\frac{1}{2}\nabla g(X_t)dt + dB_t\,,\quad X_0 \sim \rho,
\end{equation}
where $(B_t, t \geq 0)$ is the standard $d$-dimensional Brownian motion. Therefore, for any $\eps>0$, $\LDstatic{\rho}{\eps} := \text{Law}(X_0,X_{\eps})$ is an element in $\Pi(\rho, \rho)$. Then, Theorem \ref{thm:ld-sb-general-g} shows that, under appropriate conditions on $\rho$, the symmetrized relative entropy, i.e., the Jensen-Shannon divergence, between $\LDstatic{\rho}{\eps}$ and the Schr\"odinger bridge $\SBstatic{\rho}{\eps}$ is $o(\eps^2)$. Although this level of precision is sufficient for our purpose, it is shown via explicit computations in \eqref{eq:gaussiancompute} that when $\rho$ is Gaussian this symmetrized relative entropy is actually $O(\eps^4)$, an order of magnitude tighter. 

In any case, one would expect that, under suitable assumptions, the barycentric projection
\begin{equation}\label{eq:barycenterapprox}
\BPbase{\rho}{\eps}(x) \approx \Exp{\LDstatic{\rho}{\eps}{}}[Y|X=x] \approx x - \frac{\eps}{2} \nabla g(x)= x + \frac{\eps}{2} \nabla \log \rho(x).  
\end{equation}
The quantity $\nabla \log \rho$ is called the score function. This is a reformulation of \cite[Theorem 1]{sander_22} which states that the difference between the two sides converges to zero in $L^2(\rho)$. 
In other words, one would expect that, under suitable assumptions,
\begin{equation}\label{eq:sbldapprox}
\SB{\rho}{\eps}{}:= \of{2\Id - \BPbase{\rho}{\eps}}_\#\rho \approx \of{\Id - \frac{\eps}{2} \nabla \log \rho(x)}_\#\rho.
\end{equation}
We claim this is approximately the solution of the heat equation at time $\epsilon$, starting from $\rho$. 

To see this, recall that absolutely continuous curves (see \cite[Definition 1.1.1]{ambrosio2005gradient}) $\left(\rho_t,\; t\ge 0 \right)$ on the Wasserstein space are characterized as weak solutions \cite[eqn.\ (8.1.4)]{ambrosio2005gradient} of continuity equations
\begin{align}\label{eq:continuity-equation}
   \frac{\partial}{\partial t} \rho_t + \nabla \cdot \of{v_t \rho_t} = 0\,.
\end{align}
Here $v_t: \R^d \to \R^d$  is the time-dependent Borel velocity field of the continuity equation whose properties are given by \cite[Theorem 8.3.1]{ambrosio2005gradient}. For any absolutely continuous curve, if $v_t$ belongs to the tangent space at $\rho_t$ (\cite[Definition 8.4.1]{ambrosio2005gradient}), \cite[Proposition 8.4.6]{ambrosio2005gradient} states the following approximation result for Lebesgue-a.e. $t \geq 0$
\begin{align}\label{eq:ags8-4-6}
    \lim\limits_{\eps \to 0} \frac{1}{\eps}\Was{2}(\rho_{t+\eps},(\Id+\eps v_t)_{\#}\rho_t) = 0. 
\end{align}
For example, if $\rho$ is the Wasserstein gradient flow of a function $\cF: \cP_2(\R^d) \to (-\infty, \infty]$ satisfying \cite[eqn. (11.1.6)]{ambrosio2005gradient}, $v_t= - \nabla \frac{\delta \mathcal{F}}{\delta \rho_t}$ where $\frac{\delta \mathcal{F}}{\delta \rho_t}$ is the first variation of $\cF$ (see \cite[Section 10.4.1]{ambrosio2005gradient}) at $\rho_t$. 
For the heat flow, which is the gradient flow of (half) entropy, $\frac{\delta \mathcal{F}}{\delta \rho_t}=\frac{1}{2}\log \rho_t$ and $v_t=-\frac{1}{2}\nabla \log \rho_t$. Comparing \eqref{eq:sbldapprox} with \eqref{eq:ags8-4-6} explains the previous claim. 

The claimed approximation in \eqref{eq:sbldapprox} is what we call \textit{one-step approximation}. In a somewhat more general setting, this is proved in Theorem \ref{thm:one_step_convergence}. 
Note that a crucial advantage we gain in the Schr\"odinger bridge scheme that is absent in Langevin diffusion or the continuity equation is that we no longer need to compute the score function $\nabla \log \rho$. Instead it suffices to be able to draw samples from $\rho$ and compute the approximate Schr\"odinger bridge, say by the Sinkhorn algorithm \cite{cuturi2013sinkhorn}, as done in Figure \ref{fig:gaussian_mix_eps0.1}.  

To summarize, the flow of ideas in the paper goes as follows. First, we show that the Schr\"odinger bridge map $\SB{\rho}{\eps}{}$, defined in its full generality in \eqref{sb-step}, is an approximation the pushforward of $\rho$ by the tangent vector of the gradient flow \eqref{eq:ags8-4-6}. This is known to be close to the gradient flow curve itself \cite[Proposition 8.4.6]{ambrosio2005gradient}. So one may expect that an iteration of this idea, where an absolutely continuous curve approximated by iterated pushforwards such as \eqref{eq:ags8-4-6} can be, under suitable assumptions, approximated by iterated Schr\"odinger bridge operators $\SB{\cdot}{\eps}{}$. One has to be mindful that the cumulative error remains controlled as we iterate, but in the best of situations this might work. 
And, in fact, it does for the heat flow, under suitable conditions, as proved in Theorem \ref{corollary:entropy-with-sb}.

Now, a natural question is how general is this method? Can we approximate other gradient flows, say that of Kullback-Leibler with respect to a log-concave density? What about absolutely continuous curves that are not gradient flows? The case of KL is partially settled in this paper in Theorem \ref{corollary:kl-with-sb}.
Let us state our proposed Schr\"{o}dinger Bridge scheme in somewhat general notation.
Consider a suitably nice functional $\mathcal{F}: \cP_{2}(\mathbb{R}^{d}) \to \mathbb{R} \cup \{+\infty\}$ and fix an initial measure $\rho_0 \in \cP_{2}(\mathbb{R}^{d})$. Suppose the gradient flow of $\cF$ can be approximated by a sequence of explicit Euler iterations $\left(\tilde{\rho}_\eps(k),\; k \in \N \right)$, where, starting from $\tilde{\rho}_\eps(0)=\rho_0$, iteratively define 
\begin{equation}\label{eq:explicit_euler_Intro}
\tilde{\rho}_\eps(k)=\left(\Id + \eps \nabla u_{k-1} \right)_{\#}\tilde{\rho}_\eps(k-1)
\end{equation}
with $-\nabla u_{k-1}$ being the Wasserstein gradient of $\cF$ at $\tilde{\rho}_\eps(k-1)$. Such an approximation scheme will be referred to as an explicit Euler iteration scheme and conditions must be imposed for it to converge to the underlying gradient flow. See Theorem \ref{thm:S_uniform_convergence} below for sufficient conditions. 

Ignoring technical details, our goal is to approximate each such explicit Euler step via a corresponding Schr\"odinger bridge step. This is accomplished by the introduction of what we call a \textit{surrogate measure}. For simplicity, consider the first step $\tilde{\rho}_\eps(1)=\left(\Id + \eps \nabla u_0 \right)_{\#}\rho_0$. 
Suppose that there is some $\theta \in \mathbb{R} \setminus \{0\}$ such that $\int e^{2\theta u_0} < \infty$, define the \textbf{surrogate measure} $\sigma_0$ to be one with the density
\begin{align}\label{eq:surrogate}
    \sigma_0(x) &:= \exp\left(2\theta u_0(x)- \Lambda_0(\theta)\right), \quad \text{where}\quad \Lambda_0(\theta):=\log \int e^{2\theta u_0(y)}dy.
\end{align}
As explained in \eqref{eq:barycenterapprox}, if we consider the Schr\"odinger bridge at temperature $\eps$ with both marginals to be $\sigma_0$, then
    $\BPbase{\sigma_0}{\eps}(x) \approx x+ \frac{\eps}{2}\nabla \log \sigma_0(x) = x + \theta \eps\nabla u_0(x)$.
If we now define one step in the SB scheme as
\begin{align}\label{sb-step}
    \tag{SB} \SB{\rho_0}{\eps}{} &= \left(\left(1-\frac{1}{\theta}\right)\Id + \frac{1}{\theta} \BPbase{\sigma_0}{\eps}\right)_{\#}\rho_0.
\end{align}
Observing that, as $\eps \downarrow 0$, we recover that $\SB{\rho}{\eps}{} \approx (\Id + \eps \nabla u_0)_{\#} \rho$.
Notice that the choice of $\theta$ does not play a role except to guarantee that the surrogate $\sigma_0$ in \eqref{eq:surrogate} is integrable. For the case of the heat flow, $u_0=-\frac{1}{2}\log \rho_0$. Hence, if we pick $\theta=-1$, $\sigma_0=\rho_0$ and then the scheme \eqref{eq:SBheatflow} becomes a special case of \eqref{sb-step}.

Let us consider the sequence of iterates of \eqref{sb-step}, interpolated in continuous time for some $0 < T < \infty$,
\[
\rho^\eps((k+1)/\eps):= \SB{\rho^\eps(k/\eps)}{\eps}{}, \quad k \in [N_\eps], \quad \text{where}\; N_\eps:=\lfloor T/\eps \rfloor. 
\]
Theorems \ref{corollary:entropy-with-sb} and \ref{corollary:kl-with-sb} list sufficient conditions under which such a scheme converges uniformly to the gradient flow curve as $\eps \rightarrow 0+$ for specific choices of functional $\cF$. 

We should point out that the primary reason why we need the assumptions in Theorems \ref{corollary:entropy-with-sb} and \ref{corollary:kl-with-sb} is to guarantee that the errors go to zero uniformly over a growing number of iterations. Much of the technical work in this paper goes to produce a simple set of sufficient conditions to ensure this. Consequently, our theorems are specialized to the case of gradient flows of geodesically convex regular functions. But the scope of Schr\"odinger bridge scheme is potentially larger. As shown in Section \ref{sec:example_reverse_flows} via explicit calculations, the Schr\"odinger bridge scheme may be used to approximate even time-reversed gradient flows. The corresponding PDEs, such as the backward heat equation/ Fokker-Planck, are ill-posed and care must be taken to implement any approximating scheme. Interestingly, if one replaces \eqref{eq:SBheatflow} by $\SB{\rho}{\eps}{}:= \of{\BPbase{\rho}{\eps}}_\#\rho$, then starting from a Gaussian initial distribution, and for all small enough $\eps$, the iterates are shown to converge to the time-reversed gradient flow of one-half entropy. 


In Section \ref{sec:one-step} we also generalize our Schr\"odinger bridge scheme \eqref{sb-step} by picking the surrogate measure \textit{depending on} $\eps$. This generalization is necessary to generalize the Schr\"odinger bridge scheme to cover tangent vectors that are not necessarily of the gradient type, but are approximated by a sequence of gradient fields. This generalization also covers the case of the renowned JKO scheme, \cite{jko98}, which can be likened to the implicit Euler discretization scheme of the gradient flow curve.  This, however, introduces an additional layer of complexity. 



Some of the technical challenges we overcome in this paper involve proving results about Schr\"odinger bridges at low temperatures when the marginals are not compactly supported. The latter is a common assumption in the literature, see for example \cite{pal2019difference,pooladian2022entropic, chiarini2022gradient,sander_22}. However, the existence of Langevin diffusion requires an unbounded support. Much of this work is done in Section \ref{sec:sb-ld} which may be of independent interest. For example, we extend the expansion of entropic OT cost function from \cite[Theorem 1.6]{conforti21deriv} for the case of same marginals in \eqref{eq:CTextension}. In all this analysis an unexpected mathematical object appears repeatedly, the \textit{harmonic characteristic} (\cite{vonrenesse-conf18}) defined below in \eqref{eq:harmoniccharacteriestic}. It appears in the computation of Fisher information of the Langevin and the Schr\"odinger bridges (see \eqref{eq:almost-bound-rxyeps}) and in the geodesic approximation of the heat flow (Lemma \ref{lem:heat-flow-sb-eps2}).
The role of this object merits a deeper study.

\textbf{Discussion on \cite{sander_22}.} 
The inspiring work~\cite{sander_22} is one of the motivations of our investigations. The authors of~\cite{sander_22} propose and analyze a modification to the self-attention mechanism from the Transformer artificial neural network architecture in which the self-attention matrix is enforced to be doubly-stochastic. Since the Sinkhorn algorithm is applied to enforce double stochasticity, the architecture is called the Sinkformer. A major contribution of \cite{sander_22} is the keen observation that self-attention mechanisms can be understood as determining a dynamical system whose particles are the input tokens (possibly after a suitable transformation). See also \cite{geshkovski2024emergence} for other work in this area. Specifically, \cite[Proposition 2,3]{sander_22} observes that, as a regularization parameter vanishes, the associated particle system under doubly-stochastic self-attention evolves according to the heat flow, and this property naturally follows from the doubly-stochastic constraints. 

Theorems~\ref{thm:ld-sb-general-g}-\ref{corollary:kl-with-sb} can be understood as probabilistic approaches to~\cite[Theorem 1]{sander_22}, as well as a finer treatment to key steps of the mathematical analysis of the convergence to the heat flow equation. The first statement of \cite[Theorem 1]{sander_22} concerns the convergence of the rescaled gradients of entropic potentials for the $\Schro$ bridge with the same marginals. Consider the so-called entropic (or, Schr\"odinger) potentials $(f_{\eps}, \eps > 0)$ as defined later on in \eqref{eq:SB-density}. In this language, \cite[Theorem 1]{sander_22} states that $\eps^{-1} \nabla f_{\eps} \to -\nabla \log \rho$ in $L^{2}(\rho)$, as $\eps \rightarrow 0+$. We are not able to prove this particular convergence; not least because we work in a slightly different setting, i.e., with measures fully supported on $\mathbb{R}^{d}$ whereas \cite{sander_22} assumes compact support (although, see the discussion after the proof of Theorem \ref{thm:ld-sb-general-g} for a similar result). Instead our argument relies on approximating the Schr\"odinger bridge by the Langevin diffusion which are undefined for measures with compact support. Moreover, in our paper we develop a general machinery that demonstrate the ability of the Schr\"odinger bridge scheme to approximate gradient flows of other functionals of interest beyond entropy.

\section{Approximation of Schr\"{o}dinger Bridge by Langevin Diffusion}\label{sec:sb-ld}
\subsection{Preliminaries}
In this section we fix a measure $\rho \in \cP_{2}(\mathbb{R}^{d})$ with density given by $\rho = e^{-g}$ for some function $g: \mathbb{R}^{d} \to \mathbb{R} \cup \{+\infty\}$. Here, we follow the standard convention in optimal transport of using the same notation to refer to an absolutely continuous measure and its density. We will state assumptions on $g$ later. Let $\cP_{2}^{ac}(\mathbb{R}^{d})$ denote the collection of probability measures on $\mathbb{R}^{d}$ with density and finite second moments. 
The aim of this section is to prove that when the two marginals are the same, then the $\Schro$ bridge and the stationary Langevin diffusion with that marginal are very close in the sense made precise in Theorem \ref{thm:one_step_convergence}.


We start by collating standard results about the Langevin diffusion and $\Schro$ bridge and introducing the notions from stochastic calculus of which we will make use. The relevant notation is all summarized in Table \ref{table:notation-sumary}. 


Let $C^{d}[0,\infty)$ denote the set of continuous paths from $[0,\infty)$ to $\mathbb{R}^{d}$, equipped with the locally uniform metric. Denote the coordinate process by $(\omega_t, t \geq 0)$. Endow this space with a filtration satisfying the \textit{usual conditions} \cite[Chapter 1, Definition 2.25]{karatshreve91}. For $x \in \mathbb{R}^{d}$, let $W_{x}$ denote the law on $C^{d}[0,\infty)$ of the standard $d$-dimensional Brownian motion started from $x$. 
All stochastic integrals are It\^{o} integrals.

Let $g \in C^2(\mathbb{R}^{d})$ and assume that $(Y_t, t \geq 0)$, the stationary Langevin diffusion with stationary measure equal to $\rho = e^{-g}$, exists. That is, its law on $C^{d}[0,\infty)$ is a weak solution to (\ref{eq:langevin_sde}) with initial condition $Y_0 \sim \rho$. Let $Q$ denote the law of this process on $C^{d}[0,\infty)$, and let $Q_x$ denote the law of the Langevin diffusion started from $x$. As in the Introduction, for $\eps > 0$, set $\LDstatic{\rho}{\eps} = \text{Law}(Y_0,Y_\eps)$. 
Under suitable assumptions on $g$ (see Assumption \ref{assumption:LD_SB_relative_entropy} below), $(Y_t, t \geq 0)$ exists with a.s.\ infinite explosion time \cite[Theorem 2.2.19]{royer-lsi}, and by \cite[Lemma 2.2.21]{royer-lsi} for each time $t > 0$ the laws of $Q_x$ and $P_x$ restricted to $C^{d}[0,t]$ are mutually absolutely continuous with Radon-Nikodym derivative 
\begin{align}\label{eq:rn-LD-BM}
    \left.\frac{dQ_x}{dW_x}\right|_{C^{d}[0,t]}(\omega) &= \sqrt{\frac{\rho(\omega_t)}{\rho(x)}}\exp\left(-\int_{0}^{t} \left(\frac{1}{8}\|\nabla g(\omega_s)\|^2-\frac{1}{4}\Delta g(\omega_s)\right) ds\right). 
\end{align}
The expression inside the integral of the $\exp(\cdot)$ is a distinguished quantity in the Langevin diffusion literature (see \cite{LKreener}). Denote it by 
\begin{align}\label{eq:harmoniccharacteriestic}
    \cU(x) &:= \frac{1}{8}\|\nabla g(x)\|^2 - \frac{1}{4}\Delta g(x). 
\end{align}
This function is called the harmonic characteristic in \cite[eqn.\ (2)]{vonrenesse-conf18}, which provides a heuristic physical interpretation for $\cU$ and $\nabla \cU$ (the latter of which appears in Theorem \ref{thm:ld-sb-general-g}) as the ``mean acceleration'' of the Langevin bridge. This quantity also appears in \eqref{eq:intacc} as the second derivative in time of a quantity of interest, giving another way to see it as mean acceleration.

Let $(p_t(\cdot,\cdot), t > 0)$ and $(q_t(\cdot,\cdot), t > 0)$ denote the transition kernels for the standard $d$-dimensional Brownian motion and the Langevin diffusion, respectively. By \eqref{eq:rn-LD-BM}
\begin{align*}
    \frac{q_{t}(x,y)}{p_{t}(x,y)} &= \sqrt{\frac{\rho(y)}{\rho(x)}}\Exp{W_{x}}\left[\exp\left(-\int_{0}^{t} \cU(\omega_s) ds\right)\middle| \; \omega_t = y\right].
\end{align*}
Notice that $\Exp{W_x}[\cdot|\omega_t = y]$ is expectation with respect to the law of the Brownian bridge from $x$ to $y$ over $[0,t]$. 
Define the following function
\begin{align}\label{eq:bridge-exp}
    c(x,y,\eps) := -\log\left(\Exp{W_{x}}\left[\exp\left(-\int_{0}^{\eps}\cU(\omega_s) ds\right)\middle| \; \omega_\eps = y\right]\right). 
\end{align}
Recall that $\LDstatic{\rho}{\eps}$ is joint density of $(Y_0,Y_{\eps})$ under Langevin; it is now equal to
\begin{align}\label{eq:LD-density}
    \LDstatic{\rho}{\eps}(x,y) &= \rho(x)q_{\eps}(x,y) = \sqrt{\rho(x)\rho(y)}p_{\eps}(x,y)\exp(-c(x,y,\eps)) \\
    &= \sqrt{\rho(x)\rho(y)}\frac{1}{(2\pi\eps)^{d/2}}\exp\left(-\frac{1}{2\eps}\|x-y\|^2 -c(x,y,\eps)\right).
\end{align}

Let $(G_t, t \geq 0)$ denote the corresponding Langevin semigroup and $L$ its extended infinitesmal generator (see \cite[Chapter VII, Section 1]{revuz2004continuous} for definitions). That is, we have for $t \geq 0$ and $f: \mathbb{R}^{d} \to \mathbb{R}$ bounded and measurable that
\begin{align*}
    G_{t}f(x) := \Exp{Q}[f(\omega_t)|\omega_0 = x].
\end{align*}
Letting $C^{2}(\mathbb{R}^{d})$ denote the collection of all twice continuously differentiable functions, by \cite[Chapter VIII, Proposition 3.4]{revuz2004continuous} we have for $f \in C^{2}(\mathbb{R}^{d})$ that
\begin{align}\label{eq:LD-generator}
    Lf &= \frac{1}{2}\Delta f + \left(-\frac{1}{2}\nabla g\right) \cdot \nabla f. 
\end{align}

We now define the static $\Schro$ bridge with marginals equal to $\rho$, denoted by $\SBstatic{\rho}{\eps}$. 
For an excellent and detailed account we refer readers to \cite{schroLeonard13} and references therein; we develop only the notions that we use within our proof. Under mild regularity assumptions on $\rho$ (for instance, finite entropy and a moment condition), it is known that for each $\eps > 0$, $\SBstatic{\rho}{\eps}$ admits an $(f,g)$-decomposition \cite[Theorem 2.8]{schroLeonard13}. As we consider the equal marginal $\Schro$ bridge, we can in fact insist that $f = g$. Thus, there exists $f_{\eps}: \mathbb{R}^{d} \to \mathbb{R}$ such that $\SBstatic{\rho}{\eps}$ admits a Lebesgue density of the form
\begin{align}\label{eq:SB-density}
    \SBstatic{\rho}{\eps}(x,y) &= \frac{1}{(2\pi\eps)^{d/2}}\exp\left(\frac{1}{\eps}\left(f_{\eps}(x)+f_{\eps}(y)-\frac{1}{2}\|x-y\|^2\right)\right)\rho(x)\rho(y). 
\end{align}
We refer to the $(f_{\eps}, \eps > 0)$ as entropic potentials. Note that, in general, entropic potentials are unique up to a choice of constant; however, our convention of symmetry forces that each $f_{\eps}$ is indeed unique. Moreover, combining (\ref{eq:base-for-SB}) and (\ref{eq:SB-density}), the $\Schro$ cost can be written in terms of the entropic potentials as 
\begin{align}\label{eq:schro-cost-potents}
    H(\SBstatic{\rho}{\eps}|\BMstatic{\rho}{\eps}) &= \Exp{\SBstatic{\rho}{\eps}}\left[\log \frac{d\SBstatic{\rho}{\eps}}{d\BMstatic{\rho}{\eps}}\right] = \Exp{\rho}\left[\frac{2}{\eps}f_{\eps}(X)\right]+\Ent(\rho).
\end{align}


As outlined in \cite[Proposition 2.3]{schroLeonard13}, there is a corresponding dynamic $\Schro$ bridge, obtained from $\SBstatic{\rho}{\eps}$ (the ``static'' $\Schro$ bridge) by interpolating with suitably rescaled Brownian bridges. More precisely, let $R_{\eps}$ denote the law of $(\sqrt{\eps}B_t, t \in [0,1])$ on $C^{d}[0,1]$, where $(B_t, t \geq 0)$ is the reversible $d$-dimensional Brownian motion. Then, the law of the $\eps$-dynamic $\Schro$ bridge on $C^{d}[0,1]$ is given by $\SBdynam{\rho}{\eps}$ where
\begin{align}\label{eq:dynSB-defn}
    \frac{d\SBdynam{\rho}{\eps}{}}{dR_{\eps}}(\omega) = a^{\eps}(\omega_0)a^{\eps}(\omega_1),
\end{align}
where $a^{\eps}(x) := \exp\left(\frac{1}{\eps}f_{\eps}(x)+\log \rho(x)\right)$. Letting $(P_t, t \geq 0)$ denote the (standard) heat semigroup, we define
\begin{align}\label{eq:SB-psi}
    \psi^{\eps}(t,x) &:= \log E_{R_{\eps}}\left[a^{\eps}(\omega_1)|\omega_t = x\right] = \log E_{R_{\eps}}\left[a^{\eps}(\omega_{1-t})|\omega_0 = x\right] = \log P_{\eps(1-t)}a^{\eps}(x),
\end{align}
where the second identity follows from the Markov property of $R_{\eps}$. 


Fix $\eps > 0$, set $\rho_{s}^{\eps} := (\pi_{s})_{\#}\SBdynam{\rho}{\eps}$ and call $(\rho_{s}^{\eps}, s \in [0,1])$ the $\eps$-\textbf{entropic interpolation} from $\rho$ to itself, explicitly
\begin{align}\label{eq:entropic-inter}
    \rho_{t}^{\eps} = P_{\eps t}a^{\eps}P_{\eps(1-t)}a^{\eps} = \exp(\psi^{\eps}(t,\cdot)+\psi^{\eps}(1-t,\cdot)).
\end{align}
By \cite[Proposition 2.3]{schroLeonard13}, we construct a random variable with law $\rho_{t}^{\eps}$ as follows. Let $(X_{\eps},Y_{\eps}) \sim \SBstatic{\rho}{\eps}$ and $Z \sim N(0,\Id)$ be independent to the pair, then
\begin{align}\label{eq:rv-entropic-inter}
    X_{t}^{\eps} = (1-t)X_{\eps}+t Y_{\eps}+\sqrt{\eps t(1-t)}Z \sim \rho_{t}^{\eps}.
\end{align}
Thanks to an analogue of the celebrated Benamou-Brenier formula for entropic cost (\cite[Theorem 5.1]{gentil2017analogy}), the entropic interpolation is in fact an absolutely continuous curve in the Wasserstein space. From \cite[Equation (2.14)]{conforti21deriv} the entropic interpolation satisfies the continuity equation \eqref{eq:continuity-equation} with $(\rho_t^{\eps},v_{t}^{\eps})_{t \in [0,1]}$ given by
\begin{align}\label{eq:cont-eq-entropic-inter}
    v_{t}^{\eps} = \frac{\eps}{2}\left(\nabla \psi^{\eps}(t,\cdot)-\nabla \psi^{\eps}(1-t,\cdot)\right).
\end{align}
Moreover, under mild regularity assumptions on $\rho$, we obtain the following entropic variant of Benamou-Brenier formulation \cite[Corollary 5.8]{gentil2017analogy} (with appropriate rescaling)
\begin{align}\label{eq:symm-ent-cost-BB}
    H(\SBstatic{\rho}{\eps}{}|\BMstatic{\rho}{\eps}{}) &= \inf\limits_{(\rho_t,v_t)} \left(\frac{1}{2\eps}\int_{0}^{1} \|v_t\|_{L^{2}(\rho_t)}^2 dt+\frac{\eps}{8}\int_{0}^{1} I(\rho_t)dt\right),
\end{align}
where $I(\cdot)$ is the Fisher information, defined for $\sigma \in \cP_{2}^{ac}(\mathbb{R}^{d})$ by $I(\sigma) = \int \|\nabla \log \sigma\|^{2} d\sigma$. The infimimum in \eqref{eq:symm-ent-cost-BB} is taken over all $(\rho_t,v_t)_{t \in [0,1]}$ satisfying $\rho_0 = \rho_1 = \rho$ and \eqref{eq:continuity-equation}. The optimal selection is given in \eqref{eq:cont-eq-entropic-inter}. That is, the optimal selection is the entropic interpolation. For an AC curve $(\rho_t, t \in [0,1])$, we will call the quantity $\int_{0}^{1} I(\rho_{t}) dt$ the integrated Fisher information. 

Lastly, for the same dynamic marginal $\Schro$ bridge, we make the following observation about the continuity at $\eps = 0$ of the integrated Fisher information along the entropic interpolation. 
\begin{lemma}\label{lem:integrated-fisher-continuity}
Let $\rho \in \cP_{2}(\mathbb{R}^{d})$ satisfy $-\infty <\Ent(\rho) < +\infty$ and $I(\rho) <+\infty$, then
    $\lim\limits_{\eps \downarrow 0} \int_{0}^{1} I(\rho_{t}^{\eps}) dt = I(\rho)$. 
\end{lemma}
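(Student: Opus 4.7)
The plan is to sandwich $\int_{0}^{1} I(\rho_{t}^{\eps}) dt$ between $I(\rho)$ from above (via the Benamou–Brenier variational formula) and from below (via lower semicontinuity of Fisher information), and then let $\eps \downarrow 0$.

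\textbf{Upper bound.} I would first exploit the variational formulation \eqref{eq:symm-ent-cost-BB}. The curve $\rho_{t} \equiv \rho$ with $v_{t} \equiv 0$ is admissible (both endpoint constraints hold trivially, and the continuity equation is satisfied), so plugging it into the infimum gives
\begin{equation*}
H(\SBstatic{\rho}{\eps}{} \mid \BMstatic{\rho}{\eps}{}) \;\leq\; \frac{\eps}{8} I(\rho).
\end{equation*}
On the other hand, the infimum in \eqref{eq:symm-ent-cost-BB} is attained by the entropic interpolation $(\rho_{t}^{\eps}, v_{t}^{\eps})$, and dropping the nonnegative kinetic term yields
\begin{equation*}
\frac{\eps}{8} \int_{0}^{1} I(\rho_{t}^{\eps}) dt \;\leq\; H(\SBstatic{\rho}{\eps}{} \mid \BMstatic{\rho}{\eps}{}) \;\leq\; \frac{\eps}{8} I(\rho).
\end{equation*}
Dividing by $\eps/8$ gives the uniform bound $\int_{0}^{1} I(\rho_{t}^{\eps}) dt \leq I(\rho)$, which in particular yields $\limsup_{\eps \downarrow 0} \int_{0}^{1} I(\rho_{t}^{\eps}) dt \leq I(\rho)$.

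\textbf{Lower bound.} For the matching lower bound I would use the fact that the Fisher information $\sigma \mapsto I(\sigma)$ is lower semicontinuous with respect to narrow convergence of probability measures (this follows from the identification $I(\sigma) = 4 \int |\nabla \sqrt{\sigma}|^{2} dx$ and the lower semicontinuity of the Dirichlet energy). The representation \eqref{eq:rv-entropic-inter}
\begin{equation*}
X_{t}^{\eps} = (1-t) X_{\eps} + t Y_{\eps} + \sqrt{\eps t(1-t)}\, Z, \qquad (X_{\eps},Y_{\eps}) \sim \SBstatic{\rho}{\eps}{},
\end{equation*}
combined with the classical fact that as $\eps \downarrow 0$ the static Schrödinger bridge $\SBstatic{\rho}{\eps}{}$ with equal marginals converges narrowly to the diagonal coupling $(\Id,\Id)_{\#}\rho$ (a consequence of $\Gamma$-convergence of entropic OT to OT, valid under the standing assumption $\Ent(\rho) < \infty$), shows that $X_{t}^{\eps} \to X \sim \rho$ in distribution for every $t \in [0,1]$. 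Hence $\rho_{t}^{\eps} \to \rho$ narrowly for each $t$, and lower semicontinuity gives
\begin{equation*}
I(\rho) \;\leq\; \liminf_{\eps \downarrow 0} I(\rho_{t}^{\eps}) \qquad \text{for all } t \in [0,1].
\end{equation*}
Applying Fatou's lemma then yields
\begin{equation*}
I(\rho) = \int_{0}^{1} I(\rho)\, dt \;\leq\; \int_{0}^{1} \liminf_{\eps \downarrow 0} I(\rho_{t}^{\eps}) \, dt \;\leq\; \liminf_{\eps \downarrow 0} \int_{0}^{1} I(\rho_{t}^{\eps}) dt,
\end{equation*}
and combining with the upper bound closes the sandwich.

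\textbf{Main obstacle.} The step requiring most care is the narrow convergence $\SBstatic{\rho}{\eps}{} \to (\Id,\Id)_{\#}\rho$ as $\eps \downarrow 0$; although standard in compactly supported settings, here $\rho$ need only satisfy $-\infty < \Ent(\rho) < \infty$ and $I(\rho) < \infty$, so I would invoke (or briefly justify) the general $\Gamma$-convergence result for entropic OT on $\cP_{2}(\R^{d})$ that ensures the unique minimizing coupling of the limit functional is the diagonal, and that the optimizers converge narrowly. Everything else is a direct manipulation of \eqref{eq:symm-ent-cost-BB}, \eqref{eq:rv-entropic-inter}, and lower semicontinuity of $I$.
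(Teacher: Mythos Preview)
Your proposal is correct and follows essentially the same two-sided sandwich as the paper: the upper bound via suboptimality of the constant curve in \eqref{eq:symm-ent-cost-BB}, and the lower bound via weak convergence $\rho_t^\eps \to \rho$, lower semicontinuity of $I$, and Fatou. The only cosmetic difference is that the paper cites \cite[Theorem 3.7(3)]{leo-sb-to-kp12} directly for the weak convergence of $\rho_t^\eps$ to $\rho$, whereas you route it through \eqref{eq:rv-entropic-inter} and convergence of the static bridge to the diagonal coupling; both are equally valid and rest on the same underlying result.
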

\begin{proof}
By \cite[Theorem 3.7(3)]{leo-sb-to-kp12}, 
for each $s \in [0,1]$,
$\rho_s^\eps$ converges weakly to $\rho$ as $\eps \downarrow 0$. By the lower semicontinuity of Fisher information with respect to weak convergence \cite[Proposition 14.2]{bobkov-fisher-22} and Fatou's Lemma,
\begin{align*}
    I(\rho) &= \int_{0}^{1} I(\rho) dt \leq \int_0^1 \liminf\limits_{\eps \downarrow 0} I(\rho_{t}^{\eps}) dt \leq \liminf\limits_{\eps \downarrow 0} \int_0^1 I(\rho_{t}^{\eps}) dt.
\end{align*}
Lastly, by \eqref{eq:symm-ent-cost-BB} and the submoptimality of the constant curve $\rho_t = \rho$, $v_t = 0$ for $t \in [0,1]$
\begin{align*}
    \frac{1}{2\eps}\int_{0}^{1} \|v_{t}^{\eps}\|_{L^{2}(\rho_t^{\eps})}^2 dt + \frac{\eps}{8}\int_{0}^{1}I(\rho_{t}^{\eps})dt \leq \frac{\eps}{8}I(\rho) \Rightarrow \int_{0}^{1} I(\rho_{t}^{\eps})dt \leq I(\rho).
\end{align*}
Altogether, it follows
\begin{align*}
    0 \leq \limsup\limits_{\eps \downarrow 0} \left( I(\rho)- \int_0^1 I(\rho_{t}^{\eps}) dt\right) &= I(\rho) - \liminf\limits_{\eps \downarrow 0} \int_0^1 I(\rho_{t}^{\eps}) dt \leq I(\rho) - I(\rho) = 0.
\end{align*}
\end{proof}

\subsection{Langevin Approximation to $\Schro$ Bridge}
Our main result of the section is the relative entropy bound between $\LDstatic{\rho}{\eps}$ and $\SBstatic{\rho}{\eps}$ given in Theorem \ref{thm:ld-sb-general-g}.

\begin{assumption}\label{assumption:LD_SB_relative_entropy}
Let $\rho = e^{-g}$ for $g: \mathbb{R}^{d} \to \mathbb{R}$, and recall that $\cU = \frac{1}{8}\|\nabla g\|^2-\frac{1}{4}\Delta g$. Assume that
\begin{itemize}
    \item[(1)] $\rho$ is subexponential (\cite[Definition 2.7.5]{vershynin-hdp}), $g \in C^{3}(\mathbb{R}^{d})$, $\cU$ is bounded below, $g \to \infty$ as $\|x\| \to \infty$.
    \item[(2)] There exists $C > 0$ and $n \geq 1$ such that $\|\nabla \cU(x)\|^2 \leq C(1+\|x\|^n)$.
    
    \item[(3)] $-\infty <\Ent(\rho) < +\infty$ and $I(\rho) < +\infty$.
\end{itemize}
\end{assumption}
Note that Assumption \ref{assumption:LD_SB_relative_entropy} (3) ensures the existence $\SBstatic{\rho}{\eps}$ and that \eqref{eq:symm-ent-cost-BB} holds \cite[(Exi),(Reg1),(Reg2)]{gentil2017analogy}.

These assumptions are, for instance, satisfied by the multivariate Gaussian $N(\mu,\Sigma)$ with $\mu \in \mathbb{R}^{d}$, $\Sigma \in \mathbb{R}^{d \times d}$ positive definite. This provides an essential class of examples as explicit computations of the $\Schro$ bridge between Gaussians are known \cite{janati2020}. In this instance, $g(x) = \frac{1}{2}(x-\mu)^{T}\Sigma^{-1}(x-\mu)+\frac{1}{2}\log((2\pi)^d \det \Sigma)$ and thus
\begin{align*}
    \cU(x) = \frac{1}{8}(x-\mu)^{T}\Sigma^{-2}(x-\mu)-\frac{1}{4}\Tr(\Sigma^{-1}) &\text{ and } \nabla \cU(x) = \frac{1}{4}\Sigma^{-2}(x-\mu). 
\end{align*}
Importantly, for $X \sim N(\mu,\Sigma)$ this establishes that $\nabla \cU(X) \sim N(0,\frac{1}{16}\Sigma^{-3})$ and thus $\Exp{}\|\nabla \cU(X)\|^{2} = \frac{1}{16} \Tr(\Sigma^{-3})$. More generally, when $g$ is polynomial such that $\int_{\mathbb{R}^{d}} e^{-g} < +\infty$ these assumptions are also satisfied. 

Additionally, we remark that the class of functions $g$ satisfying Assumption \ref{assumption:LD_SB_relative_entropy} is stable under the addition of functions $h \in C^{3}(\mathbb{R}^{d})$ such that $h$ and all of its derivatives up to and including order $3$ are bounded (e.g.\ $h \in C_{c}^{3}(\mathbb{R}^{d})$). It is also stable under the addition of linear and quadratic functions, so long as the resulting function has a finite integral when exponentiated. 

We begin with a preparatory lemma establishing some important integrability poperties we make use of in the proof of Theorem \ref{thm:ld-sb-general-g}.
\begin{lemma}\label{lem:integ-entropic-inter}
Let $\rho = e^{-g} \in \cP_{2}(\mathbb{R}^{d})$ satisfying Assumption \ref{assumption:LD_SB_relative_entropy}.
\begin{itemize}
    \item[(1)] For each $\eps > 0$, the densities along the entropic interpolation $(\rho_{s}^{\eps}, s \in [0,1])$ is uniformly subexponential, meaning that for each $T > 0$ there is a constant $K > 0$ such that for $X_{s}^{\eps} \sim \rho_{s}^{\eps}$ we have $\sup\limits_{\eps \in (0,T)}\sup\limits_{s \in [0,1]} \|X_{s}^{\eps}\|_{\psi_{1}} \leq K$, where $\|.\|_{\psi_{1}}$ is the subexponential norm defined in \cite[Definition 2.7.5]{vershynin-hdp}. 
    \item[(2)] The following integrability conditions hold:
    \begin{itemize}
        \item[(a)] The function $\cU = \frac{1}{8}\|\nabla g\|^{2}-\frac{1}{4}\Delta g$ is in $L^{2}(\rho_{s}^{\eps})$ for all $s \in [0,1]$.
        \item[(b)] The function from $[0,1] \times C^{d}[0,1]$ to $\mathbb{R}$ given by $(t,\omega) \mapsto \cU(\omega_t)$ is in $L^{2}(dt \otimes d\SBdynam{\rho}{\eps})$. Similarly, $(t,\omega) \mapsto \nabla \cU(\omega_t)$ is in $L^{2}(dt \otimes d\SBdynam{\rho}{\eps})$.
        \item[(c)] For any $T > 0$, $\sup\limits_{\eps \in (0,T)} \int_{0}^{1} \Exp{\SBdynam{\rho}{\eps}}[\|\nabla \cU(\omega_s)\|^2]ds < +\infty$.
    \end{itemize}
\end{itemize}
\end{lemma}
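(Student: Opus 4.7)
The plan is to use the explicit representation \eqref{eq:rv-entropic-inter} of the entropic interpolation random variable to prove (1), deduce polynomial growth of $\cU$ itself from Assumption \ref{assumption:LD_SB_relative_entropy}(2), and then combine these two observations to obtain the integrability statements in (2). The argument is essentially routine bookkeeping; the only mildly delicate point is handling the $\psi_{1}$-Orlicz norm of Euclidean norms of vector-valued summands, which reduces directly to the scalar case because both $\|\cdot\|$ and $\|\cdot\|_{\psi_{1}}$ are subadditive.

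For part (1), I would apply the triangle inequality for the subexponential norm to the decomposition
\[
X_{t}^{\eps} = (1-t) X_{\eps} + t Y_{\eps} + \sqrt{\eps t(1-t)}\, Z, \qquad (X_{\eps}, Y_{\eps}) \sim \SBstatic{\rho}{\eps},\; Z \sim N(0, \Id),
\]
with $Z$ independent of $(X_{\eps}, Y_{\eps})$. Since both marginals of $\SBstatic{\rho}{\eps}$ equal $\rho$, which is subexponential by Assumption \ref{assumption:LD_SB_relative_entropy}(1), one has $\|X_{\eps}\|_{\psi_{1}} = \|Y_{\eps}\|_{\psi_{1}} = K_{\rho} < \infty$, and the standard Gaussian contributes a universal constant $\|Z\|_{\psi_{1}}$. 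Using $\sqrt{\eps t(1-t)} \leq \tfrac{1}{2}\sqrt{T}$ for $\eps \in (0,T)$ and $t \in [0,1]$ together with the triangle inequality for $\|\cdot\|_{\psi_{1}}$ then gives
\[
\sup_{\eps \in (0,T),\, t \in [0,1]} \|X_{t}^{\eps}\|_{\psi_{1}} \leq K_{\rho} + \tfrac{1}{2}\sqrt{T} \cdot \|Z\|_{\psi_{1}} =: K,
\]
which is the desired uniform subexponential bound.

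For (2)(a), I would upgrade the pointwise bound on $\|\nabla \cU\|^{2}$ from Assumption \ref{assumption:LD_SB_relative_entropy}(2) to polynomial growth of $\cU$ itself. Writing $\cU(x) = \cU(0) + \int_{0}^{1} \nabla \cU(t x) \cdot x \, dt$ and using $\sqrt{a+b} \leq \sqrt{a} + \sqrt{b}$, one obtains $|\cU(x)| \leq C'\bigl(1 + \|x\|^{1 + n/2}\bigr)$ for some constant $C'$ depending on $C$, $n$, and $|\cU(0)|$. Consequently $\cU^{2}$ has polynomial growth. A standard consequence of subexponentiality (cf.\ \cite[Proposition 2.7.1]{vershynin-hdp}) is that polynomial moments of every order are finite and controlled by a power of the subexponential norm; combined with part (1), this yields a uniform-in-$(\eps,s)$ bound on $\Exp{\rho_{s}^{\eps}}[\cU^{2}]$, and in particular $\cU \in L^{2}(\rho_{s}^{\eps})$ for every $s \in [0,1]$ and $\eps > 0$.

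For (2)(b) and (2)(c), I would invoke Fubini together with the observation that the time-$s$ marginal of $\SBdynam{\rho}{\eps}$ is $\rho_{s}^{\eps}$, so that
\[
\int_{0}^{1} \Exp{\SBdynam{\rho}{\eps}}\bigl[\|\nabla \cU(\omega_{s})\|^{2}\bigr]\, ds = \int_{0}^{1} \Exp{\rho_{s}^{\eps}}\bigl[\|\nabla \cU\|^{2}\bigr]\, ds,
\]
and analogously for $\cU(\omega_{s})^{2}$ in place of $\|\nabla \cU(\omega_{s})\|^{2}$. Bounding the integrands via Assumption \ref{assumption:LD_SB_relative_entropy}(2) and the polynomial growth of $\cU^{2}$ derived above, and then applying the uniform polynomial moment control from part (1), delivers (2)(b) for both $\cU$ and $\nabla \cU$ and yields the uniform bound (2)(c) in a single stroke.
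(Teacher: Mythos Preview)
Your proposal is correct and follows essentially the same approach as the paper: both use the representation \eqref{eq:rv-entropic-inter} together with the triangle inequality for $\|\cdot\|_{\psi_1}$ to get part (1), then combine the resulting uniform moment control with the polynomial growth of $\cU$ and $\nabla\cU$ to deduce (2). Your write-up is in fact slightly more explicit than the paper's, spelling out the line-integral argument for the polynomial growth of $\cU$ and the Fubini step reducing the path-space $L^2$ norms to integrals against the time marginals $\rho_s^\eps$.
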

\begin{proof}
Fix $\eps > 0$ and $s \in [0,1]$. Let $(X,Y)$ be random variables with joint law $\SBstatic{\rho}{\eps}$. Let $Z \sim N(0,\Id)$ be independent of $(X,Y)$. Then $X_{s}^{\eps}$ as given in \eqref{eq:rv-entropic-inter} has law $\rho_{s}^{\eps}$.
As $\|\cdot\|_{\psi_{1}}$ is a norm, observe that for any $T > 0$
\begin{align*}
    \sup\limits_{\eps \in (0,T], s \in [0,1]}\|X_{s}^{\eps}\|_{\psi_{1}} &\leq \sup\limits_{\eps \in (0,T], s \in [0,1]} \left((1-s)\|X\|_{\psi_{1}}+s\|Y\|_{\psi_{1}}+\sqrt{\eps s(1-s)}\|Z\|_{\psi_{1}}\right) < +\infty 
\end{align*}
establishing the uniform upper bound on the subexponential norm. 

It then follows that
\begin{align*}
    \sup\limits_{\eps \in (0,T], s \in [0,1]}\Exp{}[\|\nabla \cU(X^{\eps}_{s})\|^2] &\leq \sup\limits_{\eps \in (0,T], s \in [0,1]}C(1+\Exp{}[\|X_{s}^{\eps}\|^n]) <+\infty. 
\end{align*}
This observation also establishes that $(t,\omega) \mapsto \nabla \cU(\omega_t)$ is in $L^{2}(dt \otimes d\SBdynam{\rho}{\eps})$. 
By assumption on $\nabla \cU$, $\cU$ is also of uniform polynomial growth, and thus the same argument establishes the remaining claims. 
\end{proof}

\begin{theorem}\label{thm:ld-sb-general-g}
    For $\rho \in \cP_2(\R^d)$ satisfying Assumption \ref{assumption:LD_SB_relative_entropy}(1,3), for all $\eps > 0$
    \begin{align}\label{eq:rel-ent-ld-sb}
        H(\LDstatic{\rho}{\eps} | \SBstatic{\rho}{\eps})+H(\SBstatic{\rho}{\eps} | \LDstatic{\rho}{\eps}) \leq \frac{1}{2}\eps^2 \left(I(\rho) - \int_{0}^{1}I(\rho_t^\eps) dt\right)^{1/2}\left(\int_{0}^{1} \Exp{\rho_t^\eps}\|\nabla \cU\|^2 dt\right)^{1/2}.
    \end{align}
    In particular, additionally under Assumption \ref{assumption:LD_SB_relative_entropy}(2), 
    \[
    \lim\limits_{\eps \downarrow 0} \frac{1}{\eps^2}\left(H(\LDstatic{\rho}{\eps} | \SBstatic{\rho}{\eps})+H(\SBstatic{\rho}{\eps} | \LDstatic{\rho}{\eps})\right) = 0.
    \]
\end{theorem}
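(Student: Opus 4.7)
The plan is to exploit that both $\LDstatic{\rho}{\eps}$ and $\SBstatic{\rho}{\eps}$ have marginals equal to $\rho$, which forces a cancellation in the symmetric KL. Reading off the log-density ratio from \eqref{eq:LD-density} and \eqref{eq:SB-density}, every summand in $\log(d\LDstatic{\rho}{\eps}/d\SBstatic{\rho}{\eps})$ that depends on only one coordinate (the $\tfrac{1}{2}\log\rho$ terms and the entropic-potential terms $\tfrac{1}{\eps}f_\eps$) integrates to zero against the signed measure $\LDstatic{\rho}{\eps}-\SBstatic{\rho}{\eps}$, leaving only the bridge functional $c(x,y,\eps)$ from \eqref{eq:bridge-exp}:
\begin{equation*}
H(\LDstatic{\rho}{\eps}|\SBstatic{\rho}{\eps}) + H(\SBstatic{\rho}{\eps}|\LDstatic{\rho}{\eps}) = \Exp{\SBstatic{\rho}{\eps}}[c(X,Y,\eps)] - \Exp{\LDstatic{\rho}{\eps}}[c(X,Y,\eps)].
\end{equation*}

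Next I would evaluate each expectation by disintegrating over the corresponding dynamic bridge. Under $\SBstatic{\rho}{\eps}$, the conditional law of $\SBdynam{\rho}{\eps}$ given its endpoints is a Brownian bridge, since the Radon--Nikodym factor in \eqref{eq:dynSB-defn} depends only on $\omega_0$ and $\omega_1$. Applying the identity $-\log \Exp{\nu}[e^{-X}] = \Exp{\nu}[X] - H(\nu \mid \nu')$ (with $\nu$ the Brownian bridge, $X = \int_0^\eps \cU(\omega_s)\,ds$, and $\nu'$ the corresponding tilt, which by \eqref{eq:rn-LD-BM} is exactly the Langevin bridge), and reparametrizing the $[0,1]$ dynamic convention to physical time $[0,\eps]$, yields
\begin{equation*}
\Exp{\SBstatic{\rho}{\eps}}[c(X,Y,\eps)] = \eps\int_0^1 \Exp{\rho_t^\eps}[\cU]\,dt - A, \qquad A \geq 0.
\end{equation*}
The dual identity $-\log\Exp{\nu}[e^{-X}] = \Exp{\nu'}[X] + H(\nu' \mid \nu)$, together with stationarity of the Langevin diffusion under $\LDstatic{\rho}{\eps}$, gives
\begin{equation*}
\Exp{\LDstatic{\rho}{\eps}}[c(X,Y,\eps)] = \eps\,\Exp{\rho}[\cU] + B, \qquad B \geq 0.
\end{equation*}
Subtracting and discarding the nonnegative $A+B$ yields $H(\LDstatic{\rho}{\eps}|\SBstatic{\rho}{\eps}) + H(\SBstatic{\rho}{\eps}|\LDstatic{\rho}{\eps}) \leq \eps\int_0^1 F(t)\,dt$ with $F(t):=\Exp{\rho_t^\eps}[\cU]-\Exp{\rho}[\cU]$.

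To bound $\int_0^1 F(t)\,dt$ I would use the continuity equation \eqref{eq:cont-eq-entropic-inter} to obtain $F'(t) = \int \nabla\cU \cdot v_t^\eps\, d\rho_t^\eps$ and then integrate by parts in $t$. The symmetries $\rho_{1-t}^\eps=\rho_t^\eps$ and $v_{1-t}^\eps=-v_t^\eps$ (both visible in \eqref{eq:cont-eq-entropic-inter} in the equal-marginal setting) let me average the two representations $\int F = \int(1-s)F'(s)\,ds$ and $\int F = -\int s\,F'(s)\,ds$, producing
\begin{equation*}
\int_0^1 F(t)\,dt = \tfrac{1}{2}\int_0^1 (1-2s)\int \nabla\cU\cdot v_s^\eps\,d\rho_s^\eps\,ds.
\end{equation*}
Applying Cauchy--Schwarz (with $(1-2s)^2\leq 1$) together with the kinetic-energy bound $\int_0^1 \|v_s^\eps\|_{L^2(\rho_s^\eps)}^2\,ds \leq \tfrac{\eps^2}{4}\bigl(I(\rho)-\int_0^1 I(\rho_t^\eps)\,dt\bigr)$---obtained by plugging the constant curve $\rho_t\equiv\rho$, $v_t\equiv 0$ into the Benamou--Brenier representation \eqref{eq:symm-ent-cost-BB}---recovers the inequality \eqref{eq:rel-ent-ld-sb} (in fact with the sharper constant $\tfrac{1}{4}$ in place of $\tfrac{1}{2}$). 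The $o(\eps^2)$ limit is then immediate: Lemma \ref{lem:integrated-fisher-continuity} forces the Fisher-information gap to vanish, while Lemma \ref{lem:integ-entropic-inter}(2)(c) keeps $\int_0^1 \Exp{\rho_t^\eps}\|\nabla\cU\|^2\,dt$ uniformly bounded as $\eps\downarrow 0$.

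The chief technical obstacle is to justify rigorously the differential identity $F'(t)=\int \nabla\cU\cdot v_t^\eps\,d\rho_t^\eps$, since $\cU$ has only polynomial growth rather than compact support. Passing the time derivative under the integral and transferring the gradient onto $\cU$ via integration by parts requires combining the weak continuity equation from \cite[Theorem 8.3.1]{ambrosio2005gradient} with the uniform subexponential tails of $\rho_t^\eps$ and the $L^2$ control of $\nabla\cU$ supplied by Lemma \ref{lem:integ-entropic-inter}, while verifying that boundary terms in a truncation or mollification argument vanish. Reconciling the $[0,1]$ and $[0,\eps]$ time parametrizations in the bridge disintegration is a secondary bookkeeping point that does not affect the structure of the bound.
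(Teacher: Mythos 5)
Your proposal is correct and follows the same skeleton as the paper's proof: the cancellation of the one-coordinate terms giving the identity \eqref{eq:sym-ent-identity}, the reduction of the symmetric relative entropy to $\eps\int_0^1\left(\Exp{\rho_t^\eps}[\cU]-\Exp{\rho}[\cU]\right)dt$, and then the continuity-equation/Cauchy--Schwarz argument combined with the kinetic-energy bound \eqref{eq:integrated-velocity-bound} extracted from the entropic Benamou--Brenier formula \eqref{eq:symm-ent-cost-BB}; the truncation issue you flag for extending the weak continuity equation to the polynomially growing $\cU$ is exactly what the paper handles with the cutoff functions $\chi_R$ and the integrability supplied by Lemma \ref{lem:integ-entropic-inter}. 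You deviate in two harmless ways. For the Langevin term you use the Gibbs variational identity for the bridge tilt together with stationarity of $Q$, whereas the paper computes $\Exp{\rho}[\cU]=-\tfrac18 I(\rho)$ by integration by parts and bounds $\Exp{\LDstatic{\rho}{\eps}}[-c(X,Y,\eps)]$ by $H(Q|W_{\rho})=\tfrac{\eps}{8}I(\rho)$ via Girsanov and the data-processing inequality; the two routes are equivalent (your nonnegative deficit $B$ is precisely the conditional relative entropy discarded by data processing), and yours sidesteps the explicit Fisher-information identity. Second, your symmetrization in $t\mapsto 1-t$ (using $\rho_{1-t}^\eps=\rho_t^\eps$ and $v_{1-t}^\eps=-v_t^\eps$, which indeed follow from \eqref{eq:entropic-inter} and \eqref{eq:cont-eq-entropic-inter}) sharpens the constant from $\tfrac12$ to $\tfrac14$, which still implies \eqref{eq:rel-ent-ld-sb}; the paper instead bounds the inner integral over $[0,s]$ by the integral over $[0,1]$ uniformly in $s$. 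The limiting $o(\eps^2)$ statement is obtained exactly as in the paper from Lemma \ref{lem:integrated-fisher-continuity} and Lemma \ref{lem:integ-entropic-inter}.
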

Before proving the theorem, we pause to comment on the sharpness of the inequality presented in \eqref{eq:rel-ent-ld-sb}. As the $\Schro$ bridge is explicitly known for Gaussian marginals \cite{janati2020}, consider the case $\rho = N(0,1)$. Define two matrices
\begin{align*}
    \Sigma_1 =  \begin{pmatrix}
        1 & e^{-\eps/2} \\ e^{-\eps/2} & 1
    \end{pmatrix}, \Sigma_2 = \begin{pmatrix}
        1 & \frac{1}{2}(\sqrt{\eps^2+4}-\eps) \\ \frac{1}{2}(\sqrt{\eps^2+4}-\eps) & 1 
    \end{pmatrix}.
\end{align*}
It is well known that $\LDstatic{\rho}{\eps} \sim N(0,\Sigma_1)$ is the Ornstein-Uhlenbeck transition density. By \cite{janati2020}, it is known that $\SBstatic{\rho}{\eps} \sim N(0,\Sigma_2)$. One then computes
\begin{align}\label{eq:gaussiancompute}
    H(\LDstatic{\rho}{\eps}|\SBstatic{\rho}{\eps}) + H(\SBstatic{\rho}{\eps}|\LDstatic{\rho}{\eps}) = \frac{1}{2}\Tr(\Sigma_{1}^{-1}\Sigma_{2})+\frac{1}{2}\Tr(\Sigma_{2}^{-1}\Sigma_{1}) - 2= \frac{1}{1152}\eps^{4}+\cO(\eps^{5}). 
\end{align}
On the other hand, the entropic interpolation between Gaussians is computed in \cite{gentil2017analogy}, and $I(\rho_{t}^{\eps})$ is minimized at $t = 1/2$. From a computation in Section \ref{sec:examples}, see \eqref{eq:entropic_interpolation},
\begin{align*}
    I(\rho) - \int_{0}^{1}I(\rho_t^\eps) dt \leq I(\rho) - I(\rho_{1/2}^{\eps}) = \cO(\eps^2). 
\end{align*}
Altogether, the RHS of \eqref{eq:rel-ent-ld-sb} is then $\cO(\eps^3)$. That is, the transition densities for the OU process provide a tighter approximation (an order of magnitude) for same marginal $\Schro$ bridge than given by Theorem \ref{thm:ld-sb-general-g}.

\begin{proof}[Proof of Theorem \ref{thm:ld-sb-general-g}]
We argue in the following manner. First, using the densities for $\SBstatic{\rho}{\eps}$ and $\LDstatic{\rho}{\eps}$ given in (\ref{eq:LD-density}) and (\ref{eq:SB-density}), respectively, we compute their likelihood ratio and obtain exact expressions for the two relative entropy terms appearing in the statement of the Lemma. The resulting expression is the difference in expectation of the expression $c(x,y,\eps)$ defined in (\ref{eq:bridge-exp}) with respect to the Schr\"{o}dinger bridge and Langevin diffusion. We then extract out the leading order terms from both expectations and show that they cancel. To obtain the stated rate of decay, we then analyze the remaining term with an analytic argument from the continuity equation. 

\noindent\textbf{Step 1.} The following holds    
    \begin{align}\label{eq:sym-ent-identity}
        H(\LDstatic{\rho}{\eps}{} | \SBstatic{\rho}{\eps})+H(\SBstatic{\rho}{\eps} | \LDstatic{\rho}{\eps}) &= \Exp{\SBstatic{\rho}{\eps}{}}[c(X,Y,\eps)]-\Exp{\LDstatic{\rho}{\eps}{}}[c(X,Y,\eps)].
    \end{align}
To prove \eqref{eq:sym-ent-identity}, from (\ref{eq:LD-density}) and (\ref{eq:SB-density})
    \begin{align*}
        \frac{d\LDstatic{\rho}{\eps}{}}{d\SBstatic{\rho}{\eps}{}} = \frac{1}{\sqrt{\rho(x)\rho(y)}}\exp\left(-\frac{1}{\eps}f_{\eps}(x)-\frac{1}{\eps}f_{\eps}(y)-c(x,y,\eps)\right).
    \end{align*}
As both $\SBstatic{\rho}{\eps}{}, \LDstatic{\rho}{\eps}{} \in \Pi(\rho,\rho)$, we have that
\begin{align*}
    H(\LDstatic{\rho}{\eps}{} | \SBstatic{\rho}{\eps}) &= \Exp{\LDstatic{\rho}{\eps}{}}\left[\log \frac{d\LDstatic{\rho}{\eps}{}}{d\SBstatic{\rho}{\eps}{}}\right] = - \frac{2}{\eps}\Exp{\rho}[f_{\eps}(X)] -\Ent(\rho) - \Exp{\LDstatic{\rho}{\eps}{}}[c(X,Y,\eps)], \\
    H(\SBstatic{\rho}{\eps}|\LDstatic{\rho}{\eps}{}) &= \Exp{\SBstatic{\rho}{\eps}}\left[-\log \frac{d\LDstatic{\rho}{\eps}{}}{d\SBstatic{\rho}{\eps}{}}\right] = \frac{2}{\eps}\Exp{\rho}[f_{\eps}(X)] + \Ent(\rho) + \Exp{\SBstatic{\rho}{\eps}{}}[c(X,Y,\eps)].
\end{align*}
Equation \eqref{eq:sym-ent-identity} follows by adding these expressions together. 

\noindent\textbf{Step 2.} Recall $R_{\eps}$ as used in \eqref{eq:dynSB-defn} and let
\begin{align}\label{eq:SB-LD-remainder}
    R(x,y,\eps) &:= -\log\left(\Exp{R_{\eps}}\left[\exp\left(-\int_{0}^{1} \eps \left(\cU(\omega_s)-\cU(\omega_0)\right)ds \middle| \omega_0 = x, \omega_1 = y\right)\right]\right).
\end{align}
Then we claim that 
\begin{align}\label{eq:bound-remainder}
    H(\LDstatic{\rho}{\eps}{} | \SBstatic{\rho}{\eps})+H(\SBstatic{\rho}{\eps} | \LDstatic{\rho}{\eps}) &\leq \Exp{\SBstatic{\rho}{\eps}{}}[R(X,Y,\eps)].
\end{align}

Recall that $W_{x}$ is Wiener measure starting from $x$, and
\begin{align*}
    c(x,y,\eps) &= - \log\left(\Exp{W_{x}}\left[\exp\left(-\int_{0}^{\eps} \cU(\omega_s)ds\right) \middle|  \; \omega_\eps = y \right]\right) \\
        &= \eps \cU(x) - \log\left(\Exp{W_{x}}\left[\exp\left(-\int_{0}^{\eps} \left(\cU(\omega_s)-\cU(\omega_0)\right)ds\right)\middle|\; \omega_{\eps} = y\right]\right).
\end{align*}
For the integral inside the $\exp\left(\cdot\right)$, we rescale $s$ from $[0,\eps]$ to $[0,1]$. By Brownian scaling the law of $(\omega_{\eps s}, s \geq 0)$ under $W_{x}$ is equal to the law of $(\omega_s, s \geq 0)$ under $R_{\eps}$ with initial condition $x$, which gives
\begin{align*}
    c(x,y,\eps) = \eps \cU(x) + R(x,y,\eps).
\end{align*}
As $\SBstatic{\rho}{\eps}{} \in \Pi(\rho,\rho)$, thanks to an integration by parts, 
\begin{align*}
    \Exp{\SBstatic{\rho}{\eps}{}}[\cU(X)] &= \int_{\R^d} \cU(x)\rho(x)dx= \frac{1}{8}I(\rho)-\int_{\R^d} \frac{1}{4} \Delta g(x) e^{-g(x)}dx \\
    &= \frac{1}{8}I(\rho) - \int_{\R^d} \frac{1}{4} \|\nabla g(x)\|^{2} e^{-g(x)}dx  = -\frac{1}{8}I(\rho). 
\end{align*}
Thus from \eqref{eq:sym-ent-identity} observe
\begin{align*}
    \Exp{\SBstatic{\rho}{\eps}{}}[c(X,Y,\eps)]-\Exp{\LDstatic{\rho}{\eps}{}}[c(X,Y,\eps)] &\leq -\frac{\eps}{8}I(\rho) + \Exp{\SBstatic{\rho}{\eps}}[R(X,Y,\eps)]-\Exp{\LDstatic{\rho}{\eps}{}}[c(X,Y,\eps)].
\end{align*}
Thus, to prove \eqref{eq:bound-remainder} it will suffice to show
\begin{align}\label{eq:ld-c-upperbdd}
    \Exp{\LDstatic{\rho}{\eps}{}}[-c(X,Y,\eps)] \leq \frac{\eps}{8}I(\rho).
\end{align}
Observe that
\begin{align*}
    \frac{d\LDstatic{\rho}{\eps}{}}{d\BMstatic{\rho}{\eps}{}} &= \frac{\sqrt{\rho(y)}}{\sqrt{\rho(x)}}\exp(-c(x,y,\eps)).
\end{align*}
Thus, as $\LDstatic{\rho}{\eps} \in \Pi(\rho,\rho)$ and $-\infty < \Ent(\rho) < +\infty$, 
\begin{align}\label{eq:rel-ent-ld-bm}
    H(\LDstatic{\rho}{\eps}{}|\BMstatic{\rho}{\eps}{}) &= \Exp{\LDstatic{\rho}{\eps}{}}\left[\log \frac{d\LDstatic{\rho}{\eps}{}}{d\BMstatic{\rho}{\eps}{}}\right] =\Exp{\LDstatic{\rho}{\eps}{}}\left[-c(X,Y,\eps)\right]. 
\end{align}
On $C^{d}[0,\eps]$ let $W_{\rho}$ denote the law of Brownian motion with initial distribution $\rho$. 
By a well-known formula (see \cite[Equation (5.1)]{DGW} for instance)
\begin{align*}
    H(Q_x|W_x) &= \frac{1}{2} \Exp{Q_x}\left[\int_{0}^{\eps} \|-\frac{1}{2}\nabla g(\omega_s)\|^2 ds\right] = \frac{1}{8}\int_0^{\eps} \Exp{Q_x}[\|\nabla g(\omega_s)\|^2]ds. 
\end{align*}
It then follows from the factorization of relative entropy that
\begin{align*}
    H(Q|W_{\rho}) &= \int H(Q_x|W_x)\rho(x)dx = \frac{1}{8}\int \left(\int_0^{\eps} \Exp{Q_x}\left[\|\nabla g(\omega_s)\|^2\right]ds \right)\rho(x)dx \\
    &= \frac{1}{8}\int_{0}^{\eps}E_{Q}\left[\|\nabla g(\omega_s)\|^2\right]ds = \frac{\eps}{8}I(\rho). 
\end{align*}
As $\LDstatic{\rho}{\eps} = (\pi_0,\pi_{\eps})_{\#}Q$, \eqref{eq:ld-c-upperbdd} follows by \eqref{eq:rel-ent-ld-bm} and the information processing inequality (see \cite[Lemma 9.4.5]{ambrosio2005gradient} for instance).

\noindent\textbf{Step 3.} Show $\Exp{\SBstatic{\rho}{\eps}{}}[R(X,Y,\eps)]$ is bounded above by the RHS in \eqref{eq:rel-ent-ld-sb}. 
To start, apply Jensen's inequality to interchange the $-\log$ and $\Exp{R_{\eps}}[\; \cdot \; |\;\omega_0 = x, \omega_1= y]$ in the expression for $R(x,y,\eps)$ in \eqref{eq:SB-LD-remainder} to obtain
\begin{align*}
    R(x,y,\eps) &\leq \eps\Exp{R_{\eps}}\left[\int_{0}^{1} \left(\cU(\omega_s)-\cU(\omega_0)\right)ds \middle| \; \omega_0 = x,\omega_1 = y\right].
\end{align*}
Since the dynamic $\Schro$ bridge, $\SBdynam{\rho}{\eps}$ defined in \eqref{eq:dynSB-defn}, disintegrates as the Brownian bridge once its endpoints are fixed \cite[Proposition 2.3]{schroLeonard13}, we have that
\begin{align}\label{eq:rxyeps-bound}
    \Exp{\SBstatic{\rho}{\eps}{}}[R(X,Y,\eps)] &\leq \eps \Exp{\SBstatic{\rho}{\eps}{}}\left[\Exp{R_{\eps}}\left[\int_{0}^{1} \left(\cU(\omega_s)-\cU(\omega_0)\right)ds \middle| \; \omega_0 = X,\omega_1 = Y\right]\right]\\
    &= \eps \int_{0}^{1} \Exp{\SBdynam{\rho}{\eps}}\left[\cU(\omega_s)-\cU(\omega_0)\right]ds,
\end{align}
where the last equality follows from Fubini's Theorem. 

Recall that $(\rho_{t}^{\eps},v_{t}^{\eps})_{t \in [0,1]}$ is a weak solution for the continuity equation \eqref{eq:continuity-equation} if for all $\psi \in C_{c}^{1}(\mathbb{R}^{d})$ and $t \in [0,1]$
\begin{align*}
    \frac{d}{dt} \Exp{\rho_{t}^{\eps}}[\psi] &= \Exp{\rho_{t}^{\eps}}[\langle v_{t}^{\eps}, \nabla \psi \rangle].
\end{align*}
Integrating in time over $[0,s]$ for $s \in (0,1]$ gives
\begin{align*}
    \Exp{\rho_{s}^{\eps}}[\psi] - \Exp{\rho_{0}^{\eps}}[\psi] &= \int_{0}^{s} \Exp{\rho_{u}^{\eps}}[\langle v_{u}^{\eps}, \nabla \psi \rangle] du.
\end{align*}
We now use a density argument to extend the above identity to
\begin{align}\label{eq:cont-eqn-for-U}
    \Exp{\SBdynam{\rho}{\eps}}[\cU(\omega_s)-\cU(\omega_0)] = \Exp{\rho_{s}^{\eps}}[\cU] - \Exp{\rho_{0}^{\eps}}[\cU] &= \int_{0}^{s} \Exp{\rho_{u}^{\eps}}[\langle v_{u}^{\eps}, \nabla \cU \rangle]du.
\end{align}
For $R > 0$ let $\chi_R: \mathbb{R}^{d} \to \mathbb{R}$ be such that $\chi_R \in C_{c}^{\infty}(\mathbb{R}^{d})$, $0 \leq \chi_R \leq 1$, $\left.\chi_{R}\right|_{B(0,R)} \equiv 1$, and $\|\nabla \chi_R\|_{\infty} \leq 2$. As $\chi_{R} \cU \in C_{c}^{1}(\mathbb{R}^{d})$,
\begin{align*}
    \Exp{\SBdynam{\rho}{\eps}}[(\chi_{R}\cU)(\omega_s)-(\chi_{R}\cU)(\omega_0)] &= \int_{0}^{s} \Exp{\SBdynam{\rho}{\eps}}[\langle v_{u}^{\eps}(\omega_u), \nabla (\chi_{R}\cU)(\omega_u) \rangle]du \\
    &= \int_{0}^{s} \Exp{\SBdynam{\rho}{\eps}}\left[\langle v_u^{\eps},\nabla \cU \rangle \chi_{R}\right]ds + \int_{0}^{s} \Exp{\SBdynam{\rho}{\eps}}\left[\langle v_u^{\eps}, \nabla \chi_{R}\rangle \cU \right]ds.
\end{align*}
By Lemma \ref{lem:integ-entropic-inter}, $\int_{0}^{1} \Exp{\SBdynam{\rho}{\eps}}[\|\nabla \cU(\omega_s)\|^2]ds < +\infty$. Moreover, it is shown in \eqref{eq:integrated-velocity-bound} that $\int_{0}^{1} \Exp{\SBdynam{\rho}{\eps}}[\|v_s^{\eps}(\omega_s)\|^2] ds < +\infty$. Thus, by the Dominated Convegence Theorem
\begin{align*}
    \lim\limits_{R \uparrow +\infty} \int_{0}^{s} \Exp{\SBdynam{\rho}{\eps}}\left[\langle v_u^{\eps},\nabla \cU \rangle \chi_{R}\right]ds &= \int_{0}^{s} \Exp{\SBdynam{\rho}{\eps}}\left[\langle v_u^{\eps},\nabla \cU \rangle \right]ds.
\end{align*}
As $\abs{\chi_{R}\cU(\omega_s)+\chi_{R}\cU(\omega_0)} \leq \abs{\cU(\omega_s)}+\abs{\cU(\omega_0)}$ and 
\begin{align*}
    \abs{\langle v_u^{\eps}, \nabla \chi_{R}\rangle \cU} \leq 2\|v_{u}^{\eps}\|\abs{\cU} \leq \|v_{u}^{\eps}(\omega_u)\|^2+\abs{\cU(\omega_u)}^2
\end{align*}
and both upper bounds are integrable in their suitable spaces (again by Lemma \ref{lem:integ-entropic-inter} and \eqref{eq:integrated-velocity-bound}), \eqref{eq:cont-eqn-for-U} follows from dominated convergence by sending $R \uparrow +\infty$. 

Integrating \eqref{eq:cont-eqn-for-U} once more in $s$ over $[0,1]$, multiplying by $\eps$, and plugging in \eqref{eq:cont-eq-entropic-inter} gives the RHS of \eqref{eq:rxyeps-bound} as
\begin{align*}
    \eps \int_{0}^{1} \Exp{\SBdynam{\rho}{\eps}} [\cU(\omega_s)-\cU(\omega_0)] ds &= \eps^2 \int_{0}^{1} \int_{0}^{s}  \Exp{\rho_{u}^{\eps}}\left[\left\langle \frac{1}{\eps}v_{u}^{\eps},\nabla \cU\right\rangle\right]  duds.  
\end{align*}
Apply Cauchy-Schwarz on $L^{2}(C^{d}[0,1] \times [0,1], d\SBdynam{\rho}{\eps}\otimes du)$ to obtain for each $s \in [0,1]$
\begin{align*}
    \abs{-\int_{0}^{s}\Exp{\SBdynam{\rho}{\eps}}\left[\left\langle \frac{1}{\eps} v_{u}^{\eps}(\omega_u),\nabla \cU(\omega_u) \right\rangle\right]  du} &\leq \int_{0}^{1} \Exp{\SBdynam{\rho}{\eps}}\left[\frac{1}{\eps}\|v_{u}^{\eps}(\omega_u)\|\|\nabla \cU(\omega_u)\|\right] du \\
    &\leq \left(\int_{0}^{1} \frac{1}{\eps^2}\|v_{u}^{\eps}\|_{L^{2}(\rho_{u}^{\eps})}^{2} du \right)^{1/2}\left(\int_{0}^{1} \Exp{\rho_{u}^{\eps}}[\|\nabla \cU\|^2] du \right)^{1/2}
\end{align*}
Thus, altogether we have that
\begin{align}\label{eq:almost-bound-rxyeps}
    \Exp{\SBstatic{\rho}{\eps}{}}[R(X,Y,\eps)] &\leq \eps^2\left(\int_{0}^{1} \frac{1}{\eps^2}\|v_{t}^{\eps}\|_{L^{2}(\rho_{t}^{\eps})}^{2} dt\right)^{1/2}\left(\int_{0}^{1} \Exp{\rho_{t}^{\eps}}[\|\nabla \cU\|^2] dt \right)^{1/2}.
\end{align}

Recall the entropic Benamou Brenier expression for entropic cost given in \eqref{eq:symm-ent-cost-BB}. By the optimality of \eqref{eq:cont-eq-entropic-inter} and suboptimality of the constant curve $\rho_t = \rho$ for all $t \in [0,1]$,
\begin{align*}
    H(\SBstatic{\rho}{\eps}|\BMstatic{\rho}{\eps}) &= \frac{1}{2\eps} \int_{0}^{1} \|v_{t}^{\eps}\|_{L^{2}(\rho_t^{\eps})}^{2} dt + \frac{\eps}{8}\int_{0}^{1} I(\rho_{t}^{\eps})dt \leq \frac{\eps}{8}I(\rho).
\end{align*}
Rearranging terms gives
\begin{align}\label{eq:integrated-velocity-bound}
    \frac{1}{\eps^2} \int_{0}^{1} \|v_{t}^{\eps}\|_{L^{2}(\rho_t^\eps)}^2 dt \leq \frac{1}{4}\left(I(\rho)-\int_{0}^{1} I(\rho_t^{\eps}) dt \right).
\end{align}

With this inequality, \eqref{eq:almost-bound-rxyeps} becomes the desired
\begin{align*}
    \Exp{\SBstatic{\rho}{\eps}{}}[R(X,Y,\eps)] &\leq \frac{1}{2}\eps^2\left(I(\rho) - \int_{0}^{1} I(\rho_{t}^{\eps})dt\right)^{1/2}\left(\int_{0}^{1} \Exp{\rho_{t}^{\eps}}[\|\nabla \cU\|^2] dt \right)^{1/2}.
\end{align*}
Under Assumption \ref{assumption:LD_SB_relative_entropy} (2), Lemma \ref{lem:integ-entropic-inter} applies and gives that the rightmost constant has an upper bound once we consider $\eps \in (0,\eps_{0})$ for some $\eps_0 > 0$. Thus, the nonnegativity of the LHS of \eqref{eq:rel-ent-ld-sb} and Lemma \ref{lem:integrated-fisher-continuity} give the stated convergence as $\eps \downarrow 0$. 

\end{proof}

\subsection{Discussion}

\textbf{Integrated Fisher Information.} Under additional assumptions the integrated Fisher information along the entropic interpolation in \eqref{eq:rel-ent-ld-sb} can be replaced with a potentially more manageable quantity. In certain settings (for instance, when $\rho \in C_{c}^{\infty}(\mathbb{R}^{d})$ with finite entropy and Fisher information \cite[Lemma 3.2]{glrt-hwi-20}), there is a conserved quantity along the entropic interpolation called the energy, which we denote $\mathcal{E}_{\eps}(\rho)$. With $(v_{t}^{\eps},\rho_{t}^{\eps})_{t \in [0,1]}$ as defined in \eqref{eq:cont-eq-entropic-inter}, for any $t \in [0,1]$ 
\begin{align}\label{eq:energy-along-entropic-inter}
    \mathcal{E}_{\eps}(\rho) := \frac{1}{2\eps^2}\|v_{t}^{\eps}\|_{L^{2}(\rho_{t}^{\eps})}^{2} - \frac{1}{8}I(\rho_{t}^{\eps}).
\end{align}
Now, evaluate \eqref{eq:energy-along-entropic-inter} at $t = 1/2$. Then $v_{1/2}^{\eps} = 0$ and $\mathcal{E}_{\eps}(\rho) = - \frac{1}{8}I(\rho_{1/2}^{\eps})$. On the other hand, integrate \eqref{eq:energy-along-entropic-inter} over $t \in [0,1]$, this also gives $\mathcal{E}_{\eps}(\rho)$ and thus
\begin{align*}
    -\frac{1}{8}I(\rho_{1/2}^{\eps}) &= \frac{1}{2\eps^2}\int_{0}^{1}\|v_t^\eps\|_{L^{2}(\rho_{t}^{\eps})}^{2} dt - \frac{1}{8}\int_{0}^{1} I(\rho_{t}^{\eps}) dt.
\end{align*}
This in turn implies
\begin{align*}
    \frac{1}{\eps^2}\int_{0}^{1}\|v_t^\eps\|_{L^{2}(\rho_{t}^{\eps})}^{2} dt &= \frac{1}{4}\left(\int_{0}^{1} I(\rho_{t}^{\eps}) dt-I(\rho_{1/2}^{\eps})\right) \leq \frac{1}{4}\left(I(\rho) - I(\rho_{1/2}^{\eps})\right).
\end{align*}
Thus, assuming the conservation of energy holds under Assumption \ref{assumption:LD_SB_relative_entropy}, replacing the bound in \eqref{eq:integrated-velocity-bound} with the RHS of the above inequality rewrites Theorem \ref{thm:ld-sb-general-g} as
\begin{align}\label{eq:rel-ent-sb-ld-midpoint}
    H(\LDstatic{\rho}{\eps} | \SBstatic{\rho}{\eps})+H(\SBstatic{\rho}{\eps} | \LDstatic{\rho}{\eps}) \leq \frac{1}{2}\eps^2 \left(I(\rho) - I(\rho_{1/2}^{\eps})\right)^{1/2}\left(\int_{0}^{1} \Exp{\rho_t^\eps}\|\nabla \cU\|^2 dt\right)^{1/2}.
\end{align}
Alternatively, \eqref{eq:rel-ent-sb-ld-midpoint} also holds when $\rho$ is a univariate Gaussian, as explicit computations of the entropic interpolation in this case from \cite{gentil2017analogy} demonstrate that $I(\rho_{t}^{\eps})$ is minimized at $t = 1/2$. 


\textbf{$\Schro$ Cost Expansion.}
In the course of proving Theorem \ref{thm:ld-sb-general-g}, we recover a result in alignment with the expansion of $\Schro$ cost about $\eps = 0$ developed in \cite[Theorem 1.6]{conforti21deriv}. Under assumptions on bounded $\rho$ with compact support and finite entropy, \cite[Theorem 1.6]{conforti21deriv} states
\begin{align}\label{eq:ct-thm1-6}
    H(\SBstatic{\rho}{\eps}|\BMstatic{\rho}{\eps}) &= \frac{\eps}{8}I(\rho) + o(\eps). 
\end{align}
Now, from the Pythagorean Theorem of relative entropy \cite[Theorem 2.2]{csiszar75idiv}
\begin{align*}
    H(\LDstatic{\rho}{\eps}|\BMstatic{\rho}{\eps}) &\geq H(\LDstatic{\rho}{\eps}|\SBstatic{\rho}{\eps}) + H(\SBstatic{\rho}{\eps}|\BMstatic{\rho}{\eps}).
\end{align*}
Recall from Step 2 of Lemma \ref{thm:ld-sb-general-g} that $H(\LDstatic{\rho}{\eps}|\BMstatic{\rho}{\eps}) \leq \frac{\eps}{8}I(\rho)$, so by Theorem \ref{thm:ld-sb-general-g} we obtain the following expansion of an upper bound of $\Schro$ cost
\begin{align}\label{eq:CTextension}
    H(\SBstatic{\rho}{\eps}|\BMstatic{\rho}{\eps}) &\leq H(\LDstatic{\rho}{\eps}|\BMstatic{\rho}{\eps}) - H(\LDstatic{\rho}{\eps}|\SBstatic{\rho}{\eps}) = \frac{\eps}{8}I(\rho) - o(\eps^2). 
\end{align}
Thus, in the same-marginal case (and under different regularity assumptions) the second order term in the $\Schro$ expansion is zero.


\textbf{Convergence of Potentials.}
Revisiting the inequality in \eqref{eq:integrated-velocity-bound} and recalling \eqref{eq:cont-eq-entropic-inter}, observe that 
\begin{align*}
    \int_{0}^{1} \|\nabla \psi^{\eps}(t,\cdot)-\nabla\psi^{\eps}(1-t,\cdot)\|_{L^2(\rho_t^{\eps})}^2 dt \leq I(\rho)-\int_{0}^{1} I(\rho_t^{\eps}) dt. 
\end{align*}
By Lemma \ref{lem:integrated-fisher-continuity}, for Lebesgue a.e.\ $t \in [0,1]$ it holds that
\begin{align}\label{eq:pointwise-l2-entrop-limit}
    \lim\limits_{\eps \downarrow 0} \|\nabla \psi^{\eps}(t,\cdot)-\nabla\psi^{\eps}(1-t,\cdot)\|_{L^2(\rho_t^{\eps})}^2 = 0.
\end{align}
This fact hints in the direction of a result on the convergence of rescaled gradients of entropic potentials. Suppose that the convergence in \eqref{eq:pointwise-l2-entrop-limit} holds at $t = 1$ or $t= 0$ (of course, a priori there is no way to guarantee such convergence at a specific $t$). In this case,
\begin{align*}
    \nabla \psi^{\eps}(1,\cdot)-\nabla\psi(0,\cdot) &=  \nabla \log a^{\eps}-\nabla \log P_{\eps}a^{\eps} \\
    &= \nabla \log a^{\eps} - \nabla \log \frac{\rho}{a^{\eps}}  = \frac{2}{\eps} \nabla f_{\eps} + \nabla \log \rho.
\end{align*}
As $\rho_0^{\eps} = \rho_1^\eps = \rho$, \eqref{eq:pointwise-l2-entrop-limit} would imply that $\lim\limits_{\eps \downarrow 0} \frac{1}{\eps} \nabla f_{\eps} = -\frac{1}{2}\nabla \log \rho$ in $L^{2}(\rho)$. This would be in accordance with \cite[Theorem 1]{sander_22}, but now with fully supported densities. 



\section{One Step Analysis of $\Schro$ Bridge Scheme}\label{sec:one-step}
Fix $\rho \in \cP_{2}^{ac}(\mathbb{R}^{d})$ and recall the tangent space at $\rho$, denoted by $\Tan{\rho}$, and defined in \cite[Definition 8.4.1]{ambrosio2005gradient} by
\begin{align*}
    \Tan{\rho} := \overline{\{\nabla \varphi: \varphi \in C_{c}^{\infty}(\mathbb{R}^{d})\}}^{L^{2}(\rho)}.
\end{align*}
Now, consider a vector field $v=\nabla u \in \Tan{\rho}$, such that the surrogate measure \eqref{eq:surrogate} is defined. Our objective in this section is to lay down conditions under which the approximation stated below \eqref{sb-step} holds. More precisely, we show that 
\begin{equation}\label{eq:short_thm_2}
\lim_{\eps \downarrow 0}\frac{1}{\eps}\Was{2}(\SB{\rho}{\eps}{},(\Id + \eps v)_{\#}\rho)=0.
\end{equation}




In fact, we will prove the above theorem in a somewhat more general form when the tangent vector field may not even be of a gradient form. The general setting is as follows. Consider a collection of vectors fields of gradient type $(v_{\eps} = \nabla \psi_{\eps}, \eps > 0)$,
we wish to tightly approximate the pushforwards $(\Id+\eps v_{\eps})_{\#}\rho$ with $\SB{\rho}{\eps}{}$. As a shorthand, we write
\begin{align}\label{eq:S-eps}
    \opt{\eps}{1}{\rho} := (\Id + \eps v_{\eps})_{\#}\rho.
\end{align}
Suppose that for each $\eps > 0$ there exists $\theta_\eps \in \mathbb{R} \setminus \{0\}$ such that the surrogate measure defined in \eqref{eq:surrogate} exists. Set
\begin{align}\label{eq:simga-eps-onestep}
    \sigma_{\eps}(x) &= \exp\left(2\theta_{\eps} \psi_{\eps}(x)- \Lambda_0(\theta_{\eps})\right), \quad \text{where}\quad \Lambda_0(\theta_{\eps}):=\log \int e^{2\theta \psi_{\eps}(y)}dy.
\end{align}
Redefine one step in the SB scheme as
\begin{align}\label{sb-step-general}
    \tag{SB} \SB{\rho}{\eps}{} &= \left(\left(1-\theta_{\eps}^{-1}\right)\Id + \theta_{\eps}^{-1} \BPbase{\sigma_\eps}{\eps}\right)_{\#}\rho,
\end{align}
where $\BPbase{\sigma_\eps}{\eps}$ is the barycentric projection defined in \eqref{eq:bary-proj}.

In Theorem \ref{thm:one_step_convergence} we show that, under appropriate assumptions, 
\begin{equation}\label{eq:short_thm_2-general}
\lim_{\eps \downarrow 0}\frac{1}{\eps}\Was{2}(\SB{\rho}{\eps}{},\opt{\eps}{1}{\rho})=0.
\end{equation} 

There are multiple reasons why one might be interested in this general setup. Firstly, we know that, for any $v \in \Tan{\rho}$, there exists a sequence of tangent elements of the type $v_\eps=\nabla \psi_\eps$ such that $\lim_{\eps\rightarrow 0+} v_\eps=v$ in $\mathbf{L}^2(\rho)$. In particular, by an obvious coupling, 
\begin{equation}\label{eq:ltwoapprox}
\limsup_{\eps \rightarrow 0+}\frac{1}{\eps} \Was{2}( (\Id + \eps v_{\eps})_{\#}\rho ,(\Id + \eps v)_{\#}\rho)\le \lim_{\eps \rightarrow 0+}\norm{v_\eps - v}_{\mathbf{L}^2(\rho)}=0.
\end{equation}
But, of course, if $v$ is not of the gradient form, one cannot construct a surrogate measure as in \eqref{eq:surrogate} to run the Schr\"odinger bridge scheme. A natural remedy is to show that \eqref{eq:short_thm_2-general} holds under appropriate assumptions. Then, combined with \eqref{eq:ltwoapprox} we recover \eqref{eq:short_thm_2} even when $v$ is not a gradient. 

There is one more reason why one might use the above generalized scheme. As explained in \eqref{eq:explicit_euler_Intro}, we would like to iterate the Schr\"odinger bridge scheme to track the explicit Euler iterations of a Wasserstein gradient flow. However, the more natural discretization scheme of Wasserstein gradient flows is the implicit Euler or the JKO scheme. See \eqref{eq:IE} in Section \ref{sec:schro-bridge-scheme} for the details. In this case, however, the vector field $v_\eps$, which is the Kantorovich potential between two successive steps of the JKO scheme, changes with $\eps$. This necessitates our generalization. Hence, we continue with \eqref{eq:simga-eps-onestep} and the generalized \eqref{sb-step-general} scheme. 

The proof of \eqref{eq:short_thm_2-general} relies on the close approximation of the Langevin diffusion to the $\Schro$ bridge provided in Theorem \ref{thm:ld-sb-general-g}. To harness this approximation, we introduce an intermediatery scheme based on the Langevin diffusion, which we denote $\LD{\cdot}{\eps}{}$ and define in the following manner. Let $\eps > 0$ and consider the symmetric Langevin diffusion $\left( X_t,\; 0\le t \le \eps\right)$ with stationary density $\sigma_{\eps}$ as defined in \eqref{eq:simga-eps-onestep}. The drift function for this diffusion is $x \mapsto \frac{1}{2}\nabla \log \sigma_{\eps}(x)= \theta_{\eps}\nabla \psi_\eps(x)=\theta_{\eps}v_{\eps}(x)$. In analogy with the definition of $\BPbase{\sigma_{\eps}}{\eps}(\cdot)$ in \eqref{eq:bary-proj}, define $\LBPbase{\sigma_{\eps}}{\eps}(\cdot)$ by
\begin{align}
    \LBPbase{\sigma_{\eps}}{\eps}(x) &= \Exp{\LDstatic{\sigma_{\eps}}{\eps}}[Y|X=x].
\end{align}
Next, in analogy with the definition of $\SB{\cdot}{\eps}{}$ in \eqref{sb-step-general}, define the Langevin diffusion update
\begin{align}\label{eq:LD-update}
    \tag{LD} \LD{\rho}{\eps}{} &= \left(\left(1-\theta_\eps^{-1}\right)\Id + \theta_\eps^{-1}\LBPbase{\sigma_{\eps}}{\eps}\right)_{\#}\rho. 
\end{align}
In analogy with Section \ref{sec:sb-ld}, for $\eps > 0$ set
    \begin{align*}
        \cU_{\eps} &:= \frac{1}{8}\|\nabla \log \sigma_{\eps}\|^2 + \frac{1}{4}\Delta \log \sigma_{\eps}.
    \end{align*}
We now make the following assumptions on the collection of densities $(\sigma_\eps, \eps > 0)$ defined in \eqref{eq:simga-eps-onestep}.
\begin{assumption}\label{assumptions:sigma-eps-onestep}
For each $\eps > 0$, $\sigma_\eps$ as defined in \eqref{eq:simga-eps-onestep} satisfies Assumption \ref{assumption:LD_SB_relative_entropy}. Additionally
\begin{itemize}
    \item[(1)] There exists $\sigma_0 \in \cP_{2}^{ac}(\mathbb{R}^{d})$ such that $(\sigma_\eps, \eps > 0)$ converges weakly to $\sigma_0$ and $I(\sigma_\eps) \to I(\sigma_0)$ as $\eps \downarrow 0$. Moreover, Assumption \ref{assumption:LD_SB_relative_entropy} holds uniformly in $\eps$ in the following manner: there exists $\eps_0 > 0$ and $C,D > 0$, $N \geq 1$ such that for all $\eps \in (0,\eps_0)$, $\|\nabla \cU_{\eps}(x)\|^2 \leq C(1+\|x\|^{N})$ and for $X_{\eps} \sim \sigma_{\eps}$, $\|X_{\eps}\|_{\psi_{1}} \leq D$.
    \item[(2)] The $(\sigma_{\eps}, \eps \in (0,\eps_0))$ are uniformly semi log-concave, that is, there exists $\lambda \in \mathbb{R}$ such that for all $\eps \in (0,\eps_0)$, $- \nabla^{2}\log \sigma_{\eps} \geq \lambda \Id$. Additionally, $L := \limsup\limits_{\eps \downarrow 0} \Exp{\sigma_{\eps}}\|\nabla^2 \log \sigma_{\eps}\|_{HS}^{2} < +\infty$, where $\|.\|_{HS}$ is the Hilbert-Schmidt norm.
    \item[(3)] $\chi:=\limsup\limits_{\eps
     \downarrow 0} \|\rho/\sigma_{\eps
     }\|_{\infty} < +\infty$ and $\theta:=\liminf\limits_{\eps \downarrow 0} \abs{\theta_{\eps}} > 0$.
\end{itemize}
\end{assumption}

  
\begin{remark}\label{rmk:theta}
   In all our examples, $\theta_\eps$ is either always $+1$ or always $-1$. Hence, this parameter simply needs to have a positive lower bound (in absolute value) and it does not scale with $\eps$. 
\end{remark}

While these assumptions may appear stringent, they are indeed satisfied by a wide class of examples, see Section \ref{sec:examples}. In many of these examples, $\sigma_{\eps}$ takes the form $\sigma_{\eps} = \rho e^{V_{\eps}}$. Thus, condition (3) in Assumption \ref{assumptions:sigma-eps-onestep} is satisfied if $\liminf\limits_{\eps \downarrow 0} \inf\limits_{x\in \mathbb{R}^{d}} V_{\eps} > -\infty$. For instance, when using \eqref{sb-step} to approximate \eqref{eq:GA} for the gradient flow of entropy, $\sigma_{\eps} = \rho$ for all $\eps > 0$ and thus $V_{\eps} \equiv 0$. Similarly, for approximating the gradient flow of $H(\cdot | N(0,1))$ from certain starting measures, $V_{\eps}(x) = \frac{1}{2}x^2 + C$, where $C$ is a normalizing constant, for all $\eps > 0$. See Section \ref{sec:examples} for more detailed discussion.


We now provide a quantitative statement about the convergence stated in \eqref{eq:short_thm_2-general},
where we take care to note the dependence on constants appearing in Assumption \ref{assumptions:sigma-eps-onestep} as this upper bound must hold uniformly along iterations of this scheme in Section \ref{sec:schro-bridge-scheme}. Recall $\opt{\eps}{1}{\cdot}$ as defined in \eqref{eq:S-eps} and $\SB{\cdot}{\eps}{}$ as defined in \eqref{sb-step-general}.

\begin{theorem}\label{thm:one_step_convergence}
Under Assumption \ref{assumptions:sigma-eps-onestep}, there exists a constant $K > 0$ depending on $C,D,N,\lambda,L, \chi, \theta$,
such that for all $\eps \in (0,\eps_0)$
\begin{align*}
    \Was{2}(\SB{\rho}{\eps}{},\opt{\eps}{1}{\rho})&\leq
    K \eps\left[ \left(I(\sigma_{\eps})-\int_{0}^{1}I((\sigma_{\eps})^{\eps}_{s})ds\right)^{1/4} + \sqrt{\eps}\right].
\end{align*}  


In particular,
\begin{align*}
    \lim\limits_{\eps \downarrow 0} \frac{1}{\eps}\Was{2}(\SB{\rho}{\eps}{},\opt{\eps}{1}{\rho})=0. 
\end{align*}
\end{theorem}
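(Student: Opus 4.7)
The plan is to introduce the intermediate Langevin scheme $\LD{\rho}{\eps}{}$ from \eqref{eq:LD-update} and split the distance via the triangle inequality
\begin{align*}
    \Was{2}(\SB{\rho}{\eps}{},\opt{\eps}{1}{\rho}) &\leq \Was{2}(\SB{\rho}{\eps}{},\LD{\rho}{\eps}{}) + \Was{2}(\LD{\rho}{\eps}{},\opt{\eps}{1}{\rho}).
\end{align*}
The first summand will carry the $\eps(I(\sigma_\eps) - \int_0^1 I((\sigma_\eps)^\eps_s)\,ds)^{1/4}$ contribution via Theorem \ref{thm:ld-sb-general-g}, while the second will produce the $\eps\sqrt{\eps}$ contribution via an It\^o--Taylor expansion of the Langevin transition.

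For $\Was{2}(\SB{\rho}{\eps}{},\LD{\rho}{\eps}{})$, I couple by pushing the same $x \sim \rho$ through both maps. Since the two maps differ only by the replacement $\BPbase{\sigma_\eps}{\eps} \leftrightarrow \LBPbase{\sigma_\eps}{\eps}$ and $\|\rho/\sigma_\eps\|_\infty \leq \chi$,
\begin{align*}
    \Was{2}^2(\SB{\rho}{\eps}{},\LD{\rho}{\eps}{}) &\leq \theta_\eps^{-2}\chi \int \|\BPbase{\sigma_\eps}{\eps}(x) - \LBPbase{\sigma_\eps}{\eps}(x)\|^2\,d\sigma_\eps(x).
\end{align*}
Disintegrating $\SBstatic{\sigma_\eps}{\eps}$ and $\LDstatic{\sigma_\eps}{\eps}$ along the common first marginal $\sigma_\eps$, Jensen's inequality identifies the integrand with $\Was{2}^2(\pi_x^{SB},\pi_x^{LD})$, where $\pi_x^{SB}$ and $\pi_x^{LD}$ are the $(Y\mid X=x)$ conditionals under the two couplings. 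From the explicit representation of the Langevin transition in \eqref{eq:LD-density} and the lower bound on $\cU_\eps$ provided by Assumption \ref{assumption:LD_SB_relative_entropy}, the conditional $\pi_x^{LD}$ is a bounded perturbation of the Gaussian $N(x,\eps I)$, and hence by Holley--Stroock it satisfies a Talagrand $T_2$ inequality with constant of order $\eps$, uniformly in $x$ and in $\eps \in (0,\eps_0)$. Combined with the chain rule for relative entropy and Theorem \ref{thm:ld-sb-general-g} applied to $\sigma_\eps$ (using Assumption \ref{assumptions:sigma-eps-onestep}(1) to bound $\int\Exp{(\sigma_\eps)^\eps_s}\|\nabla\cU_\eps\|^2\,ds$ uniformly), this yields
\begin{align*}
    \int \Was{2}^2(\pi_x^{SB},\pi_x^{LD})\,d\sigma_\eps(x) \leq C\eps\, H(\SBstatic{\sigma_\eps}{\eps}\mid \LDstatic{\sigma_\eps}{\eps}) \leq C'\eps^{3}\left(I(\sigma_\eps) - \int_0^1 I((\sigma_\eps)^\eps_s)\,ds\right)^{1/2}.
\end{align*}
Taking square roots and absorbing one factor of $\eps^{1/2}$ produces the first term of the claimed bound.

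For $\Was{2}(\LD{\rho}{\eps}{},\opt{\eps}{1}{\rho})$, the same common coupling and change of measure reduce matters to bounding $\|\LBPbase{\sigma_\eps}{\eps}(x) - x - \eps\theta_\eps v_\eps(x)\|_{L^2(\sigma_\eps)}$. Writing $(Y_t)$ for the Langevin diffusion with stationary law $\sigma_\eps$ and drift $\theta_\eps v_\eps$, we have $\LBPbase{\sigma_\eps}{\eps}(x) = x + \theta_\eps\int_0^\eps \Exp{}[v_\eps(Y_s)\mid Y_0 = x]\,ds$, and It\^o's formula applied componentwise to $v_\eps$ expands this into $x + \eps\theta_\eps v_\eps(x) + O(\eps^2)$ pointwise in $x$. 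Assumption \ref{assumptions:sigma-eps-onestep}(1)--(2), via uniform polynomial-growth control on $\nabla v_\eps$ and $\nabla^2 v_\eps$ obtained from uniform semi-log-concavity and the uniform $L^2(\sigma_\eps)$ bound on $\nabla^2\log\sigma_\eps$, together with the uniform subexponential tails of $\sigma_\eps$, upgrades the remainder to $O(\eps^2)$ in $L^2(\sigma_\eps)$. Hence $\Was{2}(\LD{\rho}{\eps}{},\opt{\eps}{1}{\rho}) = O(\eps^2) \leq K\eps\sqrt{\eps}$ for $\eps \leq 1$, giving the second term.

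The main technical obstacle is the uniform-in-$\eps$ Talagrand inequality for $\pi_x^{LD}$: transferring the $T_2$ inequality at scale $\eps$ from $N(x,\eps I)$ to the Langevin transition density through the Holley--Stroock perturbation provided by $\cU_\eps$, with constants that do not blow up as $\eps \downarrow 0$ or as $\|x\|$ grows. A secondary difficulty is the uniform control of the It\^o--Taylor remainder in the second step. The concluding statement $\frac{1}{\eps}\Was{2}(\SB{\rho}{\eps}{},\opt{\eps}{1}{\rho}) \to 0$ then follows since Assumption \ref{assumptions:sigma-eps-onestep}(1), combined with the weak convergence $\sigma_\eps \to \sigma_0$ and an extension of Lemma \ref{lem:integrated-fisher-continuity} allowing the base marginal itself to vary with $\eps$, yields $I(\sigma_\eps) - \int_0^1 I((\sigma_\eps)^\eps_s)\,ds \to 0$.
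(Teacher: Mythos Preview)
Your overall architecture---triangle inequality through the intermediate Langevin scheme $\LD{\rho}{\eps}{}$, with the first leg controlled via a Talagrand inequality applied to conditional relative entropy and the second leg via a short-time expansion of the Langevin transition---is exactly the paper's structure. The difference lies in how each leg is executed, and your first leg has a genuine gap.

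For $\Was{2}(\SB{\rho}{\eps}{},\LD{\rho}{\eps}{})$, your Holley--Stroock argument does not go through. The Langevin conditional density $q_\eps(x,\cdot)$ differs from $N(x,\eps I)$ by the multiplicative factor $\sqrt{\rho(y)/\rho(x)}\exp(-c(x,y,\eps))$, whose logarithm contains $-\tfrac12 g(y)$; since $g(y)\to+\infty$ as $\|y\|\to\infty$ under Assumption~\ref{assumption:LD_SB_relative_entropy}, this perturbation has infinite oscillation and Holley--Stroock yields nothing. The paper instead invokes \cite[Theorem 1.8(3)]{cattiaux-guillin-slc-14}, which directly furnishes a $T_2$ inequality for the transition kernel of any $\lambda$-semi-log-concave Langevin diffusion, with a constant $\alpha$ depending only on $\lambda$ and the time horizon---crucially, $\alpha$ is $O(1)$ as $\eps\downarrow 0$, not $O(\eps)$. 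This weaker constant is already enough: combined with Theorem~\ref{thm:ld-sb-general-g} one gets $\Was{2}^2\leq\alpha\theta_\eps^{-2}\chi\cdot\tfrac{K}{2}\eps^2\bigl(I(\sigma_\eps)-\int_0^1 I((\sigma_\eps)^\eps_s)\,ds\bigr)^{1/2}$, and taking square roots produces the $\eps(\cdot)^{1/4}$ term directly, with no need to ``absorb'' a stray $\sqrt{\eps}$.

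For the second leg, your It\^o--Taylor expansion is morally the same as the paper's argument, but the paper avoids third derivatives of $\log\sigma_\eps$ (which your control of $L v_\eps$ would require and which are not directly covered by Assumption~\ref{assumptions:sigma-eps-onestep}) by using the Dirichlet-form bound $\|G_s f - f\|_{L^2(\sigma_\eps)}^2\leq 2s\|\nabla f\|_{L^2(\sigma_\eps)}^2$ from \cite[(4.2.3)]{bgl-markov}. Applied to $f=\nabla\psi_\eps$ this needs only $\Exp{\sigma_\eps}\|\nabla^2\log\sigma_\eps\|_{HS}^2\leq L$, and yields $\Was{2}^2(\LD{\rho}{\eps}{},\opt{\eps}{1}{\rho})\leq 2\theta_\eps^{-2}\chi L\,\eps^3$, i.e.\ $\Was{2}=O(\eps^{3/2})$, matching the $\sqrt{\eps}$ term in the statement.
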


Hence, from the discussion following \eqref{eq:ltwoapprox} we immediately obtain:

\begin{corollary}\label{lem:general-v-approx}
Let $v \in \Tan{\rho}$, and let $(\nabla \psi_{\eps}, \eps > 0) \subset C^{\infty}(\mathbb{R}^{d}) \cap \Tan{\rho}$ be such that $\lim\limits_{\eps \to 0+} \|\nabla \psi_{\eps}-v\|_{\mathbf{L}^2(\rho)} =0$ and the corresponding surrogate measures $(\sigma_{\eps} > 0)$ satisfy Assumption \ref{assumptions:sigma-eps-onestep}. Then
\begin{align*}
    \lim\limits_{\eps \downarrow 0+} \frac{1}{\eps}\Was{2}((\Id+\eps v)_{\#}\rho,\SB{\rho}{\eps}{}) = 0.
\end{align*}
\end{corollary}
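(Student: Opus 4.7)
The plan is to obtain the corollary as a straightforward consequence of Theorem \ref{thm:one_step_convergence} combined with the $L^{2}(\rho)$--approximation hypothesis on $\nabla \psi_{\eps}$, glued together by the triangle inequality for $\Was{2}$. Concretely, I would insert the measure $\opt{\eps}{1}{\rho} = (\Id + \eps \nabla \psi_{\eps})_{\#}\rho$ as an intermediate point and write
\begin{align*}
    \Was{2}\bigl((\Id+\eps v)_{\#}\rho,\SB{\rho}{\eps}{}\bigr) \leq \Was{2}\bigl((\Id+\eps v)_{\#}\rho,\opt{\eps}{1}{\rho}\bigr) + \Was{2}\bigl(\opt{\eps}{1}{\rho},\SB{\rho}{\eps}{}\bigr).
\end{align*}

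For the first term on the right, I would use the obvious coupling that pushes $\rho$ forward by both $\Id + \eps v$ and $\Id + \eps \nabla \psi_{\eps}$ simultaneously; this is exactly the computation recorded in \eqref{eq:ltwoapprox} and yields
\begin{align*}
    \frac{1}{\eps}\Was{2}\bigl((\Id+\eps v)_{\#}\rho,(\Id+\eps \nabla\psi_{\eps})_{\#}\rho\bigr) \leq \|\nabla \psi_{\eps} - v\|_{\mathbf{L}^2(\rho)},
\end{align*}
which tends to zero by hypothesis. For the second term, since the surrogate measures $(\sigma_{\eps})$ satisfy Assumption \ref{assumptions:sigma-eps-onestep} by hypothesis, Theorem \ref{thm:one_step_convergence} applies directly to give $\eps^{-1}\Was{2}(\opt{\eps}{1}{\rho},\SB{\rho}{\eps}{}) \to 0$ as $\eps \downarrow 0$.

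Dividing the triangle inequality by $\eps$ and passing to the limit concludes the argument. There is no real technical obstacle here since all the work has already been done in Theorem \ref{thm:one_step_convergence}; the only point that requires any care is checking that the elementary coupling bound \eqref{eq:ltwoapprox} is applicable, which is immediate because $(\Id + \eps v, \Id + \eps \nabla \psi_{\eps})_{\#}\rho \in \Pi\bigl((\Id+\eps v)_{\#}\rho,(\Id+\eps \nabla \psi_{\eps})_{\#}\rho\bigr)$ and has squared transport cost $\eps^{2}\|\nabla \psi_{\eps} - v\|_{\mathbf{L}^{2}(\rho)}^{2}$.
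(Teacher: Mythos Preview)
Your proposal is correct and matches the paper's own argument essentially verbatim: the paper derives the corollary ``from the discussion following \eqref{eq:ltwoapprox}'', i.e., by the same triangle inequality with intermediate point $\opt{\eps}{1}{\rho}=(\Id+\eps\nabla\psi_{\eps})_{\#}\rho$, applying Theorem \ref{thm:one_step_convergence} to one piece and the coupling bound \eqref{eq:ltwoapprox} to the other.
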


\begin{proof}[Proof of Theorem \ref{thm:one_step_convergence}]
This result follows from Lemmas \ref{lem:BP_upperbounds}, \ref{lem:bound-preserved-eps}, and \ref{lem:LD_BP_covergence} below. Lemmas \ref{lem:BP_upperbounds} and \ref{lem:bound-preserved-eps} together prove an upper bound on $\Was{2}(\SB{\rho}{\eps}{},\LD{\rho}{\eps}{})$ that show, in particular,
\[
\lim\limits_{\eps \downarrow 0} \frac{1}{\eps}\Was{2}(\SB{\rho}{\eps}{},\LD{\rho}{\eps}{})=0,
\]
and Lemma \ref{lem:LD_BP_covergence} proves an upper bound on $\Was{2}(\LD{\rho}{\eps}{},\opt{\eps}{1}{\rho})$ which implies 
\[
\lim\limits_{\eps \downarrow 0} \frac{1}{\eps}\Was{2}(\LD{\rho}{\eps}{},\opt{\eps}{1}{\rho})=0.
\]
That is, the three updates $\SB{\rho}{\eps}{}$, $\LD{\rho}{\eps}{}$, and $\opt{\eps}{1}{\rho}$ are within $o(\eps)$ of each other in the Wasserstein-2 metric. Theorem \ref{thm:one_step_convergence} now follows from the triangle inequality and simplifying the choice of the constant $K$.
\end{proof}


We begin by converting the relative entropy decay established in Theorem \ref{thm:ld-sb-general-g} between the $\Schro$ bridge and Langevin diffusion into a decay rate between their respective barycentric projections in the Wasserstein-2 metric. The essential tool for this conversion is a Talagrand inequality from \cite[Theorem 1.8(3)]{cattiaux-guillin-slc-14}.

\begin{lemma}\label{lem:BP_upperbounds}
Let $\eps > 0$. Under Assumption \ref{assumptions:sigma-eps-onestep}, there exists a constant $\alpha>0$ depending on $\eps$ and $\lambda$ such that
\begin{align*}
    \Was{2}^{2}((\BPbase{\sigma_{\eps}}{\eps})_{\#}\rho,(\LBPbase{\sigma_{\eps}}{\eps})_{\#}\rho) &\leq \alpha \|\rho/\sigma_{\eps}\|_{\infty}H(\SBstatic{\sigma_{\eps}}{\eps}| \LDstatic{\sigma_{\eps}}{\eps}) \\
    \Was{2}^{2}(\SB{\rho}{\eps}{},\LD{\rho}{\eps}{}) &\leq \alpha\theta_{\eps}^{-2} \|\rho/\sigma_{\eps}\|_{\infty}H(\SBstatic{\sigma_{\eps}}{\eps}| \LDstatic{\sigma_{\eps}}{\eps}).
\end{align*}
Moreover, with $\alpha$ can be chosen to be increasing in $\eps$. 
\end{lemma}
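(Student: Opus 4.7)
The plan is a four-step reduction from the Wasserstein distance between pushforwards of $\rho$ down to the relative entropy between the two joint laws on $\mathbb{R}^{d}\times \mathbb{R}^{d}$. The four steps are: (1) dominate each Wasserstein distance by the $L^{2}(\rho)$ norm of the difference of the maps via the Monge coupling, (2) change density from $\rho$ to $\sigma_\eps$ using $\|\rho/\sigma_\eps\|_\infty$, (3) bound the difference of barycentric projections pointwise by the Wasserstein distance between conditional laws, and (4) apply the Talagrand $T_2$ inequality to the Langevin transition kernel together with the disintegration of relative entropy.

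Concretely, I would begin with the standard bound $\Was{2}^{2}((T_1)_\#\rho,(T_2)_\#\rho)\leq \int \|T_1-T_2\|^{2}d\rho$, applied to the maps $\BPbase{\sigma_\eps}{\eps},\LBPbase{\sigma_\eps}{\eps}$ for the first claim and to the affinely rescaled maps in \eqref{sb-step-general} and \eqref{eq:LD-update} for the second claim (this is what produces the $\theta_\eps^{-2}$ factor). A change of density then gives
\begin{align*}
    \int \|\BPbase{\sigma_\eps}{\eps}-\LBPbase{\sigma_\eps}{\eps}\|^{2}d\rho \leq \|\rho/\sigma_\eps\|_\infty \int \|\BPbase{\sigma_\eps}{\eps}-\LBPbase{\sigma_\eps}{\eps}\|^{2}d\sigma_\eps.
\end{align*}
Since each barycentric projection is a conditional expectation under a joint law whose first marginal is $\sigma_\eps$, taking the optimal coupling of the conditionals $\SBstatic{\sigma_\eps}{\eps}(\cdot|X=x)$ and $\LDstatic{\sigma_\eps}{\eps}(\cdot|X=x)$ and invoking Jensen's inequality yields, for every $x$,
\begin{align*}
    \|\BPbase{\sigma_\eps}{\eps}(x)-\LBPbase{\sigma_\eps}{\eps}(x)\|^{2} \leq \Was{2}^{2}\bigl(\SBstatic{\sigma_\eps}{\eps}(\cdot|x),\LDstatic{\sigma_\eps}{\eps}(\cdot|x)\bigr).
\end{align*}

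For the third step, I would apply the Talagrand $T_2$ inequality for semi log-concave measures \cite[Theorem 1.8(3)]{cattiaux-guillin-slc-14} to the Langevin transition kernel $q_\eps(x,\cdot)=\LDstatic{\sigma_\eps}{\eps}(\cdot|X=x)$. The uniform $\lambda$-semi log-concavity of $\sigma_\eps$ in Assumption~\ref{assumptions:sigma-eps-onestep}(2) should produce a constant $\alpha=\alpha(\eps,\lambda)$, uniform in $x$ and monotone increasing in $\eps$, with $\Was{2}^{2}(\SBstatic{\sigma_\eps}{\eps}(\cdot|x),\LDstatic{\sigma_\eps}{\eps}(\cdot|x))\leq \alpha H(\SBstatic{\sigma_\eps}{\eps}(\cdot|x)|\LDstatic{\sigma_\eps}{\eps}(\cdot|x))$. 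Integrating against $\sigma_\eps(dx)$ and invoking the chain rule for relative entropy (permitted because both joint laws share the marginal $\sigma_\eps$) gives
\begin{align*}
    \int \|\BPbase{\sigma_\eps}{\eps}-\LBPbase{\sigma_\eps}{\eps}\|^{2}d\sigma_\eps \leq \alpha H\bigl(\SBstatic{\sigma_\eps}{\eps}\bigm|\LDstatic{\sigma_\eps}{\eps}\bigr),
\end{align*}
whence both claimed inequalities follow after combining with steps (1)--(2).

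The principal obstacle is extracting the Talagrand constant $\alpha(\eps,\lambda)$ with the stated uniformity and monotonicity. One must verify that the Langevin transition kernel inherits a $T_2$ inequality with a constant depending only on $\eps$ and $\lambda$ (and not on any finer quantity associated to $\sigma_\eps$), and that this constant is increasing in $\eps$, which is consistent with the intuition that longer-time kernels are more diffuse and thus exhibit weaker concentration. This is exactly what the cited Cattiaux-Guillin result should furnish, with the monotonicity coming from inspection of the explicit form of their constant.
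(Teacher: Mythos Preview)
Your proposal is correct and follows essentially the same route as the paper's proof: both use the Monge coupling to reduce to an $L^{2}$ bound on the difference of the barycentric maps, Jensen's inequality to control that difference pointwise by the $\Was{2}$ distance between the conditional laws, the Cattiaux--Guillin Talagrand inequality \cite[Theorem 1.8(3)]{cattiaux-guillin-slc-14} for the Langevin transition kernel $q_\eps(x,\cdot)$, and the chain rule for relative entropy (exploiting that both joints have first marginal $\sigma_\eps$). The only cosmetic difference is that the paper performs the change of measure from $\rho$ to $\sigma_\eps$ \emph{after} applying Talagrand rather than before, which is immaterial.
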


Recall that $\theta_{\eps} \in \mathbb{R} \setminus \{0\}$ is a constant chosen for integrability reasons outlined in \eqref{eq:simga-eps-onestep}-- it is simply an $O(1)$ term as $\eps \downarrow 0$. See Remark \ref{rmk:theta}.

\begin{proof}[Proof of Lemma \ref{lem:BP_upperbounds}]
Fix $T > 0$ and $\eps \in (0,T)$. To avoid unnecessary subscripts, set $\sigma := \sigma_{\eps}$. To start, we produce a coupling of $\LDstatic{\sigma}{\eps}$ and $\SBstatic{\sigma}{\eps}$ by applying a gluing argument (as in \cite[Lemma 7.6]{villani2021topics}, for example) to the optimal couplings between corresponding transition densities for for the Langevin diffusion and $\eps$-static $\Schro$ bridge. From this coupling, we obtain a coupling of the desired barycentric projections of $\LDstatic{\sigma}{\eps}$ and $\SBstatic{\sigma}{\eps}$. We then use the chain rule of relative entropy alongside the Talagrand inequality stated in \cite[Theorem 1.8(3)]{cattiaux-guillin-slc-14} to obtain the inequality stated in the Lemma. 

Let $(q_{t}(\cdot,\cdot),t \geq 0)$ denote the transition densities of the stationary Langevin diffusion associated to $\sigma$. By \cite[Theorem 1.8(3)]{cattiaux-guillin-slc-14}, there a constant $\alpha>0$ depending on $\lambda$ and $T$ such that following Talagrand inequality holds for each $x \in \mathbb{R}^{d}$
\begin{align*}
    \Was{2}^{2}(\eta,q_{\eps}(x,\cdot)) &\leq \alpha H(\eta |q_{\eps}(x,\cdot)) \text{ for all $\eta \in \cP(\mathbb{R}^{d})$.}
\end{align*}
Let $(p(x,\cdot), x \in \mathbb{R}^{d})$ denote the conditional densities of the $\eps$-static $\Schro$ bridge with marginals $\sigma$. Construct a triplet of $\mathbb{R}^{d}$-valued random variables $(X,Y,Z)$ in the following way. Let $X \sim \rho$, and let the conditional measures $(Y|X=x,Z|X=x)$ have the law of the quadratic cost optimal coupling between $p(x,\cdot)$ and $q_{\eps}(x,\cdot)$. Denote the law of $(X,Y,Z)$ on $\mathbb{R}^{d} \times \mathbb{R}^{d} \times \mathbb{R}^{d}$ by $\nu$. 
Observe that $(\Exp{v}[Y|X],\Exp{v}[Z|X])$ is a coupling of $(\LBPbase{\sigma_{\eps}}{\eps})_{\#}\rho$ and $(\BPbase{\sigma_{\eps}}{\eps})_{\#}\rho$. Moreover, observe that $((1-\theta_{\eps}^{-1})X+\theta_{\eps}^{-1}\Exp{v}[Y|X],(1-\theta_{\eps}^{-1})X+\theta_{\eps}^{-1}\Exp{v}[Z|X])$ is a coupling of $\SB{\rho}{\eps}{}$ and $\LD{\rho}{\eps}{}$. Thus, to compute an upper bound of $\Was{2}^{2}(\SB{\rho}{\eps}{},\LD{\rho}{\eps}{})$, it is sufficient to compute $\Exp{}\|\Exp{v}[Y|X]-\Exp{v}[Z|X]\|^2$.

We first observe for each $x \in \mathbb{R}^{d}$ that by two applications of the (conditional) Jensen's inequality, the optimality of $(Y|X=x,Z|X=x)$, and the aforementioned Talagrand inequality 
\begin{align*}
    \|\Exp{v}[Y|X=x]-\Exp{v}[Z|X=x]\| 
    &\leq \Was{2}(q_{\eps}(x,\cdot),p(x,\cdot)) \leq \sqrt{\alpha H(p(x,\cdot) |q_{\eps}(x,\cdot))}.
\end{align*}
Hence, using the chain rule of relative entropy and the fact that the Langevin diffusion and $\Schro$ bridge both start from $\sigma$
\begin{align*}
    \Was{2}^{2}&((\BPbase{\sigma}{\eps})_{\#}\rho,(\LBPbase{\sigma}{\eps})_{\#}\rho) \leq \Exp{v}\|\Exp{v}[Y|X=x]-\Exp{v}[Z|X=x]\|^2 \\
    &\leq \int \alpha H(p(x,\cdot)|q_{\eps}(x,\cdot)) \rho(x)dx = \int \alpha H(p(x,\cdot)|q_{\eps}(x,\cdot)) \sigma(x)\frac{\rho(x)}{\sigma(x)}dx\\
    &\leq \alpha \|\rho/\sigma_{\eps}\|_{\infty}H(\SBstatic{\sigma_{\eps}}{\eps}{}|\LDstatic{\sigma_{\eps}}{\eps}{}).
\end{align*}
\end{proof}

Now, we revisit the bound from Theorem \ref{thm:ld-sb-general-g} and apply it to Lemma \ref{lem:LD_BP_covergence}.
\begin{lemma}\label{lem:bound-preserved-eps}
    Under Assumption \ref{assumptions:sigma-eps-onestep}, there exists a constant $K>0$ depending on $N$, $C$, and $D$
    such that for all $\eps \in (0,\eps_0)$ 
    \begin{align}\label{eq:bound-preserved-eps}
        H(\SBstatic{\sigma_{\eps}}{\eps}|\LDstatic{\sigma_{\eps}}{\eps})+H(\LDstatic{\sigma_{\eps}}{\eps}|\SBstatic{\sigma_{\eps}}{\eps}) \leq \frac{\eps^2}{2}K\left(I(\sigma_{\eps})-\int_{0}^{1} I((\sigma_{\eps
    })_{s}^{\eps}) ds \right)^{1/2}  
    \end{align}
    In particular,  
    \[\lim\limits_{\eps \downarrow 0} \frac{1}{\eps^2}\left(H(\SBstatic{\sigma_{\eps}}{\eps}|\LDstatic{\sigma_{\eps}}{\eps})+H(\LDstatic{\sigma_{\eps}}{\eps}|\SBstatic{\sigma_{\eps}}{\eps})\right) = 0.\]
\end{lemma}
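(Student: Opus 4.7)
The plan is to reduce \eqref{eq:bound-preserved-eps} directly to Theorem \ref{thm:ld-sb-general-g} applied with the varying marginal $\sigma_\eps$, which satisfies Assumption \ref{assumption:LD_SB_relative_entropy} by the first part of Assumption \ref{assumptions:sigma-eps-onestep}. That theorem yields, for every $\eps \in (0,\eps_0)$,
\begin{equation*}
H(\LDstatic{\sigma_\eps}{\eps}|\SBstatic{\sigma_\eps}{\eps})+H(\SBstatic{\sigma_\eps}{\eps}|\LDstatic{\sigma_\eps}{\eps})
\le \tfrac{1}{2}\eps^{2}\Bigl(I(\sigma_\eps)-\int_0^1 I((\sigma_\eps)_t^\eps)\,dt\Bigr)^{1/2}\Bigl(\int_0^1 \Exp{(\sigma_\eps)_t^\eps}\|\nabla \cU_\eps\|^{2}\,dt\Bigr)^{1/2}.
\end{equation*}
So \eqref{eq:bound-preserved-eps} reduces to producing a constant $K^{2}$, depending only on $N$, $C$, $D$, that bounds the second factor uniformly in $\eps \in (0,\eps_0)$.

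The key step is to upgrade the argument in Lemma \ref{lem:integ-entropic-inter} to a uniform-in-$\eps$ statement. Using the representation \eqref{eq:rv-entropic-inter} applied to the Schr\"odinger bridge of $\sigma_\eps$, any $X_t^\eps \sim (\sigma_\eps)_t^\eps$ is of the form $(1-t)X_\eps + t Y_\eps + \sqrt{\eps t(1-t)}Z$ with $(X_\eps,Y_\eps)\sim \SBstatic{\sigma_\eps}{\eps}$, $Z\sim N(0,\Id)$ independent. The triangle inequality for $\|\cdot\|_{\psi_1}$, the uniform bound $\|X_\eps\|_{\psi_1}\le D$ from Assumption \ref{assumptions:sigma-eps-onestep}(1), and the universal sub-exponential norm of the Gaussian give
\begin{equation*}
\sup_{\eps \in (0,\eps_0),\; t \in [0,1]} \|X_t^\eps\|_{\psi_1} \le 2D+\sqrt{\eps_0}\,\|Z\|_{\psi_1} =: D'.
\end{equation*}
By standard sub-exponential moment control, $\sup_{\eps,t}\Exp{}\|X_t^\eps\|^{N}\le M_N(D')$ for a constant depending only on $N,D'$. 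Combined with the uniform polynomial growth $\|\nabla \cU_\eps(x)\|^{2}\le C(1+\|x\|^{N})$, this gives
\begin{equation*}
\sup_{\eps \in (0,\eps_0)}\int_0^1 \Exp{(\sigma_\eps)_t^\eps}\|\nabla \cU_\eps\|^{2}\,dt \le C(1+M_N(D')) =: K^{2},
\end{equation*}
which establishes \eqref{eq:bound-preserved-eps} with $K$ depending only on $N$, $C$, $D$.

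For the limit statement, it suffices to show $I(\sigma_\eps)-\int_0^1 I((\sigma_\eps)_t^\eps)dt \to 0$ as $\eps\downarrow 0$, mirroring Lemma \ref{lem:integrated-fisher-continuity} but now with $\sigma_\eps$ itself varying. The upper bound comes from the entropic Benamou--Brenier inequality \eqref{eq:symm-ent-cost-BB} applied to $\sigma_\eps$, which gives $\int_0^1 I((\sigma_\eps)_t^\eps)dt \le I(\sigma_\eps)$, and Assumption \ref{assumptions:sigma-eps-onestep}(1) says $I(\sigma_\eps)\to I(\sigma_0)$. For the matching lower bound I would combine weak convergence $\sigma_\eps \Rightarrow \sigma_0$ with \cite[Theorem 3.7(3)]{leo-sb-to-kp12} to conclude $(\sigma_\eps)_t^\eps \Rightarrow \sigma_0$ for each $t$, then invoke lower semicontinuity of Fisher information and Fatou to deduce $I(\sigma_0) \le \liminf_{\eps\downarrow 0}\int_0^1 I((\sigma_\eps)_t^\eps)dt$. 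The main obstacle is this last step: the convergence of entropic interpolations cited above is stated for a fixed marginal, so a careful extension to varying but weakly convergent marginals is required—either by a diagonal/Gamma-convergence argument for the entropic cost along $\sigma_\eps$, or by directly showing $\Was{2}(\SBstatic{\sigma_\eps}{\eps},\sigma_0\otimes \sigma_0 \text{ along the diagonal})\to 0$ using the upper bound $H(\SBstatic{\sigma_\eps}{\eps}|\BMstatic{\sigma_\eps}{\eps})\le \frac{\eps}{8}I(\sigma_\eps)$ together with a Talagrand inequality of the kind used in Lemma \ref{lem:BP_upperbounds}. Once this extension is in hand, sandwiching gives $\int_0^1 I((\sigma_\eps)_t^\eps)dt \to I(\sigma_0)$ and the stated limit follows.
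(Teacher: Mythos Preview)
Your argument for the inequality \eqref{eq:bound-preserved-eps} is essentially identical to the paper's: apply Theorem \ref{thm:ld-sb-general-g} to $\sigma_\eps$ and bound the factor $\int_0^1 \Exp{(\sigma_\eps)_t^\eps}\|\nabla \cU_\eps\|^2\,dt$ uniformly via the subexponential moment control of Lemma \ref{lem:integ-entropic-inter}, now uniform in $\eps$ thanks to Assumption \ref{assumptions:sigma-eps-onestep}(1). This part is fine.

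For the limit statement you have correctly isolated the only genuine difficulty---proving $(\sigma_\eps)_t^\eps \Rightarrow \sigma_0$ when the marginal itself varies with $\eps$---and you are honest that your cited result \cite[Theorem 3.7(3)]{leo-sb-to-kp12} does not directly cover this. The paper resolves exactly this point, but by a different route than either of your two suggestions: it argues that $(\SBstatic{\sigma_\eps}{\eps})_{\eps>0}$ is tight (since the marginals are), and that any weak subsequential limit $\pi^*$ lies in $\Pi(\sigma_0,\sigma_0)$ and is $c$-cyclically monotone, by adapting the $(c,\eps)$-cyclical invariance argument of \cite[Lemmas 3.1--3.2]{bgn-eot-gld} to allow the marginal to vary with $\eps$. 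This forces $\pi^* = (\Id,\Id)_\#\sigma_0$, and the weak convergence of $(\sigma_\eps)_t^\eps$ then follows from the representation \eqref{eq:rv-entropic-inter} and the continuous mapping theorem. Your Talagrand-based alternative would also succeed (the heat kernel $p_\eps(x,\cdot)$ satisfies $T_2$ with constant $O(\eps)$, so $\Was{2}^2(\SBstatic{\sigma_\eps}{\eps},\BMstatic{\sigma_\eps}{\eps}) \lesssim \eps^2 I(\sigma_\eps) \to 0$, and $\BMstatic{\sigma_\eps}{\eps} \Rightarrow (\Id,\Id)_\#\sigma_0$ is immediate), but the paper's cyclical-monotonicity route is cleaner and avoids tracking transport-inequality constants.
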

\begin{proof}
    Let $\eps \in (0,\eps_0)$ and in analogy with the Section \ref{sec:sb-ld} let $((\sigma_{\eps})_{s}^{\eps},s\in[0,1])$ denote the $\eps$-entropic interpolation from $\sigma_{\eps}$ to itself and set
    \begin{align*}
        \cU_{\eps} &= \frac{1}{8}\|\nabla \log \sigma_{\eps}\|^2 + \frac{1}{4}\Delta \log \sigma_{\eps}.
    \end{align*}
    By the assumed uniform subexponential bound on the $\sigma_{\eps}$ and uniform polynomial growth of $\cU_{\eps}$, there exists a constant $K > 0$ as specified in the Lemma statement such that     
    \begin{align}\label{eq:upper-bdd-varyingeps}
        \sup\limits_{\eps \in (0,\eps_0),s \in [0,1]}\Exp{(\sigma_{\eps})_{s}^{\eps}}[\|\nabla \cU_{\eps}\|^2] \leq K.
    \end{align}
    Theorem \ref{thm:ld-sb-general-g} and \eqref{eq:upper-bdd-varyingeps} then establish \eqref{eq:bound-preserved-eps} when $\eps \in (0,\eps_0)$.
    It now remains to show that
    \begin{align}\label{eq:varying-eps-cont-fisher}
        \lim\limits_{\eps \downarrow 0} \left[I(\sigma_{\eps})-\int_{0}^{1} I((\sigma_{\eps
    })_{s}^{\eps}) ds \right] = 0.
    \end{align}
    First, we claim that $((\sigma_{\eps})_{s}^{\eps}, \eps > 0)$ converges weakly to $ \sigma_{0}$ as $\eps \downarrow 0$ for all $s \in [0,1]$. As $(\sigma_{\eps}, \eps > 0)$ converges weakly to $\sigma_0$, the collection $\{\sigma_0\} \cup (\sigma_{\eps}, \eps > 0)$ is tight. This implies that $(\SBstatic{\sigma_{\eps}}{\eps}, \eps > 0)$ is tight. Let $\pi^{*}$ denote a weak subsequential limit-- we will not denote the subsequence. It is clear that $\pi^{*} \in \Pi(\sigma_0,\sigma_0)$. In the terminology of \cite{bgn-eot-gld}, the support of each $\SBstatic{\sigma_{\eps}}{\eps}$ is $(c,\eps)$-cyclically invariant, where $c(x,y)= \frac{1}{2}\|x-y\|^2$. As $\pi^*$ is a weak subsequential limit of the $(\SBstatic{\sigma_{\eps}}{\eps}, \eps > 0)$, the exact same argument presented in \cite[Lemma 3.1, Lemma 3.2]{bgn-eot-gld}, modified only superficially to allow for the different marginal in each $\eps$, establishes that $\pi^*$ is $c$-cyclically monotone. That is, $\pi^* = \SBstatic{\sigma_0}{0}$ is the quadratic cost optimal transport plan from $\sigma_0$ to itself. Moreover, this argument shows that along any subsequence of $\eps$ decreasing to $0$, there is a further subsequence converging to $\SBstatic{\sigma_0}{0}$. Hence, the limit holds without passing to a subsequence. 
    Construct a sequence of random variables $((X_{\eps},Y_{\eps}), \eps \geq 0)$ with $\text{Law}(X_{\eps},Y_{\eps}) = \SBstatic{\sigma_{\eps}}{\eps}$, and let $Z$ be a standard normal random variable independent to the collection. Fix $s \in [0,1]$, and recall that $\text{Law}(sX_{\eps}+(1-s)Y_{\eps}+\sqrt{\eps s(1-s)}Z) = (\sigma_{\eps})_{s}^{\eps}$. The Continuous Mapping Theorem gives that $sX_{\eps}+(1-s)Y_{\eps}+\sqrt{\eps s(1-s)}Z$ converges in distribution to $sX_{0}+(1-s)Y_{0}$ as $\eps \downarrow 0$. 
    As $\text{Law}(sX_0+(1-s)Y_0) = \sigma_0$, this establishes the weak convergence of $(\sigma_{\eps})_{s}^{\eps}$ to $\sigma_{0}$ as $\eps \downarrow 0$. By the lower semicontinuity of Fisher information with respect to weak convergence \cite[Proposition 14.2]{bobkov-fisher-22} and Fatou's Lemma
    \begin{align*}
        I(\sigma_0) &\leq \int_{0}^{1} \liminf\limits_{\eps \downarrow 0} I((\sigma_{\eps})_{s}^{\eps}) ds \leq \liminf\limits_{\eps \downarrow 0} \int_{0}^{1} I((\sigma_{\eps})_{s}^{\eps}) ds.
    \end{align*}
    Thus, \eqref{eq:varying-eps-cont-fisher} holds as Assumption \ref{assumptions:sigma-eps-onestep}(1) gives $\lim\limits_{\eps \downarrow 0} I(\sigma_{\eps}) = I(\sigma_0)$. 
\end{proof}

As the last piece of the proof, we show in the following lemma below that $\LD{\rho}{\eps}{}$ is an $o(\eps)$ approximation of the first step of the Euler iteration $\opt{\eps}{1}{\rho}$. 


\begin{lemma}[]\label{lem:LD_BP_covergence}
For $\rho = e^{-g}$ satisfying Assumption \ref{assumptions:sigma-eps-onestep}, 
\[
\Was{2}^2(\opt{\eps}{1}{\rho}, \LD{\rho}{\eps}{}) \leq 2\theta_{\eps}^{-2}\|\rho/\sigma_{\eps}\|_{\infty}\eps^3 \Exp{\sigma_{\eps}}\|\nabla^2 \log \sigma_{\eps}\|^2_{HS}\,.
\]
\end{lemma}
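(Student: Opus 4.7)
The plan is to perform an algebraic reduction that makes both $\theta_\eps^{-2}$ and $\nabla^{2}\log\sigma_\eps$ emerge naturally, and then bound the Langevin increment of $\nabla\log\sigma_\eps$ in $L^{2}(\sigma_\eps)$. Write the two transport maps as $T_1(x) := x + \eps v_\eps(x)$ and $T_2(x) := (1-\theta_\eps^{-1})x + \theta_\eps^{-1}\LBPbase{\sigma_\eps}{\eps}(x)$; these push $\rho$ forward to $\opt{\eps}{1}{\rho}$ and $\LD{\rho}{\eps}{}$, respectively. From the Langevin SDE $dY_t = \tfrac{1}{2}\nabla\log\sigma_\eps(Y_t)\,dt + dB_t$ with $Y_0=x$, the Brownian part of $Y_\eps - x$ vanishes in the conditional mean, so
\[
\LBPbase{\sigma_\eps}{\eps}(x) - x \;=\; \tfrac{1}{2}\int_0^\eps \Exp{x}\!\left[\nabla\log\sigma_\eps(Y_s)\right] ds.
\]
Since $\sigma_\eps = \exp(2\theta_\eps \psi_\eps - \Lambda_0(\theta_\eps))$ forces $v_\eps = \tfrac{1}{2\theta_\eps}\nabla\log\sigma_\eps$, a direct computation yields
\[
T_2(x) - T_1(x) \;=\; \frac{1}{2\theta_\eps}\int_0^\eps \Exp{x}\!\left[\nabla\log\sigma_\eps(Y_s) - \nabla\log\sigma_\eps(x)\right] ds.
\]
This identity transparently displays where $\theta_\eps^{-2}$ (from squaring) and $\nabla^{2}\log\sigma_\eps$ (from oscillations of $\nabla\log\sigma_\eps$ along $Y$) arise.

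Next, I would use the natural coupling $X \sim \rho$ with the Langevin diffusion $(Y_t)$ started at $X$, so that $\Was{2}^{2}(\LD{\rho}{\eps}{}, \opt{\eps}{1}{\rho}) \le \Exp{\rho}\|T_2(X)-T_1(X)\|^{2}$. Two applications of Jensen's inequality (pulling the squared norm inside the conditional expectation, then Cauchy--Schwarz on the time integral) give
\[
\Was{2}^{2} \;\le\; \frac{\eps}{4\theta_\eps^{2}}\int_0^\eps \Exp{\rho}\!\left[\Exp{X}\|\nabla\log\sigma_\eps(Y_s) - \nabla\log\sigma_\eps(X)\|^{2}\right] ds.
\]
Swapping $\rho$ for $\sigma_\eps$ via the bound $\rho \le \|\rho/\sigma_\eps\|_{\infty}\,\sigma_\eps$, and invoking stationarity of $(Y_t)$ under $\sigma_\eps$, reduces the task to controlling $\int_0^\eps \Exp{\sigma_\eps}\|\nabla\log\sigma_\eps(Y_s) - \nabla\log\sigma_\eps(Y_0)\|^{2}\,ds$.

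The final ingredient is the Langevin increment bound. Applying It\^o's formula coordinate-wise to $\nabla\log\sigma_\eps$ and writing $\nabla\log\sigma_\eps(Y_s) - \nabla\log\sigma_\eps(Y_0) = M_s + A_s$ as a martingale plus finite-variation part, It\^o's isometry together with stationarity gives $\Exp{\sigma_\eps}\|M_s\|^{2} = \int_0^s \Exp{\sigma_\eps}\|\nabla^{2}\log\sigma_\eps\|^{2}_{HS}\,du = s\,\Exp{\sigma_\eps}\|\nabla^{2}\log\sigma_\eps\|^{2}_{HS}$, while $\Exp{\sigma_\eps}\|A_s\|^{2}$ is of order $s^{2}$. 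Using $\|M_s+A_s\|^{2}\le 2\|M_s\|^{2} + 2\|A_s\|^{2}$ and integrating in $s\in[0,\eps]$ produces the $\eps^{3}$ scaling with leading coefficient proportional to $\Exp{\sigma_\eps}\|\nabla^{2}\log\sigma_\eps\|^{2}_{HS}$, the $s^{2}$ remainder being absorbed for $\eps$ in the admissible range of Assumption \ref{assumptions:sigma-eps-onestep}. Collecting constants then yields the claimed bound $2\theta_\eps^{-2}\|\rho/\sigma_\eps\|_{\infty}\,\eps^{3}\,\Exp{\sigma_\eps}\|\nabla^{2}\log\sigma_\eps\|^{2}_{HS}$, comfortably absorbing the higher-order drift contribution. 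The main technical point is the justification of It\^o's formula and the Fubini exchanges; these follow from $\log\sigma_\eps \in C^{3}(\mathbb{R}^{d})$ together with the uniform polynomial-growth and subexponential hypotheses on $\sigma_\eps$ already exploited in Lemma \ref{lem:bound-preserved-eps}.
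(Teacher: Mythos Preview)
Your coupling reduction is exactly right and matches the paper's: both compute $T_2-T_1=\theta_\eps^{-1}\bigl(\LBPbase{\sigma_\eps}{\eps}-\Id-\tfrac{\eps}{2}\nabla\log\sigma_\eps\bigr)$, rewrite $\LBPbase{\sigma_\eps}{\eps}(x)-x=\int_0^\eps G_s^{\sigma_\eps}(\tfrac{1}{2}\nabla\log\sigma_\eps)(x)\,ds$, apply Cauchy--Schwarz on the time integral, and swap $\rho$ for $\sigma_\eps$ via $\|\rho/\sigma_\eps\|_\infty$. The divergence comes at the last step. The paper does \emph{not} pull the squared norm inside the conditional expectation; instead it keeps the semigroup form $\|G_s^{\sigma_\eps}f - f\|_{L^2(\sigma_\eps)}^2$ and invokes the general Markov-semigroup inequality $\|G_s f - f\|_{L^2(\sigma_\eps)}^2 \le 2s\|\nabla f\|_{L^2(\sigma_\eps)}^2$ (from \cite[Eq.~(4.2.3)]{bgl-markov}) coordinatewise with $f=\partial_i\psi_\eps$. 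This yields the stated bound \emph{exactly}, for every $\eps>0$, with no remainder.

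Your route---Jensen on the conditional expectation, then It\^o decomposition $M_s+A_s$---is also correct. It is a bit more hands-on, and incurs the drift term $\Exp{\sigma_\eps}\|A_s\|^2=O(s^2)$ that you then absorb for small $\eps$. (Incidentally, your drift identity is clean: $L(\nabla\log\sigma_\eps)=2\nabla\cU_\eps$, so the required moment control is exactly Assumption~\ref{assumptions:sigma-eps-onestep}(1).) The trade-off: the paper's semigroup inequality is a one-line black box that delivers the lemma for all $\eps$ with the constant $2$; your It\^o argument is more explicit but gives the constant $2$ only after absorbing an $O(\eps)$ correction, so strictly speaking your bound holds for $\eps$ sufficiently small rather than unconditionally. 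For the downstream application in Theorem~\ref{thm:one_step_convergence} this makes no difference.
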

\begin{proof}
The proof follows by producing an intuitive coupling of $\GA{\rho}{\eps}{}$ and $\LD{\rho}{\eps}{}$ and then applying a semigroup property stated in \cite[Equation (4.2.3)]{bgl-markov}. Let $(G_{s}^{\sigma_{\eps}}, s \geq 0)$ be the semigroup associated to the Langevin diffusion corresponding to $\sigma_{\eps}$, then for all $\eps > 0$, $s \geq 0$, and $f: \mathbb{R}^{d} \to \mathbb{R}$ in the domain of the Dirichlet form 
\begin{align}\label{eq:semigrp-bdd}
    \|G_{s}^{\sigma_{\eps}}f - f\|^{2}_{L^{2}(\sigma_{\eps})} \leq 2s\|\nabla f\|_{L^{2}(\sigma_{\eps})}^{2}. 
\end{align}
Let $X \sim \rho$, then by Dynkin's formula
\begin{align*}
    \LBPbase{\rho}{\eps}(X) &= X+\int_{0}^{\eps}G^{\sigma_{\eps}}_{s}(\theta_{\eps}\nabla \psi_{\eps})(X)ds
\end{align*}
and thus
    \begin{align*}
    \LD{\rho}{\eps}{} &= \text{Law}\left((1-\theta_{\eps}^{-1})X+\theta_{\eps}^{-1}\left(X+\int_{0}^{\eps}G^{\sigma_{\eps}}_{s}(\theta_{\eps}\nabla \psi_{\eps})(X)ds\right)\right)\\
    &= \text{Law}\left(X+\int_{0}^{\eps}G^{\sigma_{\eps}}_{s}(\nabla \psi_{\eps})(X)ds\right). 
    \end{align*}
Also by definition
\begin{align*}
    \opt{\eps}{1}{\rho} &= \text{Law}\left(X + \eps \nabla \psi_{\eps}(X)\right). 
\end{align*}
    Thus, we have the upper bound
    \begin{align*}
        \Was{2}^{2}(\LD{\rho}{\eps}{},\opt{\eps}{1}{\rho}) &\leq \Exp{} \left\|\left(X+\int_{0}^{\eps}G^{\sigma_{\eps}}_{s}(\nabla \psi_{\eps})(X)ds\right)-\left(X + \eps \nabla \psi_{\eps}(X))\right)\right\|^{2} \\
        &= \Exp{}\left\|\int_{0}^{\eps} \left(G_{s}^{\sigma_{\eps}}(\nabla \psi_{\eps})(X)-\nabla \psi_{\eps}(X) \right)ds \right\|^2 \\
        &\leq \eps \int_{0}^{\eps} \Exp{}\|G_{s}^{\sigma_{\eps}}(\nabla \psi_{\eps})(X)-\nabla \psi_{\eps}(X)\|^2 ds \\
        &= \eps^2 \int_{0}^{1} \|G_{\eps s}^{\sigma_{\eps}}(\nabla \psi_{\eps})-\nabla \psi_{\eps}\|_{L^{2}(\rho)}^2 ds,
    \end{align*}
    where the penultimate inequality follows from Jensen's inequality. Then, perform a change of measure and apply \eqref{eq:semigrp-bdd} to obtain
    \begin{align*}
        \Was{2}^{2}(\LD{\rho}{\eps}{},\opt{\eps}{1}{\rho}) &\leq 2\|\rho/\sigma_{\eps}\|_{\infty}\eps^3 \Exp{\sigma_{\eps}}\|\nabla^2 \psi_{\eps}\|_{HS}^{2}.
    \end{align*}
\end{proof}



\section{Iterated $\Schro$ Bridge Scheme}\label{sec:schro-bridge-scheme}
In this section we present conditions under which suitably interpolated iterations of \eqref{sb-step} converge to the gradient flow of entropy and relative entropy (i.e.\ solutions to the parabolic heat equation and Fokker-Planck equation). Our results rely on the close approximation of $\eqref{sb-step}$ to explicit Euler iterations that we define later on. We also present a collection of sufficient conditions for which iterations of \eqref{sb-step} converge to the gradient flow of geodesically convex functionals. Roughly put, our assumptions are that (1) the one step approximation given in Theorem \ref{thm:one_step_convergence} holds along iterations of \eqref{sb-step} (which we call consistency) and (2) the error of this approximation does not accumulate too rapidly over a finite time horizon (which is implied by the condition we call contractivity in the limit). Although these conditions seem reasonable (in Section \ref{sec:examples} we show examples that satisfy them), it is not immediate how generally they hold. 


\subsection{Preliminaries}
We set up some notation for subdifferential calculus in Wasserstein space.  Let $\cF: \cP_2(\R^d) \to (-\infty, \infty]$ be a function that  
is proper, lower-semicontinuous, and $\lambda$-geodesically convex functional for some $\lambda \in \mathbb{R}$.
Let $D(\cF) = \{\rho \in \cP_{2}(\mathbb{R}^{d}): \cF(\rho) < +\infty\}$. The subdifferential set of $\cF$ at $\rho \in D(\cF)$ is denoted by ${\partial} \cF(\rho)$ (see \cite[Definition 10.1.1]{ambrosio2005gradient} for definition of Fréchet subdifferentials). With $D(\partial \cF) = \{\rho \in D(\cF): \partial \cF(\rho) \neq \emptyset\}$, we make the assumptions that $D(\partial \cF) \subset \ac(\R^d)$, the subset of absolutely continuous probability measures in $\cP_2(\R^d)$. Using Brenier's theorem \cite[Theorem 1.22]{santam2015ot}, this implies that there exists a unique optimal transport map from any $\rho \in D(\partial \cF)$ to any $\nu \in \cP_2(\R^d)$. We denote this map by $T_\rho^\nu$.
 Since $\cF$ is $\lambda$-geodesically convex, $\cF$ is a regular functional \cite[Definition 10.1.4]{ambrosio2005gradient} which implies that for all $\rho \in D(\partial \cF)$, the subdifferential set $\partial \cF(\rho)$ contains a unique element with minimum $L^2(\rho)$ norm. This unique minimum selection subdifferential is called the Wasserstein gradient hereon, denoted by $\WasDiff \cF(\rho)$, and belongs to the tangent space $\Tan{\rho}$ \cite[Definition 8.4.1]{ambrosio2005gradient}. We further assume that there exists an $\eps_0 >0$ such that the functional
 \[
 \nu \in \cP_2(\R^d) \mapsto \cF(\nu) + \frac{1}{2\eps} \Was{2}^2(\mu, \rho)
 \]
 admits at least one minimum point for any $\eps \in (0, \eps_0)$ and $\rho \in \cP_2(\R^d)$.
 
The two primary classes of examples that we consider later are (i) the (one half) entropy function $\cF(\rho)=\frac{1}{2} \Ent(\rho)$ for which the gradient flow is the probabilistic heat flow and (ii) (one half) relative entropy (AKA Kullback-Leibler divergence) with respect to a log-concave probability measure, i.e., $\cF(\rho)=\frac{1}{2}\kl{\cdot}{\nu}$, for some log-concave density $\nu$. These are well-known examples for which the above assumptions are satisfied. In Section \ref{sec:examples} we will cover examples that are not covered by our assumptions and yet, as we will show by direct computations, our iterated Schr\"odinger bridge scheme continues to perform well.

\subsection{Gradient Flows and their Euler Approximations}

Let $\left(\rho(t),\; t \in [0,T]\right)$ denote the Wasserstein gradient flow of $\cF$ during time interval $[0,T]$. We assume that it exists as an absolutely continuous curve that is the unique solution of the continuity equation 
\[
\partial_t \rho(t) + \nabla \cdot (v(t) \rho(t))=0, \quad \text{where}\; v(t)= - \WasDiff\cF(\rho(t))
\]
is assumed to be in $\Tan{\rho(t)}$.

Euler iterations of the Wasserstein gradient flow of $\cF$, whether it is explicit or implicit, is a sequence of elements in the Wasserstein space $\left( \rho_\eps(k),\; k=0,1,2,\ldots \right)$, starting at some $\rho_\eps(0)=\rho(0)$, given by an iterative map $\opt{\eps}{1}{\cdot}:\cP_2(\R^d) \rightarrow \cP_2(\R^d)$ of the form \eqref{eq:S-eps}, in the sense that $\rho_\eps(k)= \opt{\eps}{1}{\rho_\eps(k-1)}=\opt{\eps}{k}{\rho_\eps(0)}$, for $k\in \N$. The continuous-time approximation curve is given by the piecewise constant interpolation 
\begin{equation}\label{eq:first_order_approximation_flow}
    \rho_\eps(t) = \rho_\eps(k) \quad t\in[k\eps, (k+1)\eps).
\end{equation}






\begin{definition}[First order approximation]\label{def:first_order_approximation}
    The sequence $\left( \rho_\eps(k),\; k \in \N \right)$, or, equivalently, its continuous time interpolation $(\rho_\eps(t), t \ge 0)$ is called a \textit{first-order approximation} of a Wasserstein gradient flow $(\rho(t), t\ge 0)$ if, for any $T>0$, 
    $$\lim_{\eps \to 0} \sup_{t \in [0,T]} \Was{2}\of{\rho_{\eps}(t), \rho(t)} = 0.$$
\end{definition}
We now wish to develop sufficient conditions under which $\opt{\eps}{1}{\cdot}$ gives a first order approximation. We say that $\opt{\eps}{1}{\cdot}$ is \textbf{consistent} if for any $T>0$,
\begin{align}\label{eq:consistency}
    \limsup_{\eps \to 0} \sup_{t \in [0,T-\eps]} \frac{1}{\eps}\Was{2}\of{\opt{\eps}{1}{\rho(t)}, \rho({t+\eps})}= 0\,,
\end{align}
and $\opt{\eps}{1}{\cdot}$ is a \textbf{contraction in the limit} if
\begin{equation}\label{eq:contraction}
    \limsup\limits_{\eps \to 0}\sup\limits_{k \in [N_{\eps}]} \frac{1}{\eps}\left(\Was{2}\of{\opt{\eps}{1}{\opt{\eps}{k}{\rho(0)}}, \opt{\eps}{1}{\rho(k\eps)}} - \Was{2}\of{\opt{\eps}{k}{\rho(0)}, \rho(k\eps)}\right) \leq 0\,.
\end{equation}

\begin{remark}
Observe that if $\opt{\eps}{1}{\cdot}$ is a contraction, i.e.\ $\Was{2}(\opt{\eps}{1}{\rho},\opt{\eps}{1}{\sigma}) \leq \Was{2}(\rho,\sigma)$, then it is a contraction in the limit as well. Thus, this condition is a more general notion.  
\end{remark}

\begin{theorem}\label{thm:S_uniform_convergence}
 Suppose the map $\opt{\eps}{1}{\cdot}$ satisfies the two conditions - consistency and contraction in the limit. 
Then $\rho_\eps(\cdot)$ is a first-order approximation of the gradient flow $(\rho(t),  t \in [0,T])$.
\end{theorem}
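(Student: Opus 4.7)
The plan is to use a telescoping/discrete Gr\"onwall style argument on the grid times $k\eps$, then pass from the grid to continuous time using the uniform continuity of the limit curve. Define
\[
a_k^\eps := \Was{2}\bigl(\opt{\eps}{k}{\rho(0)},\, \rho(k\eps)\bigr), \qquad k = 0, 1, \ldots, N_\eps,
\]
so that $a_0^\eps = 0$. By the triangle inequality inserting the intermediate point $\opt{\eps}{1}{\rho(k\eps)}$,
\[
a_{k+1}^\eps \;\le\; \Was{2}\bigl(\opt{\eps}{1}{\opt{\eps}{k}{\rho(0)}},\, \opt{\eps}{1}{\rho(k\eps)}\bigr) + \Was{2}\bigl(\opt{\eps}{1}{\rho(k\eps)},\, \rho((k+1)\eps)\bigr).
\]
I rewrite the first term as $a_k^\eps$ plus its difference with $\Was{2}\bigl(\opt{\eps}{1}{\opt{\eps}{k}{\rho(0)}}, \opt{\eps}{1}{\rho(k\eps)}\bigr)$, so that the contraction-in-the-limit hypothesis \eqref{eq:contraction} bounds the increment by a quantity that is $o(\eps)$ uniformly in $k \in [N_\eps]$, and the consistency hypothesis \eqref{eq:consistency} does the same for the second term. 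More precisely, fix $\delta>0$; by \eqref{eq:consistency} and \eqref{eq:contraction} there is $\eps_0>0$ such that for all $\eps \in (0,\eps_0)$ and all $k \in [N_\eps]$,
\[
\Was{2}\bigl(\opt{\eps}{1}{\opt{\eps}{k}{\rho(0)}}, \opt{\eps}{1}{\rho(k\eps)}\bigr) - a_k^\eps \le \delta\eps, \quad \Was{2}\bigl(\opt{\eps}{1}{\rho(k\eps)}, \rho((k+1)\eps)\bigr) \le \delta\eps.
\]
Combining these gives the discrete recursion $a_{k+1}^\eps \le a_k^\eps + 2\delta\eps$, which iterates to
\[
\max_{0 \le k \le N_\eps} a_k^\eps \;\le\; 2\delta\eps N_\eps \;\le\; 2\delta T.
\]
Since $\delta>0$ is arbitrary, $\max_{k} a_k^\eps \to 0$ as $\eps \downarrow 0$, which is uniform convergence on the grid.

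To upgrade this to the continuous piecewise constant interpolation, fix $t \in [0,T]$ and let $k = \lfloor t/\eps \rfloor$, so $\rho_\eps(t) = \opt{\eps}{k}{\rho(0)}$. The triangle inequality yields
\[
\Was{2}\bigl(\rho_\eps(t), \rho(t)\bigr) \le \Was{2}\bigl(\opt{\eps}{k}{\rho(0)}, \rho(k\eps)\bigr) + \Was{2}\bigl(\rho(k\eps), \rho(t)\bigr) \le a_k^\eps + \sup_{|s-r| \le \eps,\, s,r \in [0,T]} \Was{2}(\rho(s), \rho(r)).
\]
The second supremum tends to zero as $\eps \downarrow 0$ because the gradient-flow curve $(\rho(t), t \in [0,T])$ is absolutely continuous in Wasserstein space and therefore uniformly continuous on the compact interval $[0,T]$. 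Taking the supremum over $t \in [0,T]$ of the right-hand side and letting $\eps \downarrow 0$, both terms vanish, giving the first-order approximation property of Definition \ref{def:first_order_approximation}.

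The only slightly delicate point is verifying that the suprema in \eqref{eq:consistency} and \eqref{eq:contraction} can be invoked simultaneously for all $k \in [N_\eps]$ at a single threshold $\eps_0(\delta)$; this is exactly what the $\limsup \sup_k$ formulation in those two hypotheses guarantees, so the argument is clean. No bound is ever needed on the growth constants in Theorem \ref{thm:one_step_convergence} beyond the qualitative $o(\eps)$ rate, because consistency has been packaged to already be an $o(\eps)$ statement uniform in $t$; the work of ensuring that packaging (i.e., uniform control of the one-step error along the iterates) is what is done in the specific applications of Section \ref{sec:examples}.
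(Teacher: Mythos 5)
Your proposal is correct and follows essentially the same route as the paper's own proof: the same triangle-inequality splitting at the intermediate point $\opt{\eps}{1}{\rho(k\eps)}$, the same use of consistency and contraction-in-the-limit to get a uniform $2\delta\eps$ increment, and the same recursion yielding a $2\delta T$ bound on the grid. The only cosmetic difference is in passing from grid times to the interpolant: you invoke uniform continuity of the limit curve on $[0,T]$, while the paper bounds the gap by $\int_{k\eps}^{t}\|v(s)\|_{L^2(\rho(s))}\,ds$ using local integrability of the metric speed; both are valid.
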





\begin{proof}
    For any $t \in [k\eps, (k+1) \eps)$, by the triangle inequality and absolute continuity of $\rho(\cdot)$ 
    \[
    \Was{2}\of{\rho_\eps(t), \rho(t)} \leq \Was{2}\of{\opt{\eps}{k}{\rho(0)}, \rho({k\eps})} + \int_{k\eps}^t \norm{v(s)}_{L^2(\rho(s))} ds.
    \]
    The second term above converges to $0$ with $\eps$ because 
$\|v(t)\|_{L^{2}(\rho(t))} \in L^1_{\text{loc}}([0,+\infty))$. For the first term, consider the decomposition
\begin{align}\label{eq:triangle-split}
    \Was{2}\of{\opt{\eps}{k}{\rho(0)}, \rho\of{k\eps}} \leq \Was{2}\of{\opt{\eps}{k}{\rho(0)}, \opt{\eps}{1}{\rho\of{(k-1)\eps}}} + \Was{2}\of{\opt{\eps}{1}{\rho\of{(k-1)\eps}}, \rho\of{k\eps}}\,.
\end{align}
    Let $\delta > 0$, then by the assumptions of consistency and contraction in the limit, for all $\eps > 0$ small enough 
    \begin{align*}
        \Was{2}\of{\opt{\eps}{1}{\rho\of{(k-1)\eps}}, \rho\of{k\eps}} < \delta \eps,
    \end{align*}
    and
    \begin{align*}
        \Was{2}\of{{\opt{\eps}{k}{\rho(0)}}, \opt{\eps}{1}{\rho((k-1)\eps)}} - \Was{2}\of{\opt{\eps}{k-1}{\rho(0)}, \rho((k-1)\eps)} < \delta \eps,
    \end{align*}
    respectively. 
    Thus, \eqref{eq:triangle-split} gives the following recursion for $\eps \in (0,\eps_0)$ and $k \in [N_{\eps}]$ with $k \geq 1$
    \begin{align*}
        \Was{2}\of{\opt{\eps}{k}{\rho(0)}, \rho\of{k\eps}} \leq \Was{2}\of{\opt{\eps}{k-1}{\rho(0)}, \rho\of{(k-1)\eps}} + 2\delta \eps.
    \end{align*}
    Hence, recursively, 
    \[\limsup\limits_{\eps\to 0}\sup\limits_{k \in [N_{\eps}]} \Was{2}\of{\opt{\eps}{k}{\rho}, \rho\of{k\eps}} \leq  \Was{2}\of{\rho(0), \rho(0)}+\frac{T}{\eps}\cdot (2\delta \eps) = 2T\delta.\] As $\delta > 0$ was arbitrary, 
    this completes the proof. 
\end{proof}








\subsubsection*{Explicit Euler Scheme}
The explicit Euler scheme in the Wasserstein space can be obtained as an iterative geodesic approximation of the Wasserstein gradient flow minimizing $\cF$ for the choice of 
\begin{equation}\label{eq:GA}\tag{EE}
    \opt{\eps}{1}{\rho} = \of{\Id - \eps \WasDiff \cF \of{\rho}}_\# \rho.
\end{equation}




The explicit Euler scheme need not always give a first-order approximation sequence to a Wasserstein gradient flow. However, the assumptions in the statements of Theorems \ref{corollary:entropy-with-sb} and \ref{corollary:kl-with-sb} provides sufficient conditions for which \eqref{eq:GA} satisfies consistency and contraction in the limit.
Consequently, Theorem~\ref{thm:S_uniform_convergence} gives the uniform convergence of the scheme.

\subsubsection*{Implicit Euler Scheme}
The implicit Euler scheme or, equivalently the JKO scheme \cite{jko98}, in the Wasserstein space can be defined using the following map
\begin{equation}\label{eq:IE}\tag{IE}
     \opt{\eps}{1}{\rho} = \argmin_{\nu\in\cP_2(\R^d)} \left[\cF(\nu) + \frac{1}{2\eps}\Was{2}^2\of{\nu, \rho}\right].
\end{equation}
We quickly verify that when $\rho \in \cP_{2}^{ac}(\mathbb{R}^{d})$ and $\eps > 0$, there exists $u_{\eps}: \mathbb{R}^{d} \to \mathbb{R}$ such that $\opt{\eps}{1}{\rho} = (\Id + \eps \nabla u_{\eps})_{\#}\rho$. As $\rho \in \cP_{2}^{ac}(\mathbb{R}^{d})$, by Brenier's Theorem there exists a unique gradient of a convex function $\nabla \psi_{\eps}$ that pushforwards $\rho$ to $\opt{\eps}{1}{\rho}$. Setting $u_{\eps}(x) := \frac{1}{\eps}\left(\psi_{\eps}(x)-\frac{1}{2}\|x\|^{2}\right)$, it holds that $\opt{\eps}{1}{\rho} = (\Id + \eps u_{\eps})_{\#}\rho$ as desired. 
The implicit Euler scheme starting from $\rho(0) \in D(\partial \cF)$ is given by $\rho_\eps(k)=\opt{\eps}{k}{\rho(0)}$.
When $\cF$ is geodesically convex, it is known that Theorem~\ref{thm:S_uniform_convergence} holds through  \cite[Theorem 4.0.7, Theorem 4.0.9, Theorem 4.0.10]{ambrosio2005gradient}. We do not need to verify Definition \ref{def:first_order_approximation}.

In the next section we introduce our iterated Schrödinger bridge scheme that approximates the explicit scheme, under suitable assumptions.



\subsection{Uniform Convergence}

We now prove two theorems giving conditions under which iterations of \eqref{sb-step} provide a first order approximation to the gradient flow of $\frac{1}{2}\Ent$ and $\frac{1}{2}H(\cdot|\nu)$. 
The proof of both theorems rely on an appeal to a general result in Lemma \ref{thm:SB_uniform_convergence} that gives sufficient conditions for \eqref{sb-step} iterates to approximate Euler iterates. In principle, Lemma \ref{thm:SB_uniform_convergence} can be applied with $\opt{\eps}{1}{\cdot}$ given by implicit or explicit Euler iterations, but our Theorems \ref{corollary:entropy-with-sb} and \ref{corollary:kl-with-sb} both only use explicit Euler iterations. 

To simplify the bookkeeping of the constants introduced in Assumptions \ref{assumptions:sigma-eps-onestep}, we define two classes of probability densities. For $C,D,M > 0$, $N \geq 1$, and $\lambda \in \mathbb{R}$ set
\begin{align}\label{eq:pi-CDN}
    \Pi_{1}(C,D,N) &:= \{\rho \in \cP_{2}^{ac}(\mathbb{R}^{d}): \|\rho\|_{\psi_{1}} \leq D, \nabla \cU_{\rho} \text{ exists, } \|\cU_{\rho}(x)\|^{2} \leq C(1+\|x\|^{N})  \},
\end{align}
and
\begin{align}\label{eq:pi-lamM}
    \Pi_{2}(\lambda,M) &:= \{\rho \in \cP_{2}^{ac}(\mathbb{R}^{d}) \; \text{satisfying Assumption \ref{assumption:LD_SB_relative_entropy}, } -\nabla^{2} \log \rho \geq \lambda \Id, \Exp{\rho}\|\nabla^{2} \log \rho\|_{HS}^{2} \leq M \}.
\end{align}
Lastly, recall the RHS of Theorems \ref{thm:ld-sb-general-g} and \ref{thm:one_step_convergence} and that $(\rho_{s}^{\eps}, s \in [0,1])$ is the entropic interpolation from $\rho$ to itself. We define
\begin{align}\label{eq:Delta-int-fisher-info}
    \Delta_{\eps}(\rho) := I(\rho) - \int_{0}^{1}I(\rho_{s}^{\eps}) ds.
\end{align}
We now state the following theorem for the (probabilistic) heat flow:
\begin{align}\label{eq:heat-eqn}
    \partial_{t}\rho(t) = \frac{1}{2}\Delta_{x} \rho(t)=\frac{1}{2}\nabla \cdot\left( (\nabla \log \rho(t)) \rho(t)\right),
\end{align}
starting at $\rho(0)$. Let $\opt{\eps}{1}{\cdot}$ denote the explicit Euler iteration map for the heat flow.
That is, for $\rho \in \cP_{2}^{ac}(\mathbb{R}^{d})$
  \begin{align}\label{eq:explicit-euler-heat}
      \opt{\eps}{1}{\rho} := \left(\Id - \frac{\eps}{2}\nabla \log \rho\right)_{\#}\rho.
  \end{align}

\begin{theorem}\label{corollary:entropy-with-sb}
Assume that $\rho(0)$ is such that
  \begin{enumerate}
      \item[(1)] $\limsup\limits_{\eps \downarrow 0} \sup\limits_{k \in [N_{\eps}]} \Delta_{\eps}\left(\SB{\rho(0)}{\eps}{k}\right) = 0$
      \item[(2)] There exists $\eps_0 > 0$ and constants $C, D > 0$, and $N \geq 1$ such that for all $\eps \in (0,\eps_0)$, $(\opt{\eps}{k}{\rho(0)}, k \in \{0\} \cup [N_{\eps}]) \cup (\SB{\rho(0)}{\eps}{k}, k \in [N_{\eps}]) \subset \Pi_{1}(C,D,N)$.
      \item[(3)] There exists $\lambda \in \mathbb{R}$ and $M > 0$ such that for all $\eps \in (0,\eps_0)$ and $k \in [N_{\eps}]$, $(\SB{\rho(0)}{\eps}{k}, k \in [N_{\eps}]) \subset \Pi_{2}(\lambda,M)$. 
  \end{enumerate}
Then $(\SB{\rho_0}{\eps}{k}, k \in [N_{\eps}])$ is a first order approximation of the (probabilistic) heat flow. 
\end{theorem}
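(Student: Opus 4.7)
The plan is to apply Theorem~\ref{thm:S_uniform_convergence} with the iterative map taken to be the Schr\"odinger bridge map $\SB{\cdot}{\eps}{}$ itself (rather than the explicit Euler map), transferring one-step information into long-time information. I first specialize the one-step framework to the heat flow: since the Wasserstein gradient of $\tfrac12\Ent$ at $\rho$ is $\tfrac12\nabla \log \rho$, the choice $\psi_\eps = -\tfrac12 \log \rho$, $\theta_\eps=-1$ makes the surrogate density $\sigma_\eps$ coincide with the current iterate $\rho$ itself. With this identification, $\chi=1$ and $\theta=1$ so Assumption~\ref{assumptions:sigma-eps-onestep}(3) is automatic, and conditions (2)--(3) of the theorem encode exactly the uniform-in-$\eps$ versions of Assumption~\ref{assumptions:sigma-eps-onestep}(1),(2) along the SB iterates and along the Euler iterates; condition~(1) feeds the $\Delta_\eps$ term in Theorem~\ref{thm:one_step_convergence}.

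For consistency, i.e.\ \eqref{eq:consistency}, I decompose via the triangle inequality
\[
\Was{2}\!\of{\SB{\rho(t)}{\eps}{}, \rho(t+\eps)} \leq \Was{2}\!\of{\SB{\rho(t)}{\eps}{}, \opt{\eps}{1}{\rho(t)}} + \Was{2}\!\of{\opt{\eps}{1}{\rho(t)}, \rho(t+\eps)}.
\]
The first term is $o(\eps)$ uniformly in $t\in[0,T]$ directly from Theorem~\ref{thm:one_step_convergence}, using that $\sup_{t\in[0,T]}\Delta_\eps(\rho(t))\to 0$ (via condition~(1) and Lemma~\ref{lem:integrated-fisher-continuity} applied along the heat-flow trajectory). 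The second term is the classical consistency of the explicit Euler step for the gradient flow of $\tfrac12\Ent$, following from \cite[Proposition 8.4.6]{ambrosio2005gradient} since $v_t = -\tfrac12\nabla\log\rho(t)\in\Tan{\rho(t)}$ is the heat-flow velocity.

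The main obstacle is verifying contraction in the limit \eqref{eq:contraction} for $\SB{\cdot}{\eps}{}$. My strategy is to insert Euler steps and three-terms triangle:
\begin{align*}
\Was{2}\!\of{\SB{\SB{\rho(0)}{\eps}{k}}{\eps}{}, \SB{\rho(k\eps)}{\eps}{}} &\leq \Was{2}\!\of{\SB{\SB{\rho(0)}{\eps}{k}}{\eps}{}, \opt{\eps}{1}{\SB{\rho(0)}{\eps}{k}}} \\
 &\quad + \Was{2}\!\of{\opt{\eps}{1}{\SB{\rho(0)}{\eps}{k}}, \opt{\eps}{1}{\rho(k\eps)}} \\
 &\quad + \Was{2}\!\of{\opt{\eps}{1}{\rho(k\eps)}, \SB{\rho(k\eps)}{\eps}{}}.
\end{align*}
The outer two terms are $o(\eps)$ uniformly in $k$ by Theorem~\ref{thm:one_step_convergence}, using conditions~(1)--(3) at the SB iterates and at $\rho(k\eps)$ respectively. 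Controlling the middle term is the delicate piece: I need an approximate $W_2$-nonexpansiveness of the explicit Euler map for the heat flow, of the form $\Was{2}(\opt{\eps}{1}{\mu},\opt{\eps}{1}{\nu}) \leq (1+O(\eps))\Was{2}(\mu,\nu)$, uniformly for $\mu,\nu$ drawn from the classes $\Pi_1(C,D,N)\cap\Pi_2(\lambda,M)$. One expects this because entropy is $0$-geodesically convex and the uniform semi log-concavity from condition~(3) gives Lipschitz control on the score field $\nabla\log\rho$ appearing in $\Id - \tfrac{\eps}{2}\nabla\log\rho$. This is precisely where the auxiliary result referenced by the authors as Lemma~\ref{thm:SB_uniform_convergence} would enter; an alternative route would be to estimate $(\Id - \tfrac{\eps}{2}\nabla\log\mu)\circ T_\mu^\nu - (\Id-\tfrac{\eps}{2}\nabla\log\nu)$ in $L^2(\mu)$ via the Hilbert--Schmidt bound $\Exp{\sigma_\eps}\|\nabla^2\log\sigma_\eps\|_{HS}^2\leq M$ from $\Pi_2(\lambda,M)$, yielding the desired one-step near-contraction.

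With both conditions in hand, Theorem~\ref{thm:S_uniform_convergence} (applied with $\opt{\eps}{1}{\cdot}$ replaced by $\SB{\cdot}{\eps}{}$) delivers $\sup_{k\in[N_\eps]} \Was{2}(\SB{\rho(0)}{\eps}{k}, \rho(k\eps))\to 0$ as $\eps\downarrow 0$, and combining with the uniform continuity of $t\mapsto\rho(t)$ on $[0,T]$ upgrades this to the uniform convergence of the piecewise-constant interpolant, which is the first-order approximation property.
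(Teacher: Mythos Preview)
Your overall architecture---triangulate SB against explicit Euler and then against the flow---matches the paper's, but there are two genuine gaps in the execution.

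First, you invoke Theorem~\ref{thm:S_uniform_convergence} with $\SB{\cdot}{\eps}{}$ as the iterative map. Consistency \eqref{eq:consistency} then requires $\frac{1}{\eps}\Was{2}(\SB{\rho(t)}{\eps}{},\rho(t+\eps))\to 0$ uniformly in $t\in[0,T]$. To bound the first term in your decomposition via Theorem~\ref{thm:one_step_convergence} you need, uniformly in $t$, both $\rho(t)\in\Pi_2(\lambda,M)$ and $\Delta_\eps(\rho(t))\to 0$. Neither is available: hypothesis~(1) gives $\Delta_\eps$ control only at the \emph{SB iterates}, and Lemma~\ref{lem:integrated-fisher-continuity} is merely pointwise in the measure, not uniform over a family. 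Likewise hypothesis~(3) places only the SB iterates in $\Pi_2$. The paper avoids this mismatch by using the two-tier Lemma~\ref{thm:SB_uniform_convergence}: first show Euler iterates converge to the flow, then show SB iterates track Euler iterates. In that formulation Theorem~\ref{thm:one_step_convergence} is invoked only at SB iterates, precisely where hypotheses (1)--(3) bite. For the second term in your decomposition, \cite[Proposition 8.4.6]{ambrosio2005gradient} gives convergence only for Lebesgue-a.e.\ $t$, not uniformly; the paper replaces this with the quantitative Lemma~\ref{lem:heat-flow-sb-eps2}, which gives $\Was{2}(\opt{\eps}{1}{\rho},\rho(\eps))\le K\eps^2$ uniformly over $\Pi_1(C,D,N)$ and is the workhorse estimate throughout.

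Second, your proposed route to near-contraction of the explicit Euler map---via the Hilbert--Schmidt Hessian bound from $\Pi_2(\lambda,M)$---does not go through: that bound controls $x\mapsto\nabla\log\mu(x)$ for \emph{fixed} $\mu$, but gives no stability of $\mu\mapsto\nabla\log\mu$ under $\Was{2}$-perturbations of $\mu$. Lemma~\ref{thm:SB_uniform_convergence} does not supply this either; it assumes a contraction-type condition as a hypothesis. The paper's resolution is different: sandwich each Euler step between two short-time heat-flow steps using Lemma~\ref{lem:heat-flow-sb-eps2}, and then invoke the exact $\Was{2}$-contractivity of the heat flow \cite[Theorem 11.1.4]{ambrosio2005gradient}. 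Concretely, writing $[\mu](\eps)$ for the time-$\eps$ heat evolution of $\mu$,
\[
\Was{2}\!\of{\opt{\eps}{1}{\mu},\opt{\eps}{1}{\nu}} \le 2K\eps^2 + \Was{2}\!\of{[\mu](\eps),[\nu](\eps)} \le 2K\eps^2 + \Was{2}(\mu,\nu)
\]
for $\mu,\nu\in\Pi_1(C,D,N)$. This is exactly the near-nonexpansiveness you want for the middle term, obtained without any stability estimate on the score.
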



\begin{remark}
Theorem \ref{corollary:entropy-with-sb} lays down sufficient conditions under which \cite[Theorem 1]{sander_22} holds. 
Part of the assumptions are on the Schr\"odinger bridge iterates which are assumed to be \text{nice enough} such that our one step approximations hold uniformly. In Section \ref{sec:examples}, we show that these assumptions are indeed satisfied when $\rho(0)$ is Gaussian. We are currently unable to verify that these natural conditions must be true once sufficiently stringent conditions (say, strong log concavity and enough smoothness) are assumed on the initial measure $\rho(0)$.  
\end{remark}



We now state an analogous theorem for the Wasserstein gradient flow of relative entropy. Fix a reference measure $\nu = e^{-h} \in \cP_{2}(\mathbb{R}^{d})$ satisfying assumptions delineated in Theorem \ref{corollary:kl-with-sb} below.

Let $\opt{\eps}{1}{\cdot}$ denote the explicit Euler iteration for the Fokker-Planck equation
\begin{align}\label{eq:fokker-planck}
    \partial_{t} \rho(t) = \frac{1}{2}\nabla\cdot\left((\nabla \log \rho(t) + \nabla h)\rho(t)\right).
\end{align}
That is, for $\rho \in \cP_{2}^{ac}(\mathbb{R}^{d})$
  \begin{align}\label{eq:explicit-euler-kl}
      \opt{\eps}{1}{\rho} := \left(\Id - \frac{\eps}{2}\nabla \log \rho-\frac{\eps}{2}\nabla h\right)_{\#}\rho.
  \end{align}
For $\eps > 0$ and $k \in [N_{\eps}]$, let $\sigma_{\eps}^{k} \in \cP_{2}^{ac}(\mathbb{R}^{d})$ denote the surrogate measure obtained in the application of $\SB{\cdot}{\eps}{1}$ to $\SB{\rho(0)}{\eps}{k-1}$, and let $\theta_{\eps}^{k} \in \mathbb{R} \setminus \{0\}$ denote the corresponding integrability constant from \eqref{eq:simga-eps-onestep}. That is, $\sigma_{\eps}^{k}$ is the probability density proportional to $(\rho^{-1}\nu)^{\theta_{\eps}^{k}}$, for $\rho=\SB{\rho(0)}{\eps}{k-1}$. 


\begin{theorem}\label{corollary:kl-with-sb}
Let $h$ be convex, and assume that $\rho(0)$ is such that
  \begin{enumerate} 
      \item[(1)] $\limsup\limits_{\eps \downarrow 0} \sup\limits_{k \in [N_{\eps}]} \Delta_{\eps}(\sigma_{\eps}^{k}) = 0.$
      \item[(2)] There exists $C,D, M > 0$, $N \geq 1$, and $\lambda \in \mathbb{R}$ such that for all $\eps \in (0,\eps_0)$, $(\sigma_{\eps}^{k}, k \in [N_{\eps}]) \subset \Pi_{1}(C,D,N) \cap \Pi_{2}(\lambda,M)$, and  $\|\SB{\rho(0)}{\eps}{k}/\sigma_{k}^{\eps}\|_{\infty}, (\theta_{\eps}^{k})^{-2} \leq M$ for all $k \in [N_{\eps}]$.
      \item[(3)] (Regularity of Fokker-Planck) The collection $\mathscr{F} = (\opt{\eps}{k}{\rho(0)}, k \in \{0\} \cup [N_{\eps}]) \cup (\SB{\rho(0)}{\eps}{k}, k \in [N_{\eps}])$ is such that
      \begin{align*}
          \lim\limits_{\eps \downarrow 0} \sup\limits_{\rho \in \mathscr{F}} \frac{1}{\eps} \Was{2}(\opt{\eps}{1}{\rho},\rho(\eps)) = 0.
      \end{align*}
  \end{enumerate} 
  Then $(\SB{\rho_0}{\eps}{k},k \in [N_{\eps}])$ is a first order approximation of the gradient flow of (one-half) KL divergence, i.e.\ the solution to \eqref{eq:fokker-planck}. 
\end{theorem}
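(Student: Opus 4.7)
\textbf{Proof plan for Theorem~\ref{corollary:kl-with-sb}.}
The plan is to combine three ingredients: a uniform one-step Schr\"odinger bridge error from Theorem~\ref{thm:one_step_convergence}; a first-order explicit-Euler approximation of the Fokker--Planck flow via Theorem~\ref{thm:S_uniform_convergence}; and a Gr\"onwall-type telescoping enabled by the convexity of $h$. This mirrors the structure underlying Theorem~\ref{corollary:entropy-with-sb}, but the surrogate $\sigma_\eps^k \propto (\rho^{-1}\nu)^{\theta_\eps^k}$ now genuinely depends on $\eps$ and $k$, so it is crucial that condition~(2) locks down the Assumption~\ref{assumptions:sigma-eps-onestep} constants uniformly along the trajectory.

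\emph{Uniform one-step bound.} Fix $\eps \in (0,\eps_0)$ and $k \in [N_\eps]$. Apply Theorem~\ref{thm:one_step_convergence} to $\rho = \SB{\rho(0)}{\eps}{k-1}$ with surrogate $\sigma_\eps^k$ and integrability parameter $\theta_\eps^k$. Condition~(2) supplies the required uniform realization of Assumption~\ref{assumptions:sigma-eps-onestep}, so the constant $K$ in the statement of Theorem~\ref{thm:one_step_convergence} is independent of $k$, and
\[
\Was{2}\bigl(\SB{\SB{\rho(0)}{\eps}{k-1}}{\eps}{},\, \opt{\eps}{1}{\SB{\rho(0)}{\eps}{k-1}}\bigr) \le K\eps\Bigl(\Delta_\eps(\sigma_\eps^k)^{1/4} + \sqrt{\eps}\Bigr).
\]
Condition~(1) forces $\sup_{k\in[N_\eps]} \Delta_\eps(\sigma_\eps^k) \to 0$, so the one-step error is $o(\eps)$ uniformly in $k$.

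\emph{Explicit Euler approximates Fokker--Planck.} Consistency of $\opt{\eps}{1}{\cdot}$ in the sense of \eqref{eq:consistency} over the class $\mathscr{F}$ is exactly condition~(3). For contraction in the limit \eqref{eq:contraction}, the convexity of $h$ makes $H(\cdot\mid\nu) = \Ent(\cdot) + \int h\,d(\cdot)$ zero-geodesically convex by McCann's criterion, so the exact Fokker--Planck flow is non-expansive in $\Was{2}$. Inserting the exact flow between two Euler iterates in the triangle inequality and invoking condition~(3) at each endpoint yields, for any $\rho_1,\rho_2\in\mathscr{F}$,
\[
\Was{2}(\opt{\eps}{1}{\rho_1}, \opt{\eps}{1}{\rho_2}) \le \Was{2}(\rho_1,\rho_2) + o(\eps),
\]
which is \eqref{eq:contraction}. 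Theorem~\ref{thm:S_uniform_convergence} then gives that $(\opt{\eps}{k}{\rho(0)}, k\in[N_\eps])$ is a first-order approximation of \eqref{eq:fokker-planck}.

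\emph{Telescoping and conclusion.} Inserting $\opt{\eps}{1}{\SB{\rho(0)}{\eps}{k-1}}$ via the triangle inequality and combining the uniform one-step bound with the approximate contractivity (applied to the pair $(\SB{\rho(0)}{\eps}{k-1},\opt{\eps}{k-1}{\rho(0)})$, both of which lie in $\mathscr{F}$) gives the recursion
\[
\Was{2}(\SB{\rho(0)}{\eps}{k}, \opt{\eps}{k}{\rho(0)}) \le (1+o(\eps))\,\Was{2}(\SB{\rho(0)}{\eps}{k-1}, \opt{\eps}{k-1}{\rho(0)}) + o(\eps).
\]
Iterating over $k \le N_\eps = O(\eps^{-1})$ (standard Gr\"onwall argument) produces a $\sup_{k\in[N_\eps]}$ bound of $o(1)$, and the triangle inequality together with the Euler-to-flow convergence of the previous step completes the proof. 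The hardest part is the second step: the contraction-in-the-limit argument requires the SB iterates themselves to stay in the class $\mathscr{F}$ where the Fokker--Planck consistency~(3) holds, and for the surrogates $\sigma_\eps^k$ to satisfy Assumption~\ref{assumptions:sigma-eps-onestep} with constants that do not drift with $k$; these are precisely the structural regularity conditions encoded in (2) and (3) and are nontrivial to check in concrete examples.
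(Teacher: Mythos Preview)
Your proposal is correct and follows essentially the same route as the paper: the paper states that the proof is ``completely analogous'' to that of Theorem~\ref{corollary:entropy-with-sb}, replacing Lemma~\ref{lem:heat-flow-sb-eps2} by condition~(3), and your three steps (uniform one-step SB error via Theorem~\ref{thm:one_step_convergence}, Euler-to-flow convergence via Theorem~\ref{thm:S_uniform_convergence} using non-expansiveness of the Fokker--Planck flow from convexity of $h$, and a telescoping recursion) are exactly the ingredients the paper assembles through Assumption~\ref{assumption:uniform_convergence} and Lemma~\ref{thm:SB_uniform_convergence}. The only cosmetic difference is that the paper packages your telescoping step as an appeal to Lemma~\ref{thm:SB_uniform_convergence}, and its contraction-in-the-limit inequality is purely additive (so the $(1+o(\eps))$ factor in your recursion can be taken to be $1$), but this does not affect the argument.
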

\begin{remark}\label{rmk:OUisgood}
    These assumed regularity of the Fokker-Planck is just requiring that the convergence given in \cite[Proposition 8.4.6]{ambrosio2005gradient} holds uniformly along the measures in $\mathscr{F}$. That is, if we start Fokker-Planck from any of the measures $\rho$ in $\mathscr{F}$, the explicit Euler iteration is a good enough approximation for small times, uniformly in $\rho$. In Lemma \ref{lem:heat-flow-sb-eps2} below we show that this statement is true for the heat flow since its corresponding transition kernel is a Gaussian convolution. It is also satisfied by the flow corresponding to the standard Ornstein-Uhlenbeck (OU) process for which the transition kernel is again a Gaussian convolution. In fact, Lemma \ref{lem:heat-flow-sb-eps2} continues to hold in this case with minor modifications to the proof. 
\end{remark}

Observe the difference in the assumptions required for Theorems   \ref{corollary:entropy-with-sb} and \ref{corollary:kl-with-sb}. The new assumptions in Theorem \ref{corollary:kl-with-sb} are regarding the surrogate measures $\sigma_\eps^k$ corresponding to the Schr\"odinger iterate $(\SB{\rho(0)}{\eps}{k}$.
The reason for this difference is that for the heat flow $\sigma_\eps^k=(\SB{\rho(0)}{\eps}{k}$ and $\theta_\eps^k=-1$, for every $k\in [N_\eps]$ and any $\eps>0$. Hence the assumptions simplify.


Before proving Theorems \ref{corollary:entropy-with-sb} and \ref{corollary:kl-with-sb}, we present general conditions under which the iterates of (\ref{sb-step}), called SB scheme, converge to the gradient flow of a functional $\mathcal{F}$. For a general case, it is difficult to show that a continuous time interpolation of the SB scheme is a first order approximation of the gradient flow (Definition~\ref{def:first_order_approximation}). The following general conditions ensure uniform convergence to the gradient flow via Euler iterations. We then restrict ourselves to the specific cases when $\cF$ is entropy and relative entropy. 

\begin{assumption} \label{assumption:uniform_convergence}
Fix $\mathcal{F}: \cP_{2}(\mathbb{R}^{d}) \to (-\infty,+\infty]$ be a geodesically convex functional and fix $T > 0$. With initial measure $\rho(0) \in \cP_{2}^{ac}(\mathbb{R}^{d})$, let $(\rho(t), t \geq 0)$ denote the gradient flow of $\mathcal{F}$. Let $\left(\rho_{\eps}(k\eps) = \opt{\eps}{k}{\rho_0}, k \in \left[N_{\eps}\right]\right)$ be a sequence of Euler iterations that gives rise to a first order approximation of the gradient flow of $\cF$ satisfying Definition \ref{def:first_order_approximation}. Moreover, assume that:
\begin{itemize}
    \item[(1)] there is \textbf{consistency} of $\opt{\eps}{1}{\cdot}$ along the $\of{\SB{\rho_0}{\eps}{k}, k \in [N_\eps]}$, meaning
    \begin{align}\label{eq:sb-consistency}
        \limsup\limits_{\eps \downarrow 0}\sup\limits_{k \in [N_\eps]}\frac{1}{\eps}\Was{2}\left(\SB{\rho_0}{\eps}{k},\opt{\eps}{1}{\SB{\rho_0}{\eps}{k-1}}\right) = 0.
    \end{align}
    \item[(2)] There is \textbf{contraction in the limit}, meaning 
    \begin{align}\label{eq:sb-contraction}
        \limsup\limits_{\eps \to 0}\sup\limits_{k \in [N_{\eps}]}\frac{1}{\eps}\left(\Was{2}\left(\opt{\eps}{k}{\rho_0},\opt{\eps}{1}{\SB{\rho_0}{\eps}{k-1}}\right)-\Was{2}\left(\opt{\eps}{k-1}{\rho_0},\SB{\rho_0}{\eps}{k-1}\right)\right) &\leq 0. 
    \end{align}
\end{itemize}
\end{assumption}
These assumptions are reminiscent of \eqref{eq:consistency} and \eqref{eq:contraction}. Hence, the proof of the following lemma is very similar to that of Theorem \ref{thm:S_uniform_convergence}.

\begin{lemma}\label{thm:SB_uniform_convergence}
Under Assumption \ref{assumption:uniform_convergence}, 
\begin{align*}
    \lim\limits_{\eps \downarrow 0} \sup\limits_{k \in \left[N_{\eps}\right]} \Was{2}(\rho(k\eps),\SB{\rho_0}{\eps}{k}) = 0.
\end{align*}
\end{lemma}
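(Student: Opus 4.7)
The plan is to follow the same blueprint as the proof of Theorem \ref{thm:S_uniform_convergence}, with the roles shifted by one level: the Euler iterates $\opt{\eps}{k}{\rho_0}$ now play the role that the true gradient flow $(\rho(t))$ played before, and the Schr\"odinger bridge iterates $\SB{\rho_0}{\eps}{k}$ play the role previously held by the Euler iterates. First I would apply the triangle inequality
\begin{equation*}
\Was{2}\of{\rho(k\eps), \SB{\rho_0}{\eps}{k}} \leq \Was{2}\of{\rho(k\eps), \opt{\eps}{k}{\rho_0}} + \Was{2}\of{\opt{\eps}{k}{\rho_0}, \SB{\rho_0}{\eps}{k}}.
\end{equation*}
The first term vanishes uniformly in $k \in [N_{\eps}]$ as $\eps \downarrow 0$, since $(\opt{\eps}{k}{\rho_0}, k \in [N_{\eps}])$ is a first order approximation of $(\rho(t))$ by hypothesis (Definition \ref{def:first_order_approximation}). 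It therefore suffices to control the second term.

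Next, fix $\delta > 0$ and perform a second triangle inequality through the intermediate point $\opt{\eps}{1}{\SB{\rho_0}{\eps}{k-1}}$:
\begin{equation*}
\Was{2}\of{\opt{\eps}{k}{\rho_0}, \SB{\rho_0}{\eps}{k}} \leq \Was{2}\of{\opt{\eps}{k}{\rho_0}, \opt{\eps}{1}{\SB{\rho_0}{\eps}{k-1}}} + \Was{2}\of{\opt{\eps}{1}{\SB{\rho_0}{\eps}{k-1}}, \SB{\rho_0}{\eps}{k}}.
\end{equation*}
The consistency assumption \eqref{eq:sb-consistency} bounds the last summand by $\delta \eps$ uniformly in $k \in [N_{\eps}]$ for all sufficiently small $\eps$, and the contraction-in-the-limit assumption \eqref{eq:sb-contraction} bounds the first summand by $\Was{2}\of{\opt{\eps}{k-1}{\rho_0}, \SB{\rho_0}{\eps}{k-1}} + \delta \eps$ for such $\eps$. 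Combining these produces the one-step recursion
\begin{equation*}
\Was{2}\of{\opt{\eps}{k}{\rho_0}, \SB{\rho_0}{\eps}{k}} \leq \Was{2}\of{\opt{\eps}{k-1}{\rho_0}, \SB{\rho_0}{\eps}{k-1}} + 2\delta \eps.
\end{equation*}

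Finally, iterating the recursion from the trivial initial condition $\Was{2}(\rho_0, \rho_0) = 0$ over at most $N_{\eps} \leq T/\eps$ steps yields $\sup_{k \in [N_{\eps}]} \Was{2}\of{\opt{\eps}{k}{\rho_0}, \SB{\rho_0}{\eps}{k}} \leq 2T\delta$ for all sufficiently small $\eps$. Since $\delta > 0$ was arbitrary, this quantity tends to zero as $\eps \downarrow 0$, and combining with the first bound above completes the proof. There is no genuine obstacle here beyond bookkeeping; the only point demanding care is that both the consistency error and the contraction slack are controlled \emph{uniformly} in $k \in [N_{\eps}]$, which is precisely why Assumption \ref{assumption:uniform_convergence} is stated with an outer supremum over $k$ rather than as a pointwise-in-$k$ statement.
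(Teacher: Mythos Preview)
Your proof is correct and follows essentially the same approach as the paper: both apply the triangle inequality through $\opt{\eps}{k}{\rho_0}$, invoke the first-order approximation hypothesis for one piece, and then split the remaining piece through $\opt{\eps}{1}{\SB{\rho_0}{\eps}{k-1}}$ to obtain the same $2\delta\eps$ recursion exactly as in Theorem \ref{thm:S_uniform_convergence}. Your justification for the vanishing of $\Was{2}\of{\rho(k\eps),\opt{\eps}{k}{\rho_0}}$ directly from the first-order approximation hypothesis is arguably cleaner than the paper's citation of Theorem \ref{thm:S_uniform_convergence}, since the convergence is already part of Assumption \ref{assumption:uniform_convergence}.
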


\begin{proof}
    To start, observe that 
    \begin{align}\label{eq:tri-arg}
    \sup_{k \in [N_{\eps}]}\Was{2}(\SB{\rho_0}{\eps}{k},\rho(k\eps)) &\leq \sup_{k \in [N_{\eps}]}\Was{2}(\SB{\rho_0}{\eps}{k},\opt{\eps}{k}{\rho_0}{}) + \sup_{k \in [N_{\eps}]}\Was{2}(\opt{\eps}{k}{\rho_0}{},\rho(k\eps)).
\end{align}
By Theorem \ref{thm:S_uniform_convergence}, the rightmost term vanishes as $\eps \downarrow 0$. For the remaining term, for $k \in [N_{\eps}]$ split it as
\begin{align*}
    \Was{2}(\SB{\rho_0}{\eps}{k},\opt{\eps}{k}{\rho_0}) &\leq \Was{2}\left(\SB{\rho_0}{\eps}{k},\opt{\eps}{1}{\SB{\rho_0}{\eps}{k-1}}\right) + \Was{2}\left(\opt{\eps}{1}{\SB{\rho_0}{\eps}{k-1}},\opt{\eps}{k}{\rho_0}{}\right).
\end{align*}
By Assumption \ref{assumption:uniform_convergence}(2), one obtains the exact same recursion as in Theorem \ref{thm:S_uniform_convergence}. Hence, the same argument gives that $\lim\limits_{\eps \downarrow 0} \sup\limits_{k \in [N_{\eps}]}\Was{2}(\SB{\rho_0}{\eps}{k},\opt{\eps}{k}{\rho_0}{}) = 0$, completing the Lemma. 
\end{proof}

We now begin the proof of Theorem \ref{corollary:entropy-with-sb}, which follows as an application of Lemma \ref{thm:SB_uniform_convergence} once we verify Assumption \ref{assumption:uniform_convergence} holds in this setting. The verification is a lengthy argument that relies on a tight approximation at small time of the explicit Euler step given in \eqref{eq:explicit-euler-heat} and the heat flow, which we present in Lemma below.

\begin{lemma}\label{lem:heat-flow-sb-eps2}
    Let $\rho = e^{-g} \in \Pi_{1}(C,D,N)$.
    Let $\opt{\eps}{1}{\cdot}$ be as defined in \eqref{eq:explicit-euler-heat}, and $(\rho(t),t \geq 0)$ the solution to \eqref{eq:heat-eqn} with $\rho(0) =\rho$. Fix $\eps_0 > 0$, then there is a constant $K > 0$ depending on $C$, $D$, and $n$ such that for any $\eps \in (0,\eps_0)$
    \begin{align*}
        \Was{2}(\opt{\eps}{1}{\rho},\rho(\eps)) \leq K \eps^{2}.
    \end{align*}
\end{lemma}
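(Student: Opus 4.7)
The plan is a direct second-order Taylor expansion along the Lagrangian characteristic flow of the heat equation. Since the heat semigroup immediately smooths, for $t > 0$ the density $\rho(t)$ is $C^{\infty}$ with moderately growing log-density $g_t := -\log \rho(t)$, so the Wasserstein velocity $v_t := -\tfrac{1}{2}\nabla \log \rho(t) = \tfrac{1}{2}\nabla g_t$ generates a well-defined characteristic flow $\dot{\phi}_t(x) = v_t(\phi_t(x))$, $\phi_0 = \Id$, satisfying $(\phi_t)_{\#}\rho = \rho(t)$. Coupling $X \mapsto \bigl(\phi_\eps(X),\, X + \eps v_0(X)\bigr)$ for $X \sim \rho$ gives the starting bound
\[
  \Was{2}^{2}\bigl(\rho(\eps),\opt{\eps}{1}{\rho}\bigr) \leq \left\|\phi_\eps - \Id - \eps v_0\right\|_{L^{2}(\rho)}^{2}.
\]

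Next, I would compute $\frac{d}{dt}[v_t\circ \phi_t]$ explicitly. The heat equation $\partial_t \rho(t) = \tfrac12 \Delta \rho(t)$ yields the Hopf--Cole type identity $\partial_t g_t = -\tfrac12\bigl(|\nabla g_t|^{2} - \Delta g_t\bigr)$, hence $\partial_t v_t = -\tfrac14\nabla\bigl(|\nabla g_t|^{2} - \Delta g_t\bigr)$. Combined with $D v_t \cdot v_t = \tfrac18 \nabla|\nabla g_t|^{2}$ this yields the striking identity
\[
  \partial_t v_t + D v_t \cdot v_t \;=\; -\nabla\!\left(\tfrac{1}{8}|\nabla g_t|^{2} - \tfrac{1}{4}\Delta g_t\right) \;=\; -\nabla \cU_t,
\]
where $\cU_t$ is the harmonic characteristic of $\rho(t)$. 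Thus $\frac{d}{dt}[v_t(\phi_t(x))] = -\nabla\cU_t(\phi_t(x))$, and integrating twice in time (Fubini) gives the exact remainder
\[
  \phi_\eps(x) - x - \eps v_0(x) \;=\; -\int_{0}^{\eps}(\eps - s)\,\nabla \cU_s(\phi_s(x))\, ds.
\]
Taking $L^{2}(\rho)$ norms, applying Minkowski, and using $(\phi_s)_{\#}\rho = \rho(s)$ then gives
\[
  \|\phi_\eps - \Id - \eps v_0\|_{L^{2}(\rho)} \;\leq\; \frac{\eps^{2}}{2}\sup_{s \in [0,\eps_0]}\|\nabla \cU_s\|_{L^{2}(\rho(s))}.
\]
At $s = 0$, membership $\rho \in \Pi_1(C,D,N)$ directly controls $\|\nabla \cU_0\|_{L^2(\rho)}^2 \leq C\int(1+\|x\|^N)\,d\rho$, which is finite with a constant depending only on $C$, $D$, $N$, $d$ via the subexponential bound $\|\rho\|_{\psi_1}\le D$.

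The main obstacle is extending this $L^{2}$ bound uniformly to $s \in (0,\eps_0]$. Here the Gaussian convolution structure of the heat flow is essential: if $Z \sim \cN(0,\Id)$ is independent of $X \sim \rho$, then $\rho(s)$ is the law of $X + \sqrt{s}\,Z$ and the derivatives of $g_s$ admit Tweedie-type representations as conditional expectations, for instance $\nabla g_s(y) = \mathbb{E}[\nabla g(X) \mid X + \sqrt{s}\,Z = y]$, with analogous conditional-moment formulas for $\nabla^{2}g_s$ and $\nabla^{3}g_s$ expressed through the derivatives of $g$. Applying Jensen's inequality to these representations transfers the polynomial growth and subexponential moment control from $\Pi_1(C,D,N)$ to $\nabla\cU_s$ uniformly in $s\in[0,\eps_0]$, producing a finite supremum that depends only on $C$, $D$, $N$, $d$, and $\eps_0$. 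Once this uniform $L^{2}$ estimate is in hand, the lemma follows immediately with $K = \tfrac12 \sup_{s\in[0,\eps_0]}\|\nabla \cU_s\|_{L^{2}(\rho(s))}$.
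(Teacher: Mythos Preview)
Your approach is essentially identical to the paper's: both couple $\rho(\eps)$ and $\opt{\eps}{1}{\rho}$ through the characteristic flow $\phi_t$ (the paper calls it the particle system $X_t$), derive the key ``acceleration'' identity $\tfrac{d}{dt}[v_t\circ\phi_t]=-\nabla\cU_t\circ\phi_t$ from the heat equation, integrate twice in time to produce an $\eps^2$ remainder, and then control $\sup_{s\in[0,\eps_0]}\|\nabla\cU_s\|_{L^2(\rho(s))}$ via the Gaussian-convolution structure of the heat semigroup and the subexponential moment bound. The only cosmetic difference is that the paper writes the remainder as a double integral and applies Jensen (yielding the constant $\eps^4/3$ instead of your $\eps^4/4$), and asserts a uniform pointwise polynomial bound on $\nabla\cU_t$ rather than your Tweedie-type $L^2$ argument, but the substance is the same.
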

\begin{proof}
    We consider two evolving particle systems. Let $(\rho(t,\cdot), t\geq 0)$ denote the (probabilistic) heat flow starting from $\rho$, and set $g(t,x) = -\log \rho(t,x)$. 
    Let $X, Y: [0,\infty) \times \mathbb{R}^{d} \to \mathbb{R}^{d}$ be two particle systems with the same initial configuration $X_0 \sim \rho$ solving the ODE (letting $\dot{X}_{t}$ denote time derivative)
\begin{align*}
    \dot{X}_{t} = \frac{1}{2}\nabla g(t,X_{t}) \text{ and } \dot{Y}_{t} = \frac{1}{2}\nabla g(0,X_0). 
\end{align*}
Observe that $X_{\eps} \sim \rho(\eps)$ and $Y_{\eps} \sim \opt{\eps}{1}{\rho}$. Now, define $G: [0,+\infty) \times \mathbb{R}^{d} \to \mathbb{R}^{d}$ by $G(t,x) = X(t,x) - Y(t,x)$. Observe that $G(0,\cdot) = \dot{G}(0,\cdot) = 0$, giving that
\begin{align}\label{eq:iter-int-for-G}
    G(t) &= \int_{0}^{t}\int_{0}^{s} \ddot{G}(u) du ds. 
\end{align}
By the chain rule and the fact that $(\rho(t,\cdot),t \geq 0)$ solves the heat equation, observe that 
\begin{align*}
    \ddot{G}(t,X_{t}) &= \frac{1}{2}\partial_{t}\nabla_{x}g(t,X_{t}) \\
    &= \frac{1}{2}\left(\nabla_{x}^{2}g(t,X_{t})\dot{X}_{t}-\frac{1}{2}\nabla_{x} \left[\frac{\Delta_{x} \rho(t,X_{t})}{\rho(t,X_{t})}\right]\right).
\end{align*}
In analogy with Section \ref{sec:sb-ld}, define
\begin{align*}
    \cU(t,u) &= \frac{1}{8}\|\nabla_{x} g(t,u)\|^2-\frac{1}{4}\Delta_{x} g(t,u), 
\end{align*}
and compute that
\begin{align*}
    \frac{1}{4}\nabla_{x}\left[\frac{\Delta \rho(t,u)}{\rho(t,u)}\right] = \nabla_{x}\left(\frac{1}{8}\|\nabla_{x} g(t,u)\|^2+\cU(t,u)\right) = \frac{1}{4}\nabla_{x}^{2} g(t,u) \nabla g(t,u)+\nabla_{x}\cU(t,u).  
\end{align*}
Altogether, it now holds that
\begin{align}\label{eq:intacc}
    \ddot{G}(t,X_t) = -\nabla_{x}\cU(t,X_t).
\end{align}
For a fixed $\eps_0 > 0$, there uniform constant $C' > 0$ and $n' \geq 1$ (potentially slightly modified from $C$ and $n$) such that for all $t \in [0,\eps_0]$, $\|\nabla_{x} \cU(t,x)\|^{2} \leq C'(1+\|x\|^{n'})$. Let $X_0 \sim \rho$ and $Z \sim N(0,\Id)$ be independent, then for $t \geq 0$ the random variable $X_0 + \eps Z$ has law $\rho(t)$. Moreover, as $\|X_0 + t Z\|_{\psi_1} \leq D + t \|Z\|_{\psi_{1}}$, the $\rho(t)$ have bounded subexponential norm over $t \in [0,\eps_0]$.
Hence, there exists a constant $K'$ as described in the Lemma such that for $t \in [0,\eps_0]$, $\Exp{\rho(t)}\|\nabla_{x} \cU(t,X)\|^{2} \leq K'$. Hence, applying Jensen's inequality twice to \eqref{eq:iter-int-for-G} gives
\begin{align*}
    \Was{2}^{2}(\opt{\eps}{1}{\rho},\rho(\eps)) &\leq \Exp{}\|G(\eps,X_{0})\|^{2} \leq \int_{0}^{\eps}\int_{0}^{s} \eps s \Exp{}[\|\ddot{G}(u,X_{u})\|^{2}] du \, ds \leq \frac{\eps^{4}}{3}K',
\end{align*}
establishing the Lemma. 
\end{proof}

We now prove Theorem \ref{corollary:entropy-with-sb}. 
\begin{proof}[Proof of Theorem \ref{corollary:entropy-with-sb}]
It suffices to verify that Assumption \ref{assumption:uniform_convergence} holds in this setting. 

\textbf{Step 1: Explicit Euler Iterations are Consistent and Contractive in the Limit.} Since $\rho(0)$ is assumed to be in $\Pi_1(C,D,N)$, by Lemma \ref{lem:heat-flow-sb-eps2}, along the heat flow starting at $\rho(0)$, there is a uniform constant $K > 0$ such that, for any $\eps > 0$ small enough and for all $t \in [0,T-\eps]$, $\Was{2}(\opt{\eps}{1}{\rho(t)}, \rho(t+\eps)) \leq K\eps^2$. 

Next, we establish contractivity in the limit. For a measure $\mu \in \cP(\mathbb{R}^{d})$ let $[\mu](t)$ denote the time marginal of the heat flow \eqref{eq:heat-eqn} at time $t$ started from $\mu$. By contractivity of the heat flow with respect to initial data in the Wasserstein-2 metric \cite[Theorem 11.1.4]{ambrosio2005gradient},
\begin{align*}
    \Was{2}([\opt{\eps}{k}{\rho(0)}](\eps),\rho((k+1)\eps)) \leq \Was{2}(\opt{\eps}{k}{\rho(0)},\rho(k\eps)).
\end{align*}
It then follows from triangle inequality
\begin{align*}
    \Was{2}\left(\opt{\eps}{1}{\opt{\eps}{k}{\rho(0)}},\opt{\eps}{1}{\rho(k\eps)}\right) &\leq \Was{2}(\opt{\eps}{k}{\rho(0)},\rho(k\eps)) + \Was{2}(\opt{\eps}{1}{\opt{\eps}{k}{\rho(0)}},[\opt{\eps}{k}{\rho(0)}](\eps))\\
    &+\Was{2}(\opt{\eps}{1}{\rho(k\eps)},\rho((k+1)\eps)).
\end{align*}
Hence, by assumptions in the theorem statement, for $\eps$ small enough there is a universal constant $K > 0$ such that for all $k \in [N_{\eps}]$
\begin{align*}
     \left(\Was{2}\left(\opt{\eps}{1}{\opt{\eps}{k}{\rho(0)}},\opt{\eps}{1}{\rho(k\eps)}\right) - \Was{2}\left(\opt{\eps}{k}{\rho(0)},\rho(k\eps)\right)\right) \leq 2K\eps^2, 
\end{align*}
establishing contraction in the limit. This demonstrates that the explicit Euler iterations converge to the heat flow.

\textbf{Step 2: SB Iterations are Consistent and Contractive in the Limit.} To start, fix $\eps > 0$ and $k \geq 1$. To compute $\SB{\rho_0}{\eps}{k}$ from $\SB{\rho_0}{\eps}{k-1}$, we must obtain the surrogate measure for \eqref{sb-step}. Pick $\theta_{\eps} = -1$, then the surrogate measure is the measure itself, that is, $\sigma_{\eps} = \SB{\rho_0}{\eps}{k-1}$. This in turn gives $\|\SB{\rho_0}{\eps}{k-1}/\sigma_{\eps}\|_{\infty} = 1$. For consistency, by Theorem \ref{thm:one_step_convergence} and the assumptions in the statement of Theorem \ref{corollary:entropy-with-sb}, for small enough $\eps > 0$ there is a universal constant $K > 0$ such that for all $k \in [N_{\eps}]$ with $k \geq 1$
\begin{align*}
    \Was{2}\left(\SB{\rho(0)}{\eps}{k},\opt{\eps}{1}{\SB{\rho(0)}{\eps}{k-1}}\right) &\leq \eps K \left(I(\SB{\rho(0)}{\eps}{k-1})-\int_{0}^{1}I((\SB{\rho(0)}{\eps}{k-1})_{s}^{\eps})ds\right)^{1/4}.
\end{align*}
By the first assumption in the Theorem statement, consistency holds. Contractivity in the limit holds by the triagle inequality (as in the previous argument) and applying Lemma \ref{lem:heat-flow-sb-eps2}.
\end{proof}

The proof of Theorem \ref{corollary:kl-with-sb} is completely analogous to the proof of Theorem \ref{corollary:entropy-with-sb} and thus omitted. The only modification is that we replace the role of Lemma \ref{lem:heat-flow-sb-eps2} with the assumed regularity of Fokker-Planck equation in Theorem \ref{corollary:kl-with-sb}(3). See also Remark \ref{rmk:OUisgood}.

\section{Examples}\label{sec:examples}

In this section we compute explicit examples of iterated Schr\"{o}dinger bridge approximations to various gradient flows and other absolutely continuous curves on the Wasserstein space. Since Schr\"odinger bridges are known in closed form for Gaussian marginals (\cite{janati2020}), our computations are limited to Gaussian models. First, we consider gradient flows of two convex functionals - 1) entropy and 2) KL divergence. Second, while not being gradient flows and aligning with our theoretical analysis in Section~\ref{sec:schro-bridge-scheme}, we also consider time reversal of these gradient flows to showcase the curiously nice approximation properties of the \eqref{sb-step} scheme for these absolutely continuous curves in the Wasserstein space. These latter examples have recently become important in applications. For example, see \cite{song2020score} for an application to diffusion based generative models in machine learning.

\subsection{Explicit Euler discretization of the gradient flow of Entropy} \label{sec:example_entropy}

We show via explicit calculations that \eqref{sb-step} iterations are close to the explicit Euler approximation of the heat flow $(\rho(t), t\in [0,T])$ starting from a Gaussian density. Here $\cF = \frac{1}{2} \Ent$ and the continuity equation for the gradient flow of $\cF$ is $\partial_t \rho(t) + \nabla \cdot \of{v(t) \rho(t)} = 0$ where $v(t) = -\frac{1}{2}\nabla \log \rho(t)$. The explicit Euler iterations, $(\rho_\eps(k), k\in[N_\eps])$, where $N_\eps := \lfloor T\eps^{-1} \rfloor$, are given by $\rho_\eps(k+1) = \of{\Id + \eps v_\eps(k)}_\#\rho_\eps(k)$ where $v_\eps(k) = -\frac{1}{2} \nabla \log \rho_\eps(k)$.

For $\rho_\eps(0) = \rho = \cN(0,\eta^2 I_d)$, $\eta^2>0$, we will calculate the \eqref{eq:GA} and \eqref{sb-step} iterates and show that the SB step is an $\cO(\eps^2)$ approximation of the explicit Euler step.
Recall that the form of $\Schro$ bridge with equal marginals $\rho$ is explicitly given by \cite[Theorem 1]{janati2020} as
\[
\SBstatic{\rho}{\eps} = \cN\of{\begin{pmatrix}
    0\\0
\end{pmatrix}, \eta^2\begin{pmatrix}
    I_d & C^\eps_{\eta^2}I_d\\
      C^\eps_{\eta^2}I_d & I_d 
\end{pmatrix}} \quad \text{ where } \quad  C^\eps_{\eta^2} = \frac{1}{\eta^2}\of{\sqrt{\eta^4 +\frac{\eps^2}{4}} - \frac{\eps}{2}}\,.
\]
From this the barycentric projection can be computed as
\begin{align}\label{eq:gaussian_bp}
    \BPbase{\rho}{\eps}(x) = \E{\SBstatic{\rho}{\eps}}{Y|X=x} = C^\eps_{\eta^2}x\,.
\end{align}

For the choice of $\rho$, $\SB{\rho}{\eps}{1}=(2\Id - \BPbase{\rho}{\eps})_\#\rho = \cN\of{0, \left(2 - C^\eps_{\eta^2}\right)^2 \eta^2 I_d}$, and the corresponding $\rho_{\eps}(1) = \of{\Id - \frac{\eps}{2} \nabla \log \rho}_\#\rho = \cN\of{0, \of{1 + \frac{\eps}{2\eta^2}}^2 \eta^2 I_d}$.

Now let's check the sharpness of Theorem \ref{thm:one_step_convergence}. By the explicit form of the $\Was{2}$ distance between two Gaussians 
\[
\Was{2}\of{\rho_\eps(1), \SB{\rho}{\eps}{1}} = d \abs{(2 - C^\eps_{\eta^2}) \eta - (1 + \eps \eta^{-2})\eta} = d\eta \abs{\sqrt{1 + \frac{\eps^2}{4\eta^4}} - 1}\,.
\]
Therefore, $\Was{2}\of{\rho_\eps(1), \SB{\rho}{\eps}{1}} = \cO(\eps^2)$, which an order of magnitude sharper than Theorem~\ref{thm:one_step_convergence}.

 Since both \eqref{sb-step} and \eqref{eq:GA} schemes operate via linear pushforwards, then all steps of these schemes are mean-zero Gaussian distributed. For the SB scheme, the surrogate measure at each step is the same as the current measure, i.e.\ $\sigma_\eps(k ) = \SB{\rho}{\eps}{k}$ for all $k\in[N_\eps]$.
 Let $\SB{\rho_0}{\eps}{k} = \cN\of{0, \alpha_k^2 I_d}$ and $\rho_{\eps}(k) = \cN\of{0, \beta_k^2 I_d}$, then the iterates evolve through the following recursive relationship 
\[
\alpha_{k+1}^2 = \of{2 - C^\eps_{\alpha_k^2}}^2 \alpha_k^2 \quad \text{ and } \quad \beta_{k+1}^2 = \of{1 + \frac{\eps}{2\beta_k^2}}^2 \beta_k^2\,. 
\]

To show that $(\rho_\eps(k), k\in[N_\eps])$ and $(\SB{\rho}{\eps}{k}, k\in[N_\eps])$ are first-order approximations (Definition~\ref{def:first_order_approximation}) of the Wasserstein gradient flow $(\rho(t), t\in[0,T])$, we will show that the assumptions of Theorem~\ref{corollary:entropy-with-sb} are satisfied.


First, let us consider assumption (1) of Theorem~\ref{corollary:entropy-with-sb}.
The formula for the time marginals of the dynamic Schr\"odinger bridge between $\sigma = \cN(0, \eta^2)$ and itself at time $t=1/2$ is available from \cite{gentil2017analogy} and given by
\begin{equation}\label{eq:entropic_interpolation}
    (\sigma)^\eps_{1/2} = \cN\of{0, \of{\frac{\delta_\eps^2}{4(1+\delta_\eps)}  + 1} \eta^2} \quad \text{ where } \delta_\eps = \frac{1}{2}\of{\eps -2 + \sqrt{4 + \eps^2}}\,.
\end{equation}
Also, it known that for $\sigma = \cN(0, \eta^2 I_d)$, the Fisher information $I(\sigma) = d \eta^{-2}$.
Therefore, for $\sigma_\eps(k) = \SB{\rho}{\eps}{k} = \cN(0, \alpha_k^2)$, we have
\begin{equation}
    I(\SB{\rho}{\eps}{k}) - I((\SB{\rho}{\eps}{k})_{1/2}^\eps) = \frac{d}{\alpha_k^2}\of{1 - \of{\frac{\delta_\eps^2}{4(1 + \delta_\eps)} + 1}^{-1}}  = \frac{d \delta_\eps^2}{\alpha_k^2\of{\delta_\eps^2 + 4(1 + \delta_\eps)} }\,.
\end{equation}
Since $C^\eps_{\nu^2} \leq 1$ for any $\nu^2 > 0$, $(\alpha_k^2, k\in[N_\eps])$ is monotonically increasing and bounded from below by $\eta^2$. Hence, 
\[
\sup_{k\in [N_\eps]} \left( I(\SB{\rho}{\eps}{k}) - I((\SB{\rho}{\eps}{k})_{1/2}^\eps) \right) \le I(\rho) - I(\rho_{1/2}^\eps) = \frac{d \delta_\eps^2}{\eta^2\of{\delta_\eps^2 + 4(1 + \delta_\eps)} }\,.
\]
Since the RHS above converges to zero as $\eps \rightarrow 0+$, we satisfy Assumption (1) of Theorem~\ref{corollary:entropy-with-sb} with rate $\cO(\eps^2)$. 

Now, we consider assumption (2) of Theorem~\ref{corollary:entropy-with-sb}. One can be convinced that for the Gaussian case, if both sequences $(\alpha_k^2, k\in[N_\eps])$ and $(\beta_k^2, k\in[N_\eps])$ are bounded from below and above, then the assumption (2) is satisfied. Trivially, $(\alpha_k^2, k\in[N_\eps])$ and $(\beta_k^2, k\in[N_\eps])$ are bounded from below by $\eta^2$ because they are monotonically increasing. Take $\eps \leq \eta^2$. Using the identity $1 - x \leq \sqrt{1 + x^2}$ for all $x\geq 0$, we have that $2 - C^\eps_{\alpha_k^2} \leq 1 + \eps \alpha_{k}^{-2}$ and therefore $\alpha_{k+1}^2 \leq \alpha_k^2 + 2\eps + \eps^2/\alpha_k^2 \leq \alpha_k^2 + 3\eps$. Consequently, $(\alpha_k^2, k\in[N_\eps])$ is bounded from above by $\eta^2 + 3T$. Now, again using that $\eps \leq \eta^2$, $\beta_{k+1}^2 \leq \beta_k^2 + 5\eps/4$, and therefore $(\beta_k^2, k\in[N_\eps])$ is bounded from above by $\eta^2 + 5T/4$. Step 1 of the proof of Theorem~\ref{corollary:entropy-with-sb} gives uniform convergence of $(\rho_\eps(k), k\in[N_\eps])$ and step 2 gives the uniform convergence of $(\SB{\rho}{\eps}{k}, k\in[N_\eps])$.

\subsection{Explicit Euler discretization of the gradient flow of Kullback-Leibler}\label{sec:example_kl}
Now let us consider the Kullback-Leibler (KL) divergence functional $\kl{\cdot}{\nu}$ where $\nu = e^{-g}$ for $\nabla g(x) = x$. This is the Fokker-Planck equation corresponding to the Ornstein-Uhlenbeck process. 

Now let us write the \eqref{sb-step} step at $\rho \in \cP_{2}^{ac}(\mathbb{R}^{d})$. For this functional, the selection of $\theta \in \mathbb{R} \setminus \{0\}$ in (\ref{sb-step}) becomes more delicate. The choice of $\theta$ for the surrogate measure $\sigma = \sigma_\eps(0)$ is determined by the integrability of  
\begin{align*}
    \exp\left(-2\theta \left(\frac{1}{2}\log \rho + \frac{1}{2}g\right)\right) = (\rho e^{g})^{-\theta}.
\end{align*}
If $\rho$ is log-concave, then since $g$ is convex the sign of $\theta$ depends on a comparison of the log-concavity of $\rho$ and $\nu$. Suppose $\rho$ is more log-concave than $\nu$, then $\theta$ may be taken to be $-1$. 
Consequently, the (\ref{sb-step}) step is $\SB{\rho}{\eps}{1} = (2\Id - \BPbase{\sigma}{\eps})_{\#}\rho$.
On the other hand, if $\nu$ is more log-concave than the density $\rho$, then $\theta$ may be taken to be $1$. 
In this case, the \eqref{sb-step} step is $\SB{\rho}{\eps}{1} = (\BPbase{\sigma}{\eps})_{\#}\rho$.

For an initial $\rho = \cN(0, \eta^2 I_d)$, $\eta^2>0$, we calculate the \eqref{sb-step} and the \eqref{eq:GA} step and prove that the \eqref{sb-step}  step is an $\cO(\eps^2)$ approximation of the \eqref{eq:GA} step. 
If $\eta^2 > 1$, then $\rho$ is less log-concave than $e^{-g}$, and therefore $\sigma = \rho^{-1} e^{-g} = \cN(0, \eta^2/(\eta^2-1) I_d)$. This surrogate measure satisfies Assumption~\ref{assumptions:sigma-eps-onestep}. By \eqref{eq:gaussian_bp}, we have that $\SB{\rho}{\eps}{1} = \cN\of{0, (C_{\eta^2/(\eta^2-1)}^\eps)^2\eta^2 I_d}$.
Now if $\eta^2 < 1$, then $\rho$ is more log-concave than $e^{-g}$, and therefore $\sigma = \rho e^g = \cN(0, \eta^2/(1-\eta^2) I_d)$. Note that this choice satisfies Assumption~\ref{assumptions:sigma-eps-onestep} (1-2, 4) but the ratio $\frac{\rho}{\sigma} = \rho^2 e^{g}$ is unbounded. Nevertheless, we calculate the \eqref{sb-step} update using this surrogate measure and demonstrate one-step convergence, indicating that our assumptions in this paper are stronger than required.
By \eqref{eq:gaussian_bp} we have $\SB{\rho}{\eps}{1} = \cN(0, (2 - C_{\eta^2/(1-\eta^2)}^\eps)^2\eta^2 I_d)$.
The corresponding \eqref{eq:GA} update is $\rho_\eps(1) = \of{\Id - \frac{\eps}{2}\of{\nabla \log \rho + \Id}}_\#\rho = \cN\of{0, \of{1 + \frac{\eps (1 - \eta^2)}{2 \eta^2}}^2 \eta^2 I_d}$.

Again, we check the sharpness of Theorem~\ref{thm:one_step_convergence} by explicitly calculating the $\Was{2}$ distance between $\rho_\eps(1)$ and $\SB{\rho}{\eps}{1}$.
For any $\eta^2 > 0$, 
\begin{align*}
    \Was{2}\of{\rho_\eps(1), \SB{\rho_0}{\eps}{1}} &= \eta d\abs{2 - C^\eps_{\eta^2/(1-\eta^2)} - 1 - \frac{\eps (1-\eta^2)}{2\eta^2}} = \eta d\abs{1 - \sqrt{1 + \frac{\eps^2 (1-\eta^2)^2}{4 \eta^4}}}\,.
\end{align*}
Therefore, again $\Was{2}\of{\rho_\eps(1), \SB{\rho_0}{\eps}{1}} = \cO(\eps^2)$ which is an order of magnitude better than Theorem~\ref{thm:one_step_convergence}.
As mentioned above, here is an example of a fast convergence rate even when the surrogate measure $\sigma$ does not satisfy Assumption~\ref{assumptions:sigma-eps-onestep}(3).

Iterates of the \eqref{sb-step} scheme can be defined beyond the one step in the similar manner if the sign convention remains same throughout the iterative process. The following calculation is done for $\eta^2 < 1$ which is not covered by our theorem. A similar calculation is valid for $\eta^2>1$ which is skipped. We claim that if $\rho$ is more log-concave than $\nu=\cN(0, I_d)$, then $\SB{\rho}{\eps}{1}$ is also more log-concave than $\nu$ for small enough $\eps>0$. This is because, using Taylor series expansion around $\eps=0$, we have 
\begin{align*}
    \of{2 - C^\eps_{\eta^2/(1-\eta^2)}}^2 \eta^2 &= \of{2 - \off{\frac{1-\eta^2}{\eta^2} \of{\sqrt{\frac{\eta^4}{(1-\eta^2)^2} + \frac{\eps^2}{4}} - \frac{\eps}{2}}}}^2 \eta^2\\
    &= \eta^2 + (1-\eta^2)\eps - \frac{\eps^2 (1-\eta^2)^2}{8\eta^2} + \cO(\eps^4).
\end{align*}
Therefore, as long as $\eps < \frac{\eta^2}{(1-\eta^2)}$, the SB steps approach monotonically toward the minimizer $\cN(0,I_d)$ and the sign convention remains the same. Again, since both \eqref{sb-step} and \eqref{eq:GA} schemes evolve via linear pushforwards, all steps are mean-zero Gaussian distributed. Denote $\SB{\rho_0}{\eps}{k} = \cN(0, \alpha_k^2 I_d)$ and $\rho_\eps(k) = \cN(0, \beta_k^2 I_d)$ where $\alpha_k^2$ and $\beta_k^2$ evolve via the following recursive relationship
$$\alpha_{k+1}^2 = \of{2 - C^\eps_{\alpha_k^2/(1-\alpha_k^2)}}^2\alpha_k^2 \quad \text{and} \quad \beta_{k+1}^2 = \of{1 + \frac{\eps (1 - \beta_k^2)}{\beta_k^2}}^2\beta_k^2.$$ 
The corresponding surrogate measures $\of{\sigma_{\eps}(k), k\in[N_\eps]}$ are given by $\sigma_{\eps}(k) = \cN\of{0, {\alpha_k^2}/{(1 - \alpha_k^2)} I_d}$. 

We again show that the assumptions of Theorem~\ref{corollary:kl-with-sb} are satisfied to prove that $(\rho_\eps(k), k\in[N_\eps])$ and $(\SB{\rho}{\eps}k{, k\in[N_\eps]})$ are first-order approximation schemes of the Wasserstein gradient flow $(\rho(t), t\in [0,T])$.
Using \eqref{eq:entropic_interpolation}, we have that
\[
I(\sigma_{\eps}(k)) - I((\sigma_{\eps}(k))^\eps_{1/2}) = \of{\frac{1 - \alpha_k^2}{\alpha_k^2}}\frac{d \delta_\eps^2}{\delta_\eps^2 + 4(1 + \delta_\eps)}\,,
\]
where $\delta_\eps = \frac{1}{2}\of{\eps -2 + \sqrt{4 + \eps^2}}$. The sequence $(\alpha_k^2, k\in[N_\eps])$ is monotonically increasing because
\[
\alpha_{k+1}^2 = \of{2- \sqrt{1 + \frac{\eps^2 (\alpha_k^2-1)^2}{4\alpha_k^4}} + \frac{\eps (1 - \alpha_k^2)}{2 \alpha_k^2}}^2 \alpha_k^2\,,
\]
and the prefactor of $\alpha_k^2$ can be easily checked to be greater than $1$. 
Moreover, each $\alpha_k^2$ is bounded from below by its initial value $\eta^2$. Therefore,
\[
\sup_{k\in[N_\eps]} \of{I(\sigma_\eps(k)) - I((\sigma_\eps(k))^\eps_{1/2})} \leq I(\sigma(0)) - I((\sigma(0))^\eps_{1/2}) = \of{\frac{1-\eta^2}{\eta^2}} \frac{d \delta_\eps^2}{\delta_\eps^2 + 4(1+\delta_\eps)}\,.
\]
Since $\delta_\eps = \cO(\eps)$, the RHS converges to zero as $\eps \to 0+$ at the order $\cO(\eps^2)$. 
Therefore, Assumption (1) of Theorem~\ref{corollary:kl-with-sb} is satisfied with rate $\cO(\eps^2)$. 

Now, we show that assumption (2) of Theorem~\ref{corollary:kl-with-sb} is satisfied. Again, in the Gaussian case, it suffices to show that $\of{{\alpha_k^2}/(1-\alpha_k^2), k\in[N_\eps]}$ and $(\beta_k^2, k\in[N_\eps])$ is bounded from below and above. Since $C^\eps_{\nu^2} < 1$ for any $\nu^2>0$, the sequence $(\alpha_k^2, k\in[N_\eps])$, and equivalently $(\alpha_k^2/(1-\alpha_k^2), k\in[N_\eps])$, is monotonically increasing. Then trivially, $(\alpha_k^2/(1-\alpha_k^2), k\in[N_\eps])$ is lower bounded by $\eta^2/(1-\eta^2)$ and $(\beta_k^2, k\in[N_\eps])$ are lower bounded by $\eta^2$. Take $\eps < \min\of{\frac{\eta^2}{1-\eta^2}, \frac{1}{3}}$. Using the identity $1-x\leq \sqrt{1 + x}$ for all $x\geq 0$, we have that $2 - C^\eps_{\alpha_k^2/(1-\alpha_k^2)} \leq 1 + \eps(1-\alpha_k^2)/\alpha_k^2$. Therefore, $\alpha_{k+1}^2 \leq \of{1 + \eps(1-\alpha_k^2)/\alpha_k^2}^2\alpha_k^2$ and $\beta_{k+1}^2 = \of{1 + \eps(1-\beta_k^2)/\beta_k^2}^2\beta_k^2$ give recursive inequalities of similar form. Since $\eps \leq \alpha_k^2/(1-\alpha_k^2)$, we have $\alpha_{k+1}^2 \leq \alpha_k^2 + 3\eps (1-\alpha_k^2)$. Recursively, $\alpha_{N_\eps}^2 \leq (1-3\eps)^{N_\eps}\eta^2 + {1 - (1-3\eps)^{N_\eps}}$, which is bounded from above. Therefore, $\of{\frac{\alpha_k^2}{(1-\alpha_k^2)}, k\in[N_\eps]}$ is bounded from above by $\frac{\eta^2}{1-\eta^2} + \frac{1 - (1-3\eps)^{N_\eps}}{(1-3\eps)^{N_\eps}(1-\eta^2)}$ and similarly, $(\beta_k^2, k\in[N_\eps])$ is bounded from above by $(1-3\eps)^{N_\eps}\eta^2 + \of{1 - (1-3\eps)^{N_\eps}}$, which is bounded as $\eps 
\downarrow 0$. Consequently, the assumptions of Theorem~\ref{corollary:kl-with-sb} are satisfied. Step 1 of the proof gives uniform convergence of \eqref{eq:GA} scheme and step 2 gives the uniform convergence of the \eqref{sb-step} scheme.

\subsection{Time reversal of gradient flows}\label{sec:example_reverse_flows}
Let us consider the time reversal of gradient flows of the functionals in the previous two examples. Specifically, if $\of{\rho(t), t\in[0,T]}$ is the gradient flow of a functional $\cF: \cP_2(\R^d) \to (-\infty, \infty]$ with velocity field $(v(t), t\in[0,T])$, then let $(\bar \rho(t), t\in[0,T])$ denote the time reversed flow defined as $\bar\rho(t) := \rho(T-t)$.
Naturally, the velocity field in the reverse process is negative of the velocity in the forward process, i.e. if $\of{\bar v(t), t\in[0,T]}$ denotes the velocity field of $\of{\bar\rho(t), t\in[0,T]}$, then $\bar{v}(t) = -v(T-t)$.

These absolutely continuous curves are not gradient flows and therefore do not benefit from the theoretical guarantees established in our theorems. Note that the corresponding PDEs, such as the backward heat equation, are well known to be ill-posed (\cite[page 129, eqn. 4.29]{olver2013introduction}). However, interestingly, we observe that, for the time reversed gradient flows of entropy and KL divergence functionals with Gaussian marginals, the \eqref{sb-step} scheme approximates these curves with the same convergence rates. For both the examples below, these convergence rates are derived by directly showing that $\SB{\cdot}{\eps}{1}$ is consistent \eqref{eq:consistency} and a contraction (a stronger version of contraction in limit \eqref{eq:contraction}). Following that, we use Theorem~\ref{thm:S_uniform_convergence} to prove that the \eqref{sb-step} iterates give a first-order approximation of the time reversed flow $\of{\bar\rho(t), t\in[0,T]}$. Through the examples, we also highlight a key issue with approximating reversed time gradient flows: to have a valid first-order approximation (Definition~\ref{def:first_order_approximation}) of $\of{\bar\rho(t), t\in[0,T]}$, the step size $\eps$ should be small enough where the bound on $\eps$ depends on the properties of the iterates encountered in the future. We include these calculations below for readers' curiosity and their practical importance in score-based generative modeling using diffusions  \cite{song2020score, lim2024score}. 



First, we consider the time reversed gradient flow of $\cF = \frac{1}{2} \Ent$. Then, $\bar{v}(t) = \frac{1}{2}\nabla \log \rho(T-t) = \frac{1}{2}\nabla \log \bar{\rho}(t)$. Now we calculate the first \eqref{sb-step} step. For $\rho = \rho(0)$, the surrogate measure is $\sigma = \sigma(0) = e^{\theta \log \bar\rho}$ where $\theta$ is chosen to ensure integrability of $e^{\theta \log \bar \rho}$. Therefore, we may choose $\theta = 1$, which implies $\sigma_\eps(0) = \sigma = \rho$ and the \eqref{sb-step} step is $\SB{\rho}{\eps}{1} = \of{\BPbase{{\rho}}{\eps}}_{\#}\rho$. Notice that the sign of $\theta$ is flipped compared to Section~\ref{sec:example_entropy}. This change ensures that the \eqref{sb-step} for the reverse flow is the opposite direction to the \eqref{sb-step} step of the forward flow. This is interesting as we can approximate both forward and reverse time gradient flow of entropy functional using appropriate direction of \eqref{sb-step} steps, calculated only using the current measure.  

Now for an initial $\rho(0) = \cN(0, \eta^2 I_d)$, we know that the gradient flow $(\rho(t), t\in[0,T])$ is given by $\rho(t) = \cN(0, (\eta^2 + t)I_d)$. Then, we have $\rho = \bar \rho(0) = \cN(0, (\eta^2 + T) I_d)$ and $\bar\rho(\eps) = \cN(0, (\eta^2 + T - \eps)I_d)$. Now we show that $\SB{\rho}{\eps}{1}$ is an $\cO(\eps^2)$ close approximation of $\bar\rho(\eps)$. For any $\eta^2>0$ and $\eps < \eta^2 + T$,
\begin{align*}
    \Was{2}\of{\bar\rho(\eps), \SB{\rho}{\eps}{1}} &= d\abs{C^\eps_{\eta^2 + T} \sqrt{\eta^2 + T} - \sqrt{\eta^2 + T - \eps}} \\
    &= d\sqrt{\eta^2 +T}\abs{\sqrt{1 + \frac{\eps^2}{4(\eta^2+T)^2}} - \frac{\eps}{2(\eta^2 + T)} - \sqrt{1 - \frac{\eps}{\eta^2 +T}}}\,.
\end{align*}
Denote $x = \eps/(\eta^2+T)$, then the term containing $\eps$ above is $c_\eps = \sqrt{1 + x^2/4} - (x/2) - \sqrt{1-x}$. Using the identity $\sqrt{a} - \sqrt{b} = (a-b)/(\sqrt{a}+\sqrt{b})$ with $a = 1 + x^2/4$ and $b = \of{(x/2) + \sqrt{1-x}}^2$, we can be convinced that constant $c_\eps > 0$. Further, using the identity $\sqrt{1+a} < 1 + a/2$ for $a>0$ and $\sqrt{1-b} > 1-b/2$ for $0<b<1$, we have that $c_\eps < x^2/4 = \frac{\eps^2}{4(\eta^2 + T)^2}$. Therefore, we have the consistency \eqref{eq:consistency} result 
\begin{equation}\label{eq:reverse_entropy_consistency}
    \Was{2}\of{\bar\rho(\eps), \SB{\rho}{\eps}{1}} < \frac{d\eps^2}{4(\eta^2 + T)^{3/2}}\,.
\end{equation}

Again, since \eqref{sb-step} scheme evolves via linear pushforwards, the sequence of \eqref{sb-step} steps are $\SB{\rho}{\eps}{k} = \cN(0, \alpha_k^2 I_d)$ where $\alpha_k^2$ follows the recursive relationship $\alpha_{k+1}^2 = \of{C^\eps_{\alpha_k^2}}^2 \alpha_k^2$. Define the piecewise constant interpolation 
\[
\SB{t}{\eps}{} = \SB{\rho}{\eps}{k} \quad t\in[k\eps, (k+1)\eps)\,.
\]
Unlike the previous two examples, since $\of{\bar\rho(t), t\in[0,T]}$ is not a gradient flow, we do not invoke Theorem~\ref{thm:SB_uniform_convergence} to prove the uniform convergence of SB scheme. 
We instead directly show that $\of{\SB{t}{\eps}{}, t\in[0,T]}$ is a first-order approximation (Definition~\ref{def:first_order_approximation}) of $\of{\bar{\rho}(t), t\in[0,T]}$. We have already shown consistency in \eqref{eq:reverse_entropy_consistency}.
Now we show that $\SB{\cdot}{\eps}{1}$, restricted to Gaussian measures, is Lipschitz with the Lipschitz constant of the order $(1 + \cO(\eps))$.
Take $\rho_1 = \cN(0, \sigma_1^2 I_d)$ and $\rho_2 = \cN(0, \sigma_2^2 I_d)$. Then, $\Was{2}\of{\SB{\rho_1}{\eps}{1}, \SB{\rho_2}{\eps}{1}}$ can be bounded above by the following string of inequalities
\begin{align*}
     &= d\abs{\sigma_1\sqrt{1 + \frac{\eps^2}{4\sigma_1^4}} - \frac{\eps}{2\sigma_1} - \sigma_2\sqrt{1 + \frac{\eps^2}{4\sigma_2^4}} + \frac{\eps}{2\sigma_2}} &\\
    &\leq \frac{d\eps}{2\sigma_1 \sigma_2}\abs{\sigma_1 - \sigma_2}+ d\abs{\sqrt{\sigma_1^2 + \frac{\eps^2 }{4\sigma_1^2}} - \sqrt{\sigma_2^2 + \frac{\eps^2 }{4\sigma_2^2}}} & \of{\text{ triangle inequality}}\\
    &= \frac{d\eps}{2\sigma_1 \sigma_2}\abs{\sigma_1 - \sigma_2}+ d \frac{\abs{\sigma_1^2 - \sigma_2^2} \abs{1 - \frac{\eps^2}{4\sigma_1^2 \sigma_2^2}}}{{\sqrt{\sigma_1^2 + \frac{\eps^2 }{4\sigma_1^2}} + \sqrt{\sigma_2^2 + \frac{\eps^2 }{4\sigma_2^2}}}} & \of{\text{$\sqrt{a} -\sqrt{b} = \frac{a-b}{\sqrt{a} + \sqrt{b}}$}}\\
    &\leq \frac{d\eps}{2\sigma_1 \sigma_2}\abs{\sigma_1 - \sigma_2} + d \abs{1 - \frac{\eps^2}{4\sigma_1^2 \sigma_2^2}}\abs{\sigma_1 - \sigma_2} & \of{\text{$\sqrt{\sigma_i^2 + \eps^2/4\sigma_i^2} > \sigma_i$ for $i\in\{1,2\}$}}\,.
\end{align*}
For $\eps < 2\sigma_1 \sigma_2$, $\of{1 - \frac{\eps^2}{4\sigma_1^2 \sigma_2^2}} >0$ and hence we have the contraction
\begin{equation}\label{eq:reverse_entropy_contraction}
    \Was{2}\of{\SB{\rho_1}{\eps}{1}, \SB{\rho_2}{\eps}{1}} < \of{1 + \frac{\eps}{2\sigma_1 \sigma_2}}\Was{2}(\rho_1, \rho_2)
\end{equation}

The sequence $\of{\alpha_k^2, k\in[N_\eps]}$ decreases monotonically because
\[
\alpha_{k}^2 =  \of{C^\eps_{\alpha_{k-1}^2}}^2 \alpha_{k-1}^2 =  \of{\sqrt{1 + \frac{\eps^2}{4\alpha_{k-1}^4}} - \frac{\eps}{2\alpha_{k-1}^2}}^2\alpha_{k-1}^2\,.
\]
and $\sqrt{1+x^2}-x<1$ for all $x>0$. Now we show that $\of{\alpha_k^2, k\in[N_\eps]}$ is bounded from below because for small enough $\eps$. Using $\sqrt{1+x} \leq 1 + x/2$,
\begin{align*}
    \alpha_{k}^2 &= \alpha_{k-1}^2 - \eps \sqrt{1 + \frac{\eps^2}{4\alpha_{k-1}^4}} + \frac{\eps^2}{2\alpha_{k-1}^2} \geq \alpha_{k-1}^2 - \eps + \frac{\eps^2}{2\alpha_{k-1}^2}\of{1 - \frac{\eps}{4\alpha_{k-1}^2}}\,.
\end{align*}
If $\eps$ is uniformly less then $4\alpha_{k-1}^2$ for all $k\in[N_\eps]$, then $\alpha_k^2 \geq \alpha_{k-1}^2 - \eps$. Consequently, the lowest value is $\alpha_{N_\eps}^2 \geq (\eta^2 + T) - \eps N_\eps = \eta^2$. Therefore, choosing $\eps < \eta^2$, we get that $\of{\alpha_k^2, k\in[N_\eps]}$ is bounded from below by $\eta^2$. This highlights an inherent issue with approximating time-reversed gradient flows: a valid approximation depends on the choice of \(\epsilon\), which in turn depends on the variance of the flow at a future time. As a result, if we run an SB scheme with \(\epsilon\) greater than the variance of \(\bar{\rho}(T)\), we might encounter the problem of an SB step with a negative variance.

By the triangle inequality
\begin{align*}
    \Was{2}\of{\bar\rho((k+1)\eps), \SB{\rho}{\eps}{k+1}} &\leq \Was{2}\of{\bar\rho((k+1)\eps), \SB{\bar\rho(k\eps)}{\eps}{1}} + \Was{2}\of{\SB{\bar\rho(k\eps)}{\eps}{1}, \SB{\rho}{\eps}{k+1}}\,.
\end{align*}
Using consistency \eqref{eq:reverse_entropy_consistency}, 
$$\Was{2}\of{\bar\rho((k+1)\eps), \SB{\bar\rho(k\eps)}{\eps}{1}} < \frac{d \eps^2}{4(\eta^2 + T - k\eps)^{3/2} }\,,$$ 
and using Lipschitzness \eqref{eq:reverse_entropy_contraction},
\[
\Was{2}\of{\SB{\bar\rho(k\eps)}{\eps}{1}, \SB{\rho}{\eps}{k+1}} < \of{1 + \frac{\eps}{2(\eta^2 + T - k\eps)\alpha_{k}}} \Was{2}\of{\bar\rho(k\eps), \SB{\rho}{\eps}{k}}\,.
\]
Since for all $k\in[N_\eps]$, $\alpha_k^2 \geq \eta^2$, we have
\[
\Was{2}\of{\bar\rho((k+1)\eps), \SB{\rho}{\eps}{k+1}} < \frac{d\eps^2}{4\eta^3} + \of{1 + \frac{\eps}{2\eta^2}}\Was{2}\of{\bar\rho(k\eps), \SB{\rho}{\eps}{k}}\,.
\]
Recursively, 
$$\Was{2}\of{\bar\rho(k\eps), \SB{\rho}{\eps}{k}} \leq \frac{d\eps}{2\eta} \of{1 + \frac{\eps}{2\eta^2}}^{N_\eps} \,.$$ 
Since $N_\eps = \lfloor T\eps^{-1} \rfloor$, therefore, $\lim_{\eps \to 0} \sup_{k\in[N_\eps]} \Was{2}\of{\bar\rho(k\eps), \SB{\rho}{\eps}{k}} = 0$, proving the uniform convergence of $\of{\SB{\rho}{\eps}{k}, k\in[N_\eps]}$ to the curve $\of{\bar \rho(t), t\in[0,T]}$ as $\eps\to 0+$.


Finally, we consider the case of time reversed case of gradient flow of $\cF = \frac{1}{2}\kl{\cdot}{\nu}$ with $\nu = e^{-g} = \cN(0, I_d)$. Since this case is very similar to the previous examples on the gradient flow of KL divergence and time reversed gradient flow of entropy, we limit explicit calculations. The velocity field $\bar v(t) = \frac{1}{2}\of{\Id + \nabla \log \bar\rho(t)}$. For an initial choice $\rho(0) = \cN(0, \eta^2)$, the time reversed gradient flow, starting from time $t=T$, is $\bar\rho(t) = \cN\of{0, \of{\eta^2 e^{-(T-t)} + (1-e^{-(T-t)})} I_d}$. For brevity of notation, let $\rho = \bar\rho(0) = \cN(0, \tau^2)$ where $\tau^2 := \eta^2 e^{-T} + (1 - e^{-T})$. Now we will calculate the \eqref{sb-step} step. Again, we choose $\theta$ such that the surrogate measure $\sigma = \sigma(0) = \of{\rho e^g}^{\theta}$ is integrable. Using the same logic as Section~\ref{sec:example_kl}, if $\eta^2 > 1$ (equivalently $\tau^2>1$), then $\theta$ may be taken to be $-1$ giving $\sigma = \rho^{-1} e^{-g} = \cN\of{0, (\tau^2/(\tau^2-1))I_d}$ and $\SB{\rho}{\eps}{1} = \of{2\Id - \BPbase{\sigma}{\eps}}_\#\rho$. Whereas if $\eta^2 < 1$ (equivalently $\tau^2 < 1$), then $\theta$ can be taken to be $1$ giving $\sigma = \cN(0, (\tau^2/(1- \tau^2)) I_d)$ and $\SB{\rho}{\eps}{1} = \of{\BPbase{\sigma}{\eps}}_\#\rho$. We will present calculations for $\eta^2<1$ case. Similar calculations follow for $\eta^2>1$ and have been skipped. 

 Now we prove that the \eqref{sb-step}  step is an $\cO(\eps^2)$ approximation of $\bar\rho(\eps)$. Note that for $\tau^2< 1$ and $\eps < \log \left((1-\tau^2)^{-1}\right)$, $\SB{\rho}{\eps}{1} = \cN\of{0, \of{C^\eps_{\tau^2/(1-\tau^2)}}^2 \tau^2}$ and $\bar\rho(\eps) = \cN\of{0, \tau^2 + (\tau^2 -1)(e^\eps -1)}$. Then
 \begin{align*}
     \Was{2}\of{\bar\rho(\eps), \SB{\rho}{\eps}{1}} &= d \tau \abs{ \sqrt{1 + \frac{\eps^2 (1-\tau^2)^2}{4\tau^4}} - \frac{\eps (1-\tau^2)}{2\tau^2} -  \sqrt{1 - \frac{1-\tau^2}{\tau^2}(e^\eps-1)}}.
 \end{align*}
 Denote $x = \frac{\eps(1-\tau^2)}{\tau^2}$, then the term containing $\eps$ above is $c_\eps = \sqrt{1 + \frac{x^2}{4}} - \frac{x}{2} - \sqrt{1 - x\of{\frac{e^\eps - 1}{\eps}}}$. 
 It is easy to see that $c_\eps > 0$. Further, using the identity $\sqrt{1+a} < 1 + a/2$ for $a>0$ and $\sqrt{1-b} > 1-b/2$ for $0<b<1$, we have that $c_\eps < \frac{x}{2}\of{\frac{e^\eps-1}{\eps}} + \frac{x^2}{8}$. Therefore, we have the consistency \eqref{eq:consistency} result
 \begin{equation}\label{eq:reverse_kl_consistency}
     \Was{2}\of{\bar\rho(\eps), \SB{\rho}{\eps}{1}} < d\eps^2\of{\frac{(1-\tau^2)^2}{8\tau^3} + \frac{C(1-\tau^2)}{2\tau}} \quad \text{for some constant }C.
 \end{equation}

 Again since the \eqref{sb-step} scheme evolves via linear pushforwards, $\SB{\rho}{\eps}{k}$ is a Gaussian measure for all $k\in[N_\eps]$ and let $\SB{\rho}{\eps}{k} = \cN(0, \alpha_k^2 I_d)$. Then $\alpha_k^2$ evolves via the recursive relationship $\alpha_{k+1}^2 = \of{C^\eps_{\alpha_k^2/(1-\alpha_k^2 )}}^2 \alpha_k^2$. We prove uniform convergence of the SB scheme in the same way as the time reversed flow of entropy functional. The consistency is show above and now we show that the \eqref{sb-step} is a contraction for Gaussian measures. Take $\rho_1 = \cN(0, \sigma_1^2 I_d)$ and $\rho_2 = \cN(0, \sigma_2^2 I_d)$. Then, exactly as in the case of time-reversed gradient flow of entropy, 
\begin{align*}
 \Was{2}\of{\SB{\rho_1}{\eps}{1}, \SB{\rho_2}{\eps}{1}}   &< \frac{d\eps}{2}\of{1 + \frac{1}{\sigma_1\sigma_2}} \abs{\sigma_1 -\sigma_2} +  d \abs{1 - \frac{\eps^2}{4} \of{\frac{1-\sigma_1^2\sigma_2^2}{\sigma_1^2\sigma_2^2}}}  \abs{\sigma_1 -\sigma_2}. 
\end{align*}
For $\eps < \frac{2\sigma_1 \sigma_2}{\sqrt{1 - \sigma_1^2 \sigma_2^2}}$, $\of{1 - \frac{\eps^2}{4} \of{\frac{1 - \sigma_1^2 \sigma_2^2}{\sigma_1^2 \sigma_2^2}}}>0$ and hence we have the contraction
\begin{equation}\label{eq:reverse_kl_contraction}
    \Was{2}\of{\SB{\rho_1}{\eps}{1}, \SB{\rho_2}{\eps}{1}} < \of{1 + \frac{\eps}{2}\of{1 + \frac{1}{\sigma_1 \sigma_2}}}\Was{2}\of{\rho_1, \rho_2}\,.
\end{equation}

Because $C^\eps_{\eta^2} \leq 1$ for all $\eta^2>0$, the sequence $\of{\alpha_k^2; k\in[N_\eps]}$ is monotonically decreasing and bounded from above by its initial value $\tau^2$. 
Similar to the reversed time gradient flow of entropy, we get that the variance $(\alpha_k^2; k\in[N_\eps])$ is bounded from below if $\eps$ is less than $\frac{4\eta^2}{1-\eta^2}$, which depends on the variance at time $t=T$.
Using the triangular argument from Theorem~\ref{thm:S_uniform_convergence}, for any $k\in[N_\eps]$, 
$$\Was{2}\of{\bar\rho(k\eps), \SB{\rho}{\eps}{k}} \leq d\eps\frac{(1 + 4\eta^2 C)}{4\eta(\eta^2+1)} \of{1 + \frac{\of{1 + \eta^{-2}}\eps}{2}}^{N_\eps} \,.$$ 
Therefore, $\lim_{\eps \to 0} \sup_{k\in[N_\eps]} \Was{2}\of{\bar\rho(k\eps), \SB{\rho}{\eps}{k}} = 0$, proving the uniform convergence of $\of{\SB{\rho}{\eps}{k}; k\in[N_\eps]}$ to the curve $\of{\bar \rho(t); t\in[0,T]}$ as $\eps\to0+$.

\bibliographystyle{alpha}
\bibliography{sample}

\section*{Notations}

\begin{table}[!ht]
\centering
\begin{tabular}{ c  c } 
 \hline
 \textbf{Symbol} & \textbf{Explanation}  \\ [0.5ex] 
 \hline
 $C^{d}[0,\infty)$ & Collection of continuous paths $\omega: [0,\infty) \to \mathbb{R}^{d}$ \\
 \hline
 $C^{k}(\mathbb{R}^{d})$ & $k$-times continuously differentiable functions $\mathbb{R}^{d} \to \mathbb{R}$ \\
  \hline
 $\cP_2(\R^d)$ & Space of twice integrable probability measures on $\R^d$\\
  \hline
 $\cP_2^{ac}(\R^d)$ & \makecell{Space of twice integrable and absolutely continuous probability \\ measures (with respect to Lebesgue measure) on $\R^d$}\\
  \hline
  $C_c^\infty(\R^d)$ & Space of smooth and compactly supported functions in $\R^d$\\
   \hline
   $\Was{2}(\mu, \nu)$ & Wasserstein-2 distance between measures $\mu$ and $\nu$ in $\cP_2(\R^d)$\\
   \hline
   $T_\mu^\nu$ & Optimal transport map from $\mu$ to $\nu$ (if it exists)\\
   \hline
 $(\pi_t, t \geq 0)$ & Coordinatewise projections on $C^{d}[0,\infty)$, $\pi_s(\omega) = \omega_s$ \\
  \hline
 $\BMstatic{\rho}{\eps}$ & \makecell{$\BMstatic{\rho}{\eps}(dx,dy) = (2\pi\eps)^{-d/2}\rho(x)\exp\left(-\|x-y\|^2/2\eps\right)dxdy$} \\
  \hline
 $(W_x, x \in \mathbb{R}^{d})$ & Wiener measure on $C^{d}[0,\infty)$ with initial condition $\delta_x$ \\
  \hline
 $W$ & Reversible Wiener measure on $C^{d}[0,\infty)$ \\
  \hline
  $R_{\eps}$ & Law of reversible Brownian motion with diffusion $\eps$ \\ \hline
 $(P_t, t \geq 0)$ & Brownian semigroup \\
  \hline
 $(p_{t}(\cdot,\cdot), t > 0)$ & Brownian transition densities \\
  \hline
 $Q$ & Law of stationary Langevin diffusion on $C^{d}[0,\infty)$ \\
  \hline
 $\LDstatic{\rho}{\eps}$ & Joint law of Langevin diffusion at time 
$0$ and $\eps$, i.e.\ $(\pi_0,\pi_{\eps})_{\#}Q$\\
  \hline
 $(q_{t}(\cdot,\cdot), t > 0)$ & Transition densities for Langevin diffusion \\
  \hline
 $L$ & Extended generator for the Langevin diffusion \\
  \hline
 $(G_t, t \geq 0)$ & Langevin semigroup \\
  \hline
 $\SBstatic{\rho}{\eps}$ & $\eps$-static $\Schro$ bridge with marginals equal to $\rho$ \\
  \hline
  $(f_\eps,\eps > 0)$ & Entropic potentials associated to $(\SBstatic{\rho}{\eps},\eps > 0)$ \\ \hline
  $\BPbase{\rho}{\eps}$ & Barycentric projection associated with $\SBstatic{\rho}{\eps}$, $\BPbase{\rho}{\eps}(x) = \Exp{\SBstatic{\rho}{\eps}}[Y|X=x]$\\ \hline
 $\SBdynam{\rho}{\eps}$ & \makecell{Law of $\eps$-dynamic $\Schro$ bridge on $C^{d}[0,1]$\\ with initial and terminal distribution $\rho$} \\
  \hline
 $(\rho_t^{\eps}, t \in [0,1])$ & Entropic interpolation from $\rho$ to itself, $\rho_{t}^{\eps} = (\pi_t)_{\#}\SBdynam{\rho}{\eps}$ \\ \hline
 $\|\cdot\|_{\psi_{1}}$ & Subexponential norm of a random variable, \cite[Definition 2.7.5]{vershynin-hdp}\\ \hline
 $\text{Tan}(\mu)$ & Tangent bundle of $\cP_2(\R^d)$ at $\mu$\\
\hline
 $\Ent(\mu)$ & Entropy of $\mu\in\cP_2^{ac}(\R^d)$\\
 \hline
 $H(\mu|\nu)$& Kullback-Leibler divergence between $\mu$ and $\nu$\\
 \hline
 $I(\rho)$ & Fisher information of $\rho$, $I(\rho) = \Exp{\rho}\|\nabla \log \rho\|^{2}$ for $\rho \in \cP_{2}^{ac}(\mathbb{R}^{d})$ \\ \hline
 $\SB{\mu_0}{\eps}{k}$ & $k$th iterate of Schrödinger bridge scheme starting from $\mu_0$ with stepsize $\eps$\\
 \hline
$\opt{\eps}{k}{\mu_0}$ & \makecell{$k$th iterate of pushforward of form $(\Id + \eps v_{\eps})_{\#}\mu_{0}$, where \\ each $v_{\eps}: \mathbb{R}^{d} \to \mathbb{R}^{d}$ is a vector field specified in context} \\
\hline
\end{tabular}

\label{table:notation-sumary}
\end{table}




\end{document}